\numberwithin{equation}{section}
\newtheorem{thm}{Theorem}[section]
\newtheorem{lma}[thm]{Lemma}
\newtheorem{cor}[thm]{Corollary}
\newtheorem{defn}[thm]{Definition}
\newtheorem{prop}[thm]{Proposition}
\newtheorem{ques}[thm]{Question}
\renewcommand{\geq}{\geqslant}
\renewcommand{\leq}{\leqslant}
\renewcommand{\H}{\text{H}}
\renewcommand{\P}{\text{P}}
\DeclareMathOperator{\R}{\mathbb{R}}
\DeclareMathOperator{\Q}{\mathbb{Q}}
\DeclareMathOperator{\N}{\mathbb{N}}
\DeclareMathOperator{\Slice}{Slice}
\DeclareMathOperator{\Fix}{Fix}
\title{The Assouad dimension of randomly generated fractals}
\author{Jonathan M. Fraser$^1$ \and Jun Jie Miao$^2$ \and Sascha Troscheit$^3$}
\begin{document}

\maketitle

\begin{center}

\emph{Dedicated to Kenneth Falconer on the occasion of his 60th birthday.}
\\ \vspace{9mm}

	$^1$School of Mathematics, The University of Manchester, Manchester, M13 9PL, UK.\\
	E-mail: jonathan.fraser@manchester.ac.uk
	\\ \vspace{3mm}
	$^2$Department of Mathematics, Shanghai Key Laboratory of PMMP, East China Normal University, Shanghai 200241, P. R. China.
	\\
	E-mail: jjmiao@math.ecnu.edu.cn
	\\ \vspace{3mm}
	$^3$Mathematical Institute, University of St Andrews, North Haugh, St Andrews, Fife, KY16 9SS, UK.
	\\
	E-mail: s.troscheit@st-andrews.ac.uk
	\end{center}

\begin{abstract}
We consider several different models for generating random fractals including random self-similar sets, random self-affine carpets, and fractal percolation. In each setting we compute either the \emph{almost sure} or the \emph{Baire typical} Assouad dimension and consider some illustrative examples.
Our results reveal a common phenomenon in all of our models: the Assouad dimension of a randomly generated fractal is generically as big as possible and does not depend on the measure theoretic or topological structure of the sample space.
This is in stark contrast to the other commonly studied notions of dimension like the Hausdorff or packing dimension. \\

\emph{Mathematics Subject Classification} 2010:  primary: 28A80, 60J80; secondary: 37C45, 54E52, 82B43.

\emph{Key words and phrases}: Assouad dimension, random fractal, self-similar set, self-affine carpet, percolation, Baire category.
\end{abstract}

\section{Introduction}

In this paper we study the generic Assouad dimension for a variety of  different models for generating random fractal sets.  The first model is that of  {\it random iterated function systems}.  This well-studied random construction is based on randomising the classical iterated function system in a natural way.  Since this model will be used in several instances throughout the paper we will describe it in detail in Section \ref{randomIntro}.  Firstly, we seek to study the almost sure Assouad dimension of attractors generated in this way, and to do so we specialise to the self-similar setting, in Section \ref{SSSection}, and the setting of self-affine carpets, in Section \ref{BMSection}.  In particular, these two sections seek the generic dimension from a measure theoretic point of view.  Secondly, in Section \ref{typsection} we consider the same model but seek the generic dimension from a topological point of view, i.e. for a residual subset of the sample space. This approach was initiated by the first author in \cite{Fraser12b} and, as in that paper, we are able to compute the generic dimension in a much more general setting than the measure theoretic approach permits. Our second model is {\it fractal percolation}, often referred to as {\it Mandelbrot percolation}, which is also a famous and well-studied model for randomness.  Our results on fractal percolation will be given in the self-contained Section \ref{percSect}. In particular, conditioned on non-extinction, we compute the almost sure Assouad dimension of fractal percolation as well as the almost sure Assouad dimension of all orthogonal projections of the percolation simultaneously.   A somewhat surprising corollary of our results is that, conditioned on non-extinction, almost surely the fractal percolation cannot be embedded in any lower dimensional Euclidean space, no matter how small the almost sure Hausdorff dimension is.  All proofs will be given in Section \ref{proofs}.  Finally, in Section \ref{questionsection} we discuss our results and pose some questions of particular interest to us.
\\ \\
The key common theme throughout the paper and across the somewhat disparate array of questions is that the Assouad dimension is always generically as large as possible.  In the measure theoretic setting this behaviour is completely different from that observed by other important notions of dimension, such as Hausdorff, packing or box counting, where these dimensions are generically an intermediate value, which can take the form of an appropriately weighted average of deterministic values. In order to put our results into context, our results will be compared throughout the paper to those  showing some kind of `averaging'.  In the topological setting, the generic dimensions of random fractals were shown to be `extremal' in \cite{Fraser12b}: some are generically as small as possible and others are generically as large as possible.  The interesting thing for us is that for the Assouad dimension of random attractors the measure theoretic and topological answers agree. This is also in stark contrast with what is `usually' the case.  A classical example being that Lebesgue almost all real numbers are normal, but a residual set of real numbers are as far away from being normal as possible \cite{Hyde10, Salat66}.

\subsection{The Assouad dimension}

The Assouad dimension is the main object of study in this paper and will be defined and discussed in this section.  It was introduced by the French mathematician Patrice Assouad during his PhD research at Universit\'{e} Paris XI, Orsay in the 1970s \cite{assouadphd, Assouad79}.  Assouad's original motivation was to study embedding problems, a subject where the Assouad dimension is still playing a fundamental r\^ole, see \cite{Olson02, Olson10, Robinson11}.  The concept has also found a home in other areas of mathematics, including the theory of quasi-conformal mappings \cite{Heinonen01, Luukkainen98, mackaytyson}, and more recently it is gaining substantial attention in the literature on fractal geometry \cite{Kaenmaki13, Mackay11, Olsen11a, Fraser14a, Li14, Fraser14, Olson14}.  It is also worth noting that, due to its intimate relationship with tangents, it has always been present, although behind the scenes, in the pioneering work of Furstenberg on micro-sets and the related ergodic theory which goes back to the 1960s, see \cite{Furstenberg08}. The Assouad dimension also plays a r\^ole in the fractional Hardy inequality. If the boundary of a domain in $\R^d$ has Assouad dimension less than or equal to $d-p$, then the domain admits the fractional $p$-Hardy inequality~\cite{Aikawa91, Koskela03, Lehrback13}. The Assouad dimension gives a coarse and heavily localised description of how `thick' a given metric space is on small scales; hence its importance for embedding problems.  Most of the other popular notions of dimension, like the Hausdorff, packing, or box dimension, give much more global information, taking an `average thickness' over the whole set.  As such, exploring and understanding the relationships, similarities, and differences, between the Assouad dimension and the other global dimensions is of high and increasing interest, and is one of the key themes of this paper.
\\ \\
Let $(X,d)$ be a metric space and for any non-empty subset $F \subseteq X$ and $r>0$, and let $N_r (F)$ be the smallest number of sets with diameter less than or equal to $r$ required to cover $F$.  The \emph{Assouad dimension} of a non-empty subset $F$ of $X$, $\dim_\text{A} F$, is defined by
\begin{multline*}
\dim_\text{A} F \ = \  \inf \Bigg\{ \alpha \  : \ \text{     there exists constants $C, \, \rho>0$ such that,} \\
\text{ for all $0<r<R\leq \rho$, we have $\ \sup_{x \in F} \, N_r\big( B(x,R) \cap F \big) \ \leq \ C \bigg(\frac{R}{r}\bigg)^\alpha$ } \Bigg\}.
\end{multline*}
It is worth remarking that some authors do not include the $\rho$ in the above definition, i.e. they allow $r$ and $R$ to be arbitrarily large.  The reason for this is to guarantee invariance of the Assouad dimension under specific types of maps, for example the involution $x \mapsto x/\lvert x \rvert^2$ $(x \in (0,1))$, see~\cite[Theorem A.10 (1)]{Luukkainen98}.  This clearly gives rise to a larger quantity, but for bounded sets $F$ the two notions are equivalent and in this paper, as with most papers on fractal geometry, we only consider bounded sets.  We also note that the Assouad dimension can be defined in a number of slightly different ways, but all leading to the same concept.  For example, the function $N_r(F)$ can be replaced by the maximum size of an $r$-packing of the set $F$, or the minimum number of closed cubes of side length $r$ required to cover $F$. For a review of the other notions of dimension, which we frequently mention in this article, but never use directly, like the Hausdorff, packing or box dimension, see~\cite[Chapters 2-3]{FractalGeo}.

\subsection{Random iterated function systems}\label{randomIntro}

Our results in Sections \ref{SSSection}, \ref{BMSection} and \ref{typsection} use the random iterated function systems model, which we recall and discuss in this section.  This model has been used extensively in the literature and fits in to the more general framework of $V$-variable fractals introduced and discussed in detail in~\cite{superfractals, Barnsley05, Barnsley08, Barnsley12}.  Let $(X, d)$ be a compact metric space.
A (deterministic) iterated function system (IFS) is a finite non-empty set of contraction mappings on $X$.
Given such an IFS, $\{S_1, \dots, S_m\}$, it is well-known that there exists a unique non-empty compact set $F$ satisfying
\[
F= \bigcup_{i=1}^{m} S_{i}(F),
\]
which is called the attractor of the IFS, see~\cite{Hutchinson81,FractalGeo}.  We define a random iterated function system (RIFS) to be a set $\mathbb{I} = \{ \mathbb{I}_1, \dots, \mathbb{I}_N\}$, where each $\mathbb{I}_i$ is a deterministic IFS, $\mathbb{I}_i = \{ S_{i,j} \}_{j \in \mathcal{I}_i}$, for a finite index set, $\mathcal{I}_i$, and each map, $S_{i,j}$, is a contracting self-map on $X$.
We define a continuum of attractors of $\mathbb{I}$ in the following way.
Let $\Lambda= \{1, \dots, N\}$, $\Omega = \Lambda^\mathbb{N}$ and let $\omega = (\omega_1, \omega_2, \dots) \in \Omega$.
The attractor of $\mathbb{I}$ corresponding to $\omega$ is defined to be
\begin{equation}\label{attractoreq}
F_\omega \, = \, \bigcap_{k}\  \bigcup_{i_1\in \mathcal{I}_{\omega_1}, \dots, i_k \in\mathcal{I}_{\omega_k}} S_{\omega_1, i_1} \circ \dots \circ S_{\omega_k,i_k}(X).
\end{equation}
So, by `randomly choosing' $\omega \in \Omega$, we `randomly choose' an attractor $F_\omega$.  We now wish to make statements about the generic nature of $F_\omega$.  In particular, our key question is: ``What is the Assouad dimension of $F_\omega$ for generic $\omega \in \Omega$?''  We note that in this paper we adopt two different definitions of `generic'.  Firstly, we may mean \emph{almost surely with respect to a natural probability measure on} $\Omega$ or, secondly, we may mean \emph{for a residual subset of} $\Omega$.  In Sections \ref{SSSection} and \ref{BMSection} we adopt the first approach, which we describe here.  The second, topological approach, will be discussed in Section \ref{typsection}, the only section where it will be used.
Section \ref{percSect} will concern Mandelbrot percolation so here `generic' will again refer to \emph{almost surely with respect to a natural probability measure}, although the measure in that section is different to the one described here.
\\ \\
We may refer to elements in $\Omega$ as \emph{realisations} and, in general, for symbolic codings refer to finite or infinite sequences as words.
For an alphabet $\mathcal A$, we write $\mathcal A^{\mathbb N}$ and $\mathcal A^{*}$ to denote all infinite and finite sequences with entries in $\mathcal A$, respectively.
Given two (finite or infinite) words $v,w$ we write $v\wedge w$ for the maximum number of common initial entries: $v\wedge w=\max\{k\mid v_i=w_i \text{ for }1\leq i\leq k\}$, where we assume for convenience that $\max \varnothing = 0$.
We write $v|_k$ to denote the finite word $u$ of length $k$ such that $v\wedge u=k$.
We also write $\sigma$ to denote the one-sided left shift on $\Omega$, i.e. $\sigma(w)=\sigma(w_1,w_2,\hdots)=(w_2,w_3,\hdots)$.
We define a metric on our space $\Omega$ by $d(x,y)=2^{-(x\wedge y)}$ for $x,y\in \Omega$ with $x \neq y$ and use this metric to define the topology on $\Omega$.  This topology is also generated by the cylinder sets $\{C_k(w)=\{\omega\in\Omega \mid \omega|_k=w\}\mid w\in\Omega, k\geq 1\}$.
Let $\underline p=\{p_1,p_2,\hdots,p_{|\Lambda|}\}$, with $p_{i}>0$ be a finite probability vector and define a Borel probability measure $\mu$ on $\Omega$ by 
\begin{equation}\label{def_m}
\mu(C_k(w))=p_{w_1}p_{w_2}\hdots p_{w_k}.
\end{equation}

This measure is called a \emph{Bernoulli measure} and will be used to describe (measure theoretically) generic properties of $F_\omega$.  There is a large body of literature concerning this model of randomness, often centred on the question of almost sure dimension.  For example, in the self-similar setting see~\cite{Barnsley12} and in the self-affine carpet setting see~\cite{Gui08, Fraser11}.  For $i \in \Lambda$, we write $\underline{i} = ( i, i, \dots,) \in \Omega$ and note that $F_{\underline{i}}$ is the \emph{deterministic} attractor of the IFS $\mathbb{I}_i$.

\section{The self-similar setting}\label{SSSection}

In this section we restrict ourselves to RIFSs where all the mappings in the IFSs that make up the RIFS are similarity mappings on Euclidean space endowed with the usual metric.
That is, for every $S_{i,j}\in\mathbb{I}_{i}\in\mathbb{I}$ and all $x,y\in X$, we have
\begin{equation}\label{def_ss}
\lvert S_{i,j}(x) - S_{i,j}(y) \rvert=c_{i,j}\lvert x - y \rvert,
\end{equation}
where $0<c_{i,j}<1$ is a constant only depending on $i$ and $j$. Without loss of generality, we may assume $X = [0,1]^d$ for some $d \in \mathbb{N}$.  Deterministic self-similar sets (i.e. attractors of a single IFS consisting of maps of this form) are among the most studied examples of fractals in the literature.  Consider for the time being such a deterministic IFS $\{S_i\}_{i \in \mathcal{I}_0}$ with contraction ratios $\{c_i\}_{i \in \mathcal{I}_0}$.  It follows from standard results that the Hausdorff, packing and box dimensions always coincide for self-similar sets, see~\cite[Chapter 3]{TecFracGeo}. It was unknown until recently whether or not the Assouad dimension also always coincides with the other dimensions, but it was proved in~\cite{Fraser14a} that this was not true by providing an explicit counter example and then a general theory was developed in~\cite{Fraser14}.  The key properties which decide if the Assouad and Hausdorff dimensions coincide are various separation conditions which control how the different pieces of the self-similar set overlap with each other.  We recall these now, as they are relevant for this study. The \emph{open set condition} (OSC) was introduced by Moran in~\cite{Moran46} and has played a fundamental r\^ole in the theory of self-similar sets ever since.
\begin{defn}[OSC]
An IFS $\{S_{i}\}_{i\in \mathcal{I}_{0}}$ satisfies the \emph{open set condition} if there exists a bounded, open, non-empty set $U$ such that
\[
\bigcup_{i\in\mathcal{I}_0}S_{i}(U)\subseteq U
\]
with union disjoint.
\end{defn}
If the OSC is satisfied for a self-similar set, then it follows that the Hausdorff and Assouad dimensions coincide and are given by the solution to the following equation:
$$
\sum_{i\in \mathcal{I}_{0}} c_i^s =1.
$$
This equation is known as the \emph{Hutchinson-Moran formula} and the solution is known as the \emph{similarity dimension}.  Even if the OSC is not satisfied, the similarity dimension still provides an upper bound for the Hausdorff dimension, but not for the Assouad dimension, see~\cite[Chapter 9]{FractalGeo} and~\cite{Fraser14a}.  However, it is not the OSC which determines if the Hausdorff and Assouad dimensions coincide, but rather the weak separation property (WSP) introduced by Lau and Ngai~\cite{Lau99} and Zerner~\cite{Zerner96}.  Let
\[
\mathcal{E}=\{S_{\alpha}^{-1}\circ S_{\beta}\mid \alpha,\beta\in\mathcal{I}_{0}^{*}\text{ with }\alpha\neq \beta\}.
\]
\begin{defn}[WSP]
An IFS $\{S_{i}\}_{i\in \mathcal{I}_{0}}$ satisfies the \emph{weak separation property} if
\[
 I \notin \overline{\mathcal{E} \setminus\{I\}},
\]
where $I$ is the identity map.
\end{defn}

In other words the weak separation property is satisfied if the identity is not an accumulation point of $\mathcal E\setminus\{I\}$, that is there is no sequence of pairs $(\{\alpha_k,\beta_k\})_{k=1}^{\infty}$ such that $S_{\alpha_k}^{-1}\circ S_{\beta_k}\to I$ but $S_{\alpha_k}^{-1}\circ S_{\beta_k}\neq I$ for all $k$. Note that every IFS satisfying the OSC, trivially satisfies the WSP but there are examples of IFSs that satisfy the WSP but not the OSC. We make use of such an example in Section~\ref{counterexample}.  It is worth noting that the OSC can also be defined via the set $\mathcal{E}$.  In particular, combining work of Bandt and Graf~\cite{Bandt92}, and Schief~\cite{Schief94}, one obtains that an IFS of similarities satisfies the OSC if and only if $ I \notin \overline{\mathcal{E}}$.
\\ \\
In order to state some of our results, we are required to use a random analogue of the OSC.
\begin{defn}[UOSC]
We say that $\mathbb{I}$ satisfies the \emph{uniform open set condition (UOSC)}, if each deterministic IFS $\mathbb{I}_{i}$ satisfies the OSC and the open set can be chosen uniformly, i.e., there exists a non-empty open set $U \subseteq X$ such that for each $i \in \Lambda$ we have
\[
\bigcup_{j \in \mathcal{I}_i} S_{i,j}(U) \subseteq U
\]
with the union disjoint.
\end{defn}

This separation condition is natural in the random setting and was used in the papers~\cite{Barnsley05, Barnsley12, Fraser12b}, for example.

\subsection{Our results on random self-similar sets}

First we obtain a \emph{sure} upper bound, i.e. an upper bound which holds for \emph{all} realisations.

\begin{thm}\label{theo2-1}
Let $\mathbb{I}$ be a RIFS consisting of IFSs of similarities as in $(\ref{def_ss})$.  Assume that $\mathbb{I}$ satisfies the UOSC.  Then, for all $\omega \in \Omega$, we have
\[
\dim_{\text{\emph{A}}}F_{\omega} \ \leq \ \max_{i\in \Lambda} \, \dim_{\text{\emph{A}}}F_{\underline i}.
\]
\end{thm}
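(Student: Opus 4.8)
The plan is to exploit the self-referential structure of the construction $(\ref{attractoreq})$: every sufficiently small piece of $F_\omega$ is a similarity copy of $F_{\sigma^k\omega}$ for some $k$, the UOSC controls how many such pieces of a given size can meet a fixed ball, and a \emph{realisation-independent} bound on the covering numbers $N_\delta(F_\tau)$, $\tau\in\Omega$, does the rest. Throughout, $c_{\min}=\min_{i,j}c_{i,j}\in(0,1)$, and for a word $v=(i_1,\dots,i_k)$ compatible with $\omega$ (i.e. $i_l\in\mathcal I_{\omega_l}$) I write $S_{\omega,v}=S_{\omega_1,i_1}\circ\cdots\circ S_{\omega_k,i_k}$, a similarity of ratio $c_{\omega,v}=c_{\omega_1,i_1}\cdots c_{\omega_k,i_k}$.

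First I would fix $s'>s:=\max_{i\in\Lambda}\dim_{\mathrm A}F_{\underline i}$. Since the UOSC forces each $\mathbb I_i$ to satisfy the OSC, Hutchinson's theorem gives $\dim_{\mathrm H}F_{\underline i}=s_i$, the similarity dimension of $\mathbb I_i$, and as $\dim_{\mathrm H}\le\dim_{\mathrm A}$ we get $s_i\le s<s'$ for every $i$; hence $\theta:=\max_{i\in\Lambda}\sum_{j\in\mathcal I_i}c_{i,j}^{s'}<1$. For $0<\delta\le1$ let $\mathcal W(\delta,\tau)$ denote the set of shortest $\tau$-compatible words $v$ with $c_{\tau,v}\le\delta$, a finite maximal antichain with $c_{\tau,v}>c_{\min}\delta$, for which $F_\tau=\bigcup_{v\in\mathcal W(\delta,\tau)}S_{\tau,v}(F_{\sigma^{|v|}\tau})$ and every cylinder $S_{\tau,v}(X)$ has diameter at most $\delta\cdot\mathrm{diam}\,X$. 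A short induction on levels — the total $s'$-mass of the children of an unstopped word is at most $\theta$ times the word's own $s'$-mass — gives $\sum_{v\in\mathcal W(\delta,\tau)}c_{\tau,v}^{s'}\le1$; since each summand exceeds $(c_{\min}\delta)^{s'}$ this forces $|\mathcal W(\delta,\tau)|<c_{\min}^{-s'}\delta^{-s'}$, and therefore $N_\eta(F_\tau)\lesssim\eta^{-s'}$ with an implied constant depending only on $c_{\min}$ and $\mathrm{diam}\,X$, \emph{uniformly in} $\tau\in\Omega$.

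Next, given $\omega\in\Omega$, $x\in F_\omega$ and $0<r<R\le\mathrm{diam}\,X$, I would decompose $F_\omega=\bigcup_{v\in\mathcal W(R',\omega)}S_{\omega,v}(F_{\sigma^{|v|}\omega})$ with $R'=R/\mathrm{diam}\,X$, so that each cylinder $S_{\omega,v}(X)$ has diameter in $(c_{\min}R,R]$ and ratio $c_{\omega,v}\le R'$. The standard OSC volume argument — the sets $S_{\omega,v}(U)$ for $v\in\mathcal W(R',\omega)$ are pairwise disjoint because $\mathcal W(R',\omega)$ is an antichain and $U$ is uniform, each contains a ball of radius comparable to $R$ and lies in a ball of radius comparable to $R$ — shows that the number of $v\in\mathcal W(R',\omega)$ with $S_{\omega,v}(X)\cap B(x,R)\ne\varnothing$ is at most a constant $K$ depending only on $U$, $c_{\min}$ and the ambient dimension, and not on $\omega$ or $R$. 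For each such $v$, $S_{\omega,v}(F_{\sigma^{|v|}\omega})$ is a similarity copy of $F_{\sigma^{|v|}\omega}$ of ratio $c_{\omega,v}\le R$, so $N_r(S_{\omega,v}(F_{\sigma^{|v|}\omega}))=N_{r/c_{\omega,v}}(F_{\sigma^{|v|}\omega})\le N_{r/R}(F_{\sigma^{|v|}\omega})\lesssim(R/r)^{s'}$ by the previous step. Summing over the at most $K$ relevant cylinders gives $N_r(B(x,R)\cap F_\omega)\lesssim K(R/r)^{s'}$ for all such $r<R$, hence $\dim_{\mathrm A}F_\omega\le s'$; letting $s'\downarrow s$ then finishes the argument.

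The part I expect to be the main obstacle is the uniformity over realisations: a priori the covering numbers (equivalently, the upper box dimension) of $F_\tau$ depend on $\tau$, so no single deterministic estimate can be invoked. What makes it work is that taking $s'>\max_i s_i$ renders every one-step branching sum $\sum_{j\in\mathcal I_i}c_{i,j}^{s'}$ strictly smaller than $1$, which is precisely what yields the realisation-free mass bound $\sum_{v\in\mathcal W}c_{\tau,v}^{s'}\le1$ and hence the uniform covering estimate; once that is in place, the OSC counting step is routine.
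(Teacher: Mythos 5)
Your proposal is correct and follows essentially the same route as the paper's proof: a stopping-time decomposition of $F_\omega$ into cylinders of diameter comparable to $R$, a volume/disjointness argument from the UOSC bounding the number of such cylinders meeting $B(x,R)$ by an absolute constant, and the Hutchinson--Moran mass inequality $\sum_{j\in\mathcal I_i}c_{i,j}^{s}\le 1$ giving a realisation-uniform bound on the number of subcylinders at relative scale $r/R$ (the paper's Lemmas on $\Gamma_\omega(r)$ are exactly your antichain counts, applied at exponent $s$ itself rather than $s'>s$). The only differences are cosmetic: you phrase the second lemma as a uniform covering estimate $N_\eta(F_\tau)\lesssim\eta^{-s'}$ and take a limit $s'\downarrow s$, whereas the paper works directly with $s$ and with the shifted stopping sets $\Gamma_{\sigma^{|\mathbf x|}(\omega)}(r/R)$.
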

The proof of Theorem \ref{theo2-1} will be given in Section ~\ref{theo2-1proof}.  Note that for each $i \in \Lambda$, $\dim_{\text{A}}F_{\underline i}$ is the Assouad dimension of the deterministic self-similar set $F_{\underline i}$, which may be computed via the Hutchinson-Moran formula since the OSC is satisfied. We will provide an example in Section \ref{counterexample} showing that this upper bound can fail if we do not assume the UOSC.  We are also able to obtain an almost sure lower bound.
\begin{thm}\label{theo2-2}
Let $\mathbb{I}$ be a RIFS consisting of IFSs of similarities as defined in $(\ref{def_ss})$.  Then, for almost all $\omega \in \Omega$, we have
\begin{equation}\label{theo2-2eq}
\dim_{\text{\emph{A}}}F_{\omega} \ \geq \ \max_{i\in \Lambda}  \, \dim_{\text{\emph{A}}}F_{\underline i}.
\end{equation}
\end{thm}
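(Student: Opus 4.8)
The plan is to show that, for each fixed $i \in \Lambda$, the deterministic attractor $F_{\underline i}$ is almost surely a \emph{weak tangent} of $F_\omega$. Since the Assouad dimension does not increase on passing to a weak tangent and is monotone under set inclusion, this yields $\dim_{\text{A}}F_\omega \geq \dim_{\text{A}}F_{\underline i}$ for every $i$, and maximising over the finite set $\Lambda$ completes the argument. First fix $i \in \Lambda$. Since $p_i>0$ and the coordinates of $\omega$ are independent under $\mu$, a Borel--Cantelli argument shows that for $\mu$-almost every $\omega$ the realisation $\omega$ contains arbitrarily long runs of the symbol $i$; intersecting over the finitely many $i \in \Lambda$ we restrict attention to the full-measure event on which this holds for all $i$ simultaneously.

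Now fix such an $\omega$ and an $i$, and for $l \in \N$ choose $k=k(l)$ with $\omega_{k+1}=\dots=\omega_{k+l}=i$; write $\tau=\tau(l)=\sigma^{k}\omega$. Iterating the identity $F_{\nu}=\bigcup_{j\in\mathcal{I}_{\nu_1}}S_{\nu_1,j}(F_{\sigma\nu})$, which follows directly from \eqref{attractoreq}, gives $F_{\tau}=\bigcup_{u\in\mathcal{I}_i^{\,l}}S_{i,u_1}\circ\dots\circ S_{i,u_l}\big(F_{\sigma^{l}\tau}\big)$. On the one hand $F_\tau\subseteq\bigcup_{u\in\mathcal{I}_i^{\,l}}S_{i,u_1}\circ\dots\circ S_{i,u_l}(X)$, the $l$-th level approximation of $F_{\underline i}$, which lies within Hausdorff distance $(\max_{j}c_{i,j})^{l}\,\mathrm{diam}(X)$ of $F_{\underline i}$; on the other hand, since $F_{\sigma^{l}\tau}\neq\varnothing$ the set $F_\tau$ meets every cylinder $S_{i,u_1}\circ\dots\circ S_{i,u_l}(X)$, and every point of $F_{\underline i}$ lies in one such cylinder, so $F_{\underline i}$ lies within the same Hausdorff distance of $F_\tau$. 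Hence $d_H\big(F_{\tau(l)},\,F_{\underline i}\big)\to 0$ as $l\to\infty$. Note that the arbitrary tail $F_{\sigma^{l}\tau}\subseteq X$ enters only through its non-emptiness, which is precisely why no separation hypothesis is needed at this point.

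Finally, pick any admissible word $(a_1,\dots,a_{k(l)})$ and set $\phi_l=S_{\omega_1,a_1}\circ\dots\circ S_{\omega_{k(l)},a_{k(l)}}$, a similarity of $\R^d$ mapping $X$ into itself with $F_\omega\supseteq\phi_l\big(F_{\tau(l)}\big)$; then $T_l:=\phi_l^{-1}$ is a similarity satisfying $T_l(F_\omega)\cap X\supseteq F_{\tau(l)}$. By the Blaschke selection theorem we may pass to a subsequence along which $T_l(F_\omega)\cap X$ converges in the Hausdorff metric to a compact set $E\subseteq X$; since $F_{\tau(l)}\subseteq T_l(F_\omega)\cap X$ and $F_{\tau(l)}\to F_{\underline i}$ we obtain $F_{\underline i}\subseteq E$. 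Thus $E$ is a weak tangent of $F_\omega$, and the weak tangent inequality (see \cite{mackaytyson, Kaenmaki13, Fraser14}) together with monotonicity of $\dim_{\text{A}}$ gives $\dim_{\text{A}}F_{\underline i}\leq\dim_{\text{A}}E\leq\dim_{\text{A}}F_\omega$, as required. The step needing most care is the Hausdorff convergence $F_{\sigma^{k(l)}\omega}\to F_{\underline i}$ of the middle paragraph, i.e.\ verifying that a long block of maps from $\mathbb{I}_i$ genuinely erases the influence of the remaining tail of the realisation; a secondary technical point is fixing the precise statement of the weak tangent principle for sequences of similarities. One could instead argue directly, transferring the Assouad covering bound for $F_\omega$ onto the near-isometric copies of $F_{\underline i}$ present at all sufficiently small scales, but this route is more cumbersome.
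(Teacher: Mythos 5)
Your argument is correct and follows the same overall strategy as the paper's proof (Section \ref{theo2-2proof}): a Borel--Cantelli argument gives a full-measure set of realisations containing arbitrarily long runs of each symbol $i$ (the paper's good set $G_i$ and Lemma \ref{exceptionalLemma}), and one then conjugates $F_\omega$ by the inverse of a branch map of length $k(l)$ so that the shifted attractor $F_{\sigma^{k(l)}\omega}$, which begins with a long block of $i$'s, sits inside the blown-up set. Where you diverge is in the tangent machinery. The paper invokes the \emph{weak pseudo tangent} principle (Proposition \ref{pseudoweak}), which needs only the one-sided estimate $p_{\mathcal{H}}\big(F_{\underline i}, T_{k_N}F_\omega\big)\leq (c^i_{\max})^{N}\to 0$ and therefore never has to identify an actual Hausdorff limit of $T_k(F_\omega)\cap X$; indeed the paper notes that, because of overlaps, $F_{\underline i}$ itself need not be a weak tangent of $F_\omega$ (so your opening sentence overstates what you prove, though the body of your argument does not rely on that claim). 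You instead establish the stronger two-sided convergence $d_{\mathcal{H}}\big(F_{\sigma^{k(l)}\omega},F_{\underline i}\big)\to 0$ --- which is correct, and a nice observation the paper does not need --- and then use Blaschke selection to extract a genuine weak tangent $E\supseteq F_{\underline i}$ of $F_\omega$, finishing with Proposition \ref{weaktang00} and monotonicity of the Assouad dimension. Your route is self-contained in that it avoids the pseudo-tangent proposition, essentially re-proving it by compactness, at the cost of passing to a subsequence and carrying along a limit set $E$ whose only relevant feature is that it contains $F_{\underline i}$; the paper's route is shorter once Proposition \ref{pseudoweak} is taken as given. Both yield complete proofs of the theorem.
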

The proof of Theorem \ref{theo2-2} will be given in Section~\ref{theo2-2proof}.  Note that the Theorem~\ref{theo2-2} requires no separation conditions, whereas Theorem~\ref{theo2-1} requires the UOSC. Combining the upper and lower estimates immediately yields our main result on random self-similar sets.
\begin{thm} \label{SSresult}
Let $\mathbb{I}$ be a RIFS consisting of IFSs of similarities as defined in $(\ref{def_ss})$.  Assume that $\mathbb{I}$ satisfies the UOSC.  Then
\[
\dim_{\text{\emph{A}}}F_{\omega} \ = \ \max_{i\in \Lambda} \, \dim_{\text{\emph{A}}}F_{\underline i},
\]
 for almost all $\omega\in\Omega$.
\end{thm}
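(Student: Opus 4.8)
The statement is the conjunction of the two preceding theorems, so at the top level the proof is immediate: Theorem~\ref{theo2-1} gives $\dim_{\mathrm{A}}F_\omega \le \max_{i\in\Lambda}\dim_{\mathrm{A}}F_{\underline i}$ for \emph{every} $\omega\in\Omega$ under the UOSC, while Theorem~\ref{theo2-2} gives the reverse inequality for $\mu$-almost every $\omega$ (needing no separation hypothesis, so in particular valid under the UOSC). Intersecting the universal upper-bound set of realisations with the co-null lower-bound set yields equality almost surely. Hence the substance lies entirely in the two ingredient bounds, whose proof strategies I sketch below.

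For the sure upper bound, put $s=\max_i\dim_{\mathrm{A}}F_{\underline i}$. Under the UOSC each $\mathbb I_i$ obeys the OSC, so $\dim_{\mathrm{A}}F_{\underline i}$ equals the similarity dimension $s_i$, i.e.\ $\sum_{j\in\mathcal I_i}c_{i,j}^{s_i}=1$; since $s\ge s_i$ and $c_{i,j}<1$ this forces the \emph{uniform} subconformality estimate $\sum_{j\in\mathcal I_i}c_{i,j}^{s}\le 1$ for every $i\in\Lambda$. A standard stopping-time/antichain argument (cut each branch $S_{\omega_1,i_1}\circ\cdots\circ S_{\omega_\ell,i_\ell}$ the first time its ratio drops below $\delta$) then shows $\sum_{\text{stopped }w}(\prod c_w)^s\le 1$ and hence that the number of $\delta$-stopped cylinders for $F_\nu$ is $\lesssim\delta^{-s}$, with a constant independent of the realisation; so $N_\delta(F_\nu)\le C\delta^{-s}$ uniformly in $\nu,\delta$. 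One upgrades this uniform box bound to an Assouad bound: for $0<r<R$ and $x\in F_\omega$, cover $B(x,R)\cap F_\omega$ by stopped cylinders of diameter $\asymp R$; the UOSC (disjoint scaled copies of the common open set, each of size $\gtrsim R$, packed in a ball of radius $\asymp R$) bounds the number meeting $B(x,R)$ by an absolute constant, and each is a similar copy (ratio $\asymp R$) of some $F_{\sigma^k\omega}$, covered at scale $r$ by $\le C(R/r)^s$ sets. This gives $N_r(B(x,R)\cap F_\omega)\lesssim (R/r)^s$, hence $\dim_{\mathrm{A}}F_\omega\le s$.

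For the almost sure lower bound, let $i^*$ attain the maximum, $s^*=\dim_{\mathrm{A}}F_{\underline{i^*}}$. The point is that $F_\omega$ contains, at arbitrarily small scales, sets that are arbitrarily close in the Hausdorff metric (relative to their diameter) to similar copies of $F_{\underline{i^*}}$. Indeed, whenever $\omega$ has a block $\omega_{k+1}=\cdots=\omega_{k+n}=i^*$, the set $F_{\sigma^k\omega}$ is a union of $n$-fold $\mathbb I_{i^*}$-cylinders applied to subsets of $X$, exactly as $F_{\underline{i^*}}$ is, so $d_{\mathcal H}(F_{\sigma^k\omega},F_{\underline{i^*}})\le(\max_j c_{i^*,j})^n\operatorname{diam}(X)$; and any length-$k$ cylinder map $g$ carries $F_{\sigma^k\omega}$ into $F_\omega$ as a similar copy of some small ratio $\rho$. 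By Borel--Cantelli applied to the independent events $\{\omega_{mn+1}=\cdots=\omega_{(m+1)n}=i^*\}$ (each of probability $p_{i^*}^n>0$), for $\mu$-a.e.\ $\omega$ and every $n$ there are infinitely many such blocks, at positions tending to infinity, hence such pieces with $\rho\to 0$. Fixing $\alpha<s^*$, one uses $\dim_{\mathrm{A}}F_{\underline{i^*}}>\alpha$ to produce $x_0\in F_{\underline{i^*}}$ and scales $0<r_0<R_0$ at which $N_{r_0}\big(B(x_0,R_0)\cap F_{\underline{i^*}}\big)$ exceeds \emph{any} prescribed multiple of $(R_0/r_0)^\alpha$, transfers a near-optimal $r_0$-separated configuration to $F_{\sigma^k\omega}$ (taking $n$ large enough that the Hausdorff error is $\ll r_0$, losing only dimensional constants), and pushes it forward by $g$ to violate the Assouad $\alpha$-bound inside $F_\omega$ at scales $\asymp\rho R_0$; since $\rho\to 0$ this works below every threshold and for every constant, so $\dim_{\mathrm{A}}F_\omega\ge\alpha$, and $\alpha\uparrow s^*$ finishes.

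The main obstacle is the transfer step: the Assouad dimension is not continuous — not even lower semicontinuous — under Hausdorff convergence, so one cannot argue that $F_\omega$ contains sets converging to $F_{\underline{i^*}}$ and conclude directly. The resolution is to work at a single pair of scales $(r_0,R_0)$ chosen \emph{before} the Hausdorff error is introduced, and to exploit that the constant in the definition of Assouad dimension may be taken arbitrarily large, so that all the incidental losses (separated-to-covered comparisons, ball enlargements, the block-approximation error) are absorbed. It is also worth noting that the UOSC is genuinely required for the upper half: without it the bounded-overlap count of $R$-cylinders breaks down and, as the authors remark, the conclusion itself fails; the lower half, and hence one inequality of the asserted equality, is unconditional.
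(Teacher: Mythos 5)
Your proposal is correct and follows essentially the same route as the paper: Theorem \ref{SSresult} is indeed obtained by simply intersecting the sure upper bound of Theorem \ref{theo2-1} with the full-measure set from Theorem \ref{theo2-2}, and your sketches of the two ingredients match the paper's arguments (the Olsen-style stopping-time/volume count under the UOSC for the upper bound, and long $i^*$-blocks via Borel--Cantelli for the lower bound). The only cosmetic difference is that where the paper packages the transfer step as ``$F_{\underline{i^*}}$ is a weak pseudo tangent to $F_\omega$'' and invokes Proposition \ref{pseudoweak} as a black box, you re-derive that implication by hand at a fixed scale pair $(r_0,R_0)$ --- correctly identifying the failure of semicontinuity of Assouad dimension under Hausdorff convergence as the point that makes this step nontrivial.
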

The results above are in stark contrast to the analogous almost sure formulae for the Hausdorff, packing and box dimension which are some form of weighted average of the deterministic values.  One can, for example, show that the Hausdorff dimension of a random 1-variable self-similar set satisfying the UOSC is almost surely given by the unique zero of the weighted average of the logarithm of the Hutchinson-Moran formulae for the individual IFSs~\cite[Remark 4.5]{Barnsley12}, i.e the unique $s$ satisfying
\[
\sum_{i\in\Lambda}p_{i}\log\left(\sum_{j\in\Lambda_{i}}c_{i,j}^{s}\right)=0.
\]
 A neat consequence of this is that the Assouad dimension and the Hausdorff dimension can be almost surely distinct, no matter which separation condition you assume.  Recall that in the deterministic setting the WSP is sufficient to guarantee equality, and in the random setting it was proved by Liu and Wu that the Hausdorff and box dimensions almost surely coincide, even if there are overlaps \cite{Liu02}.  In fact the only way the Assouad and Hausdorff dimensions can almost surely coincide in the UOSC case is if all of the deterministic IFSs had the same similarity dimension.  Also, apart from in this rare situation, our result shows that random self-similar sets are almost surely not Ahlfors regular, as for Ahlfors regular sets the Hausdorff and Assouad dimensions coincide.  Finally we obtain precise information on the size of the exceptional set in Theorem \ref{SSresult}.
\begin{thm}\label{exceptionalResult}
Let $\mathbb{I}$ be a RIFS consisting of IFSs of similarities as defined in $(\ref{def_ss})$. Assume that $\mathbb{I}$ satisfies the UOSC and that $\dim_{\text{\emph{A}}}F_{\underline i}$ is not the same for all $i \in \Lambda$, i.e. the similarity dimensions of the deterministic attractors are not all the same.  Then the exceptional set
\[
E \ = \ \big\{ \omega \in \Omega \mid \dim_{\text{\emph{A}}}F_{\omega} < \max_{i\in \Lambda} \, \dim_{\text{\emph{A}}}F_{\underline i} \big\}
\]
is a set of full Hausdorff dimension, despite being a $\mu$-null set, i.e. $\dim_{\text{\emph{H}}}E=\dim_{\text{\emph{H}}}\Omega$.
\end{thm}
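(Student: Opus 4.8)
The plan is to exhibit a large (in the sense of Hausdorff dimension) subset of the exceptional set $E$ by forcing the realisation $\omega$ to look, on arbitrarily long blocks, like the deterministic code of whichever IFS has \emph{smaller} Assouad dimension, while still allowing enough freedom elsewhere to keep the Hausdorff dimension of the resulting subset of $\Omega$ close to $\dim_{\text{H}}\Omega = \dim_{\text{H}}\Lambda^{\mathbb N}$. Let $s^{*} = \max_{i\in\Lambda}\dim_{\text{A}}F_{\underline i}$ and fix some $i_{0}\in\Lambda$ with $\dim_{\text{A}}F_{\underline{i_{0}}} < s^{*}$ (such an $i_{0}$ exists by hypothesis). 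Using Theorem~\ref{theo2-1}, which is the crucial input here, the key deterministic fact is that if $\omega$ eventually agrees with $\underline{i_{0}}$ on a sufficiently sparse but infinite set of ever-longer windows, then the local structure of $F_{\omega}$ around the corresponding cylinders is governed only by the IFS $\mathbb{I}_{i_{0}}$ at those scales, and this will be enough to pull $\dim_{\text{A}}F_{\omega}$ strictly below $s^{*}$.

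First I would make precise the \emph{window construction}: choose a rapidly increasing sequence of integers $n_{1} < m_{1} < n_{2} < m_{2} < \cdots$ and define $E' \subseteq \Omega$ to be the set of $\omega$ with $\omega_{k} = i_{0}$ for all $k$ in the blocks $[n_{j}, m_{j}]$, with $\omega_{k}$ arbitrary elsewhere. On each block of length $m_{j}-n_{j}+1$ the sub-attractor sitting inside the corresponding cylinder of $F_{\omega}$ is a rescaled copy of a level-$(m_j-n_j+1)$ approximation of $F_{\underline{i_{0}}}$, so by the same covering estimate that proves Theorem~\ref{theo2-1} (applied to these homogeneous blocks rather than to all of $\omega$), one gets a uniform bound of the form $N_{r}(B(x,R)\cap F_\omega) \le C (R/r)^{s^{*}-\delta}$ for a range of scales $R/r$ that grows with $j$, provided the $m_{j}-n_{j}$ grow fast enough and the "bad" blocks in between are relatively short. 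Taking $j\to\infty$, these scale ranges exhaust all small scales, so $\dim_{\text{A}}F_{\omega} \le s^{*}-\delta < s^{*}$ for every $\omega\in E'$; hence $E'\subseteq E$.

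Next I would compute $\dim_{\text{H}}E'$. Since $E'$ is a (closed) subset of $\Omega = \Lambda^{\mathbb N}$ cut out by prescribing symbols on the blocks $[n_{j},m_{j}]$ and leaving $|\Lambda|$ free choices everywhere else, a standard mass-distribution / natural-covering argument on the self-similar metric space $(\Omega,d)$ shows
\[
\dim_{\text{H}} E' \ = \ \liminf_{k\to\infty} \frac{\#\{\,1\le\ell\le k : \ell \text{ is a free coordinate}\,\}}{k}\cdot \dim_{\text{H}}\Omega .
\]
By choosing the block lengths $m_{j}-n_{j}$ to grow, but the gaps $n_{j+1}-m_{j}$ to grow much faster (e.g. $n_{j+1}-m_{j} = j\,(m_{j}-n_{j})$ or any schedule making the density of prescribed coordinates tend to $0$), the proportion of free coordinates among the first $k$ tends to $1$, so $\dim_{\text{H}}E' = \dim_{\text{H}}\Omega$. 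The two requirements on the schedule — blocks long enough for the Assouad estimate, gaps long enough for the dimension estimate — are compatible because they constrain different asymptotics, and I would spell out one explicit choice of $(n_{j},m_{j})$ that satisfies both. Since $E'\subseteq E\subseteq\Omega$, monotonicity of Hausdorff dimension gives $\dim_{\text{H}}E = \dim_{\text{H}}\Omega$, while the fact that $E$ is a $\mu$-null set is immediate from Theorem~\ref{SSresult} (or directly from the Borel–Cantelli lemma applied to the prescribed blocks).

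The main obstacle is the first step: turning "long homogeneous blocks" into an honest upper bound $\dim_{\text{A}}F_{\omega}\le s^{*}-\delta$ uniformly over $E'$. One has to be careful that the \emph{intermediate} (non-homogeneous) blocks do not spoil the covering estimate at the relevant scales — concretely, one must check that any pair of scales $r<R$ can be "absorbed" into a single homogeneous block $[n_j,m_j]$ (up to bounded distortion coming from the contraction ratios in the bad blocks), which is exactly where the growth condition on $m_j-n_j$ relative to the gaps is used. I expect this to be a refinement of the bookkeeping already present in the proof of Theorem~\ref{theo2-1} rather than a genuinely new idea, but it is the place where the argument has to be done honestly.
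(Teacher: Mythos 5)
Your construction does not produce a subset of $E$, and this is a genuine gap rather than a bookkeeping issue. The Assouad dimension is a supremum over \emph{all} pairs of scales $0<r<R\leq\rho$, so inserting arbitrarily long homogeneous $i_0$-blocks at a sparse set of positions cannot pull $\dim_{\text{A}}F_\omega$ below $s^*$: it only controls the covering numbers over the scale ranges corresponding to those blocks. The scales corresponding to the gaps $(m_j,n_{j+1})$ remain uncontrolled, and since your Hausdorff dimension count forces these gaps to have length tending to infinity (the density of prescribed coordinates must tend to $0$ to get \emph{full} dimension), your set $E'$ contains realisations $\omega$ whose gaps are filled entirely with a letter $i^*$ attaining the maximum $s^*$. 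For such $\omega$ the argument of Theorem \ref{theo2-2} applies verbatim: $F_{\underline{i^*}}$ is a weak pseudo tangent to $F_\omega$, whence $\dim_{\text{A}}F_\omega\geq s^*$ and $\omega\notin E$. The two requirements you identify (``blocks long enough for the Assouad estimate, gaps long enough for the dimension estimate'') are therefore directly contradictory rather than complementary: the Assouad bound needs the uncontrolled stretches to be short at every scale, while the dimension bound needs them to dominate.

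The paper resolves this by running the construction the other way around: instead of \emph{forcing} long runs of a non-maximising letter, it \emph{forbids} long runs of maximising letters. Writing $\Lambda_E$ for the set of letters whose attractors have non-maximal Assouad dimension, the set $E_n$ consists of all $\omega$ in which every run over $\Lambda\setminus\Lambda_E$ has length at most $n$; recoding $\omega$ over the alphabet of blocks $vw$ with $v\in\bigcup_{k=0}^{n}(\Lambda\setminus\Lambda_E)^k$ and $w\in\Lambda_E$, every such block corresponds to an IFS of similarity dimension strictly below $s^*$, and Theorem \ref{theo2-1} applied to the recoded system gives $\dim_{\text{A}}F_\omega<s^*$ surely on $E_n$. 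Each $E_n$ has Hausdorff dimension only slightly less than $\log N/\log 2$, but $\sup_n\dim_{\text{H}}E_n=\log N/\log 2$, which suffices since $E\supseteq\bigcup_n E_n$. To rescue your approach you would have to impose exactly this restriction on your ``free'' coordinates, at which point it becomes the paper's proof.
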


The proof of Theorem \ref{exceptionalResult} can be found in Section~\ref{exceptionalProof2}. The following two figures depict some examples of random self-similar sets.  The RIFS is made up of three deterministic IFSs, which are shown in Figure \ref{picdeterministicSS}.  Dotted squares indicate the (homothetic) similarities used.  In Figure \ref{picrandomSS}, three different random realisations are shown, which will (almost surely) all have the same Assouad dimension as the maximum of the three deterministic values.

\begin{figure}[H]
\setlength{\unitlength}{\textwidth}
\begin{picture}(1,0.28)
\put(0,0){\includegraphics[width=0.28\unitlength]{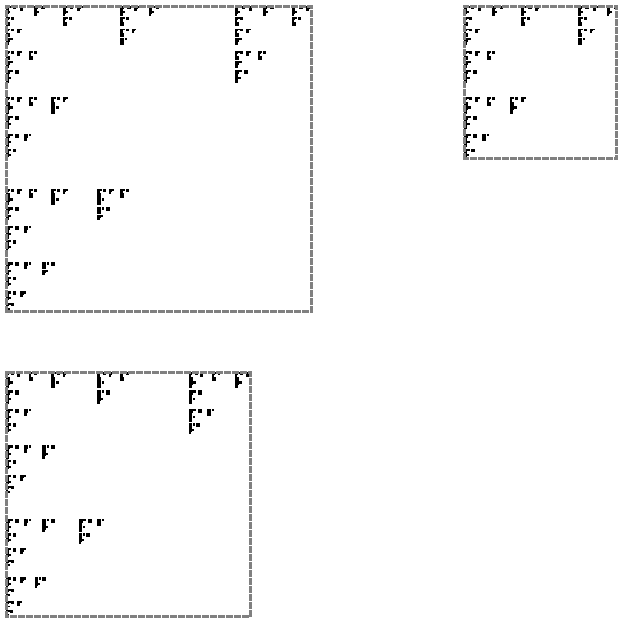}}
\put(0.35,0){\includegraphics[width=0.28\unitlength]{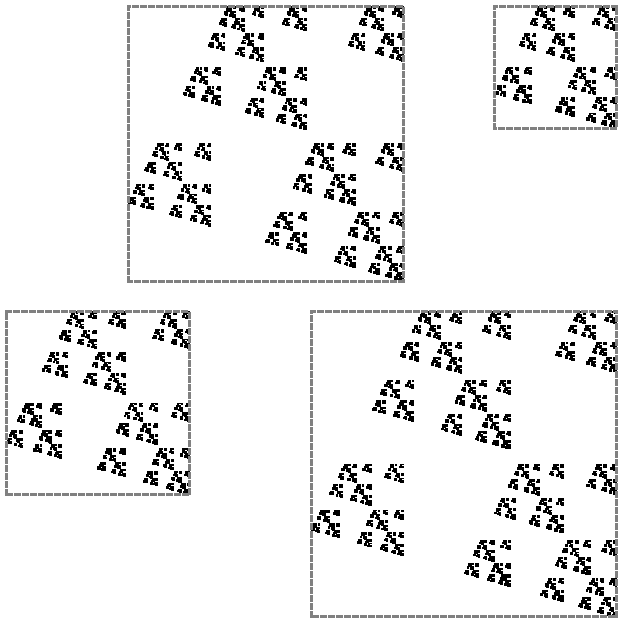}}
\put(0.7,0){\includegraphics[width=0.28\unitlength]{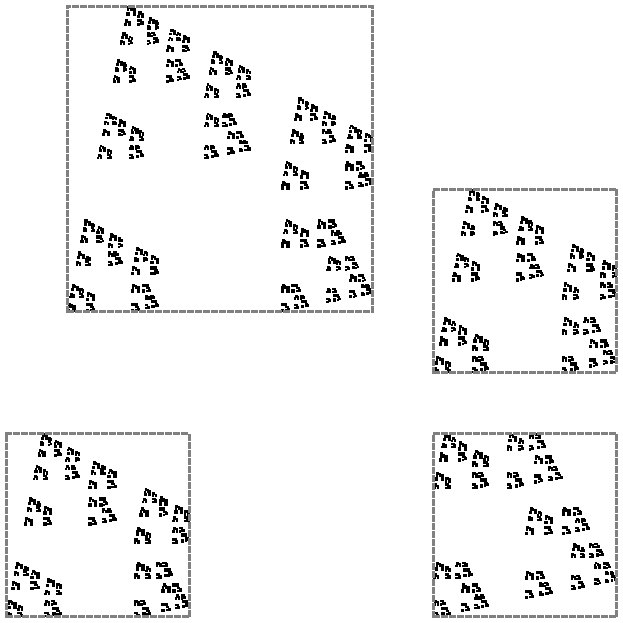}}
\end{picture}
\caption{Deterministic self-similar attractors $F_{\underline 1}$, $F_{\underline 2}$ and $F_{\underline 3}$.}
\label{picdeterministicSS}
\end{figure}

\begin{figure}[H]
\setlength{\unitlength}{\textwidth}
\begin{picture}(1,0.28)
\put(0,0){\includegraphics[width=0.28\unitlength]{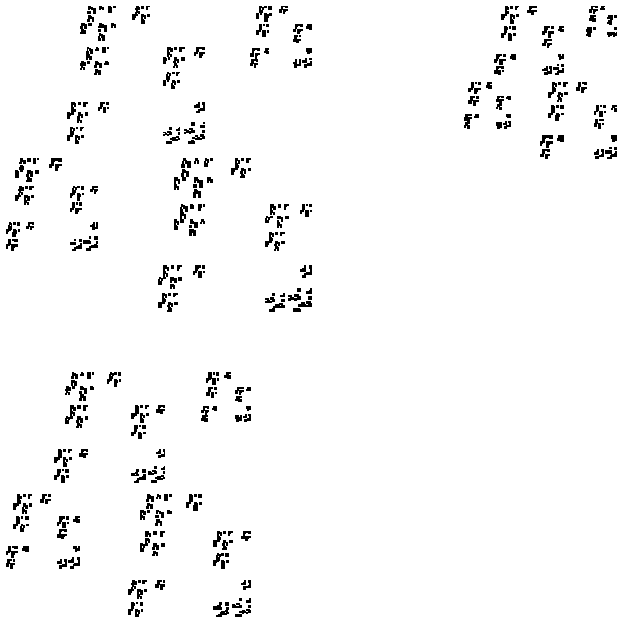}}
\put(0.35,0){\includegraphics[width=0.28\unitlength]{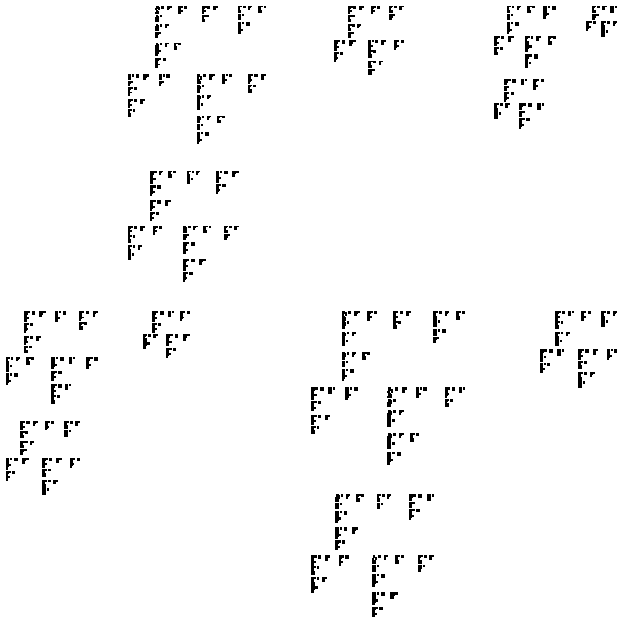}}
\put(0.7,0){\includegraphics[width=0.28\unitlength]{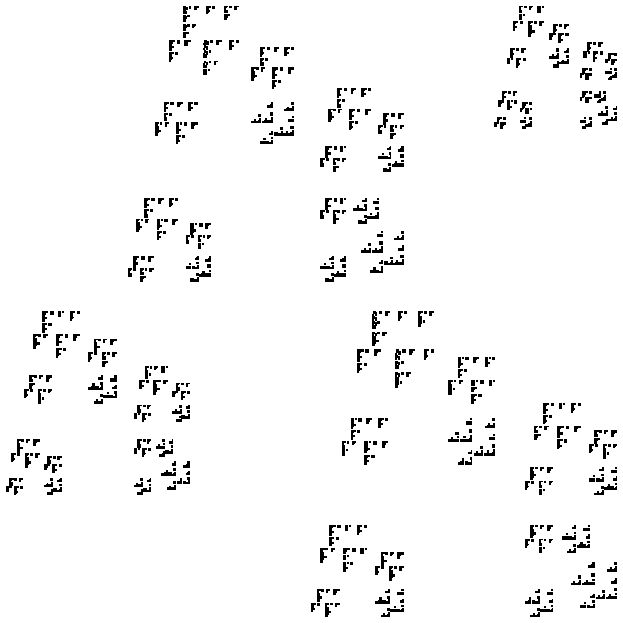}}
\end{picture}
\caption{Random self-similar attractors $F_{\alpha}$, $F_{\beta}$ and $F_{\gamma}$ for different realisations $\alpha=(1,2,3,1,2,1,3,3,\hdots),\,  \beta=(2,1,2,1,1,1,1,3,\hdots), \, \gamma=(2,3,3,2,1,1,1,3,\hdots) \in\Omega$.}
\label{picrandomSS}
\end{figure}

We finish this section by mentioning that Li, Li, Miao and Xi \cite{Li14} studied the Assouad dimension of Moran sets $E$ generated by two sequences $\{n_k\in\mathbb{N}\}_{k=1}^\infty$ and $\{\phi_k\in\mathbb{R}^{n_k}\}_{k=1}^\infty$, where $n_k$ indicates the number of contractions, and $\phi_k=(c_{k,1},\cdots,c_{k,n_k})$ gives the contraction ratios at the $k$th level. They show that $\displaystyle \dim_\text{A} E =\lim_{m\to\infty }  \sup_{k} s_{k,k+m}$, where $s_{k,k+m}$ is the unique solution  to the equation
$$
\prod_{i=k+1}^{k+m}  \sum_{j=1}^{n_i}( c_{i,j})^s=1.
$$
By choosing $\phi_k=(c_{k,1},\cdots,c_{k,n_k})$ from a fixed number of patterns, such a Moran set may be regarded as a particular realisation of our random self-similar sets.  Therefore this result gives information about specific realisations, whereas our results study the generic situation.

\subsection{An example with overlaps}\label{counterexample}

Here we provide an example showing that the assumption of some separation condition in Theorem \ref{theo2-1} is necessary.
Let the RIFS $\mathbb{I}$ be the system consisting of two IFSs of similarities, $\mathbb{I}_{1}$ and $\mathbb{I}_{2}$.
Let $\mathbb{I}_{1}$ be the IFS consisting of the three maps $S_{1,1},S_{1,2}$ and $S_{1,3}$ and $\mathbb{I}_{2}$ consist of the three maps $S_{2,1},S_{2,2}$ and $S_{2,3}$, where $S_{i,j}:\R\to\R$ and
\begin{align*}
S_{1,1}&=\tfrac{1}{2}x,&
S_{1,2}&=\tfrac{1}{4}x,&
S_{1,3}&=\tfrac{1}{16}x+\tfrac{15}{16},\\
S_{2,1}&=\tfrac{1}{3}x,&
S_{2,2}&=\tfrac{1}{9}x,&
S_{2,3}&=\tfrac{1}{81}x+\tfrac{80}{81}.
\end{align*}
As $S_{i,1}$ and $S_{i,2}$ have the same fixed point for $i =1,2$,  both $\mathbb{I}_{1}$ and $\mathbb{I}_{2}$ fail the OSC. This can be shown by taking $\alpha=(1,1)$, $\beta=(2)$. We have $S^{-1}_{i,\alpha}\circ S_{i,\beta}\in\mathcal E$ but $S^{-1}_{i,\alpha}\circ S_{i,\beta}=I$ and so $I\in\mathcal E$, which means the two IFSs fail the OSC and so $\mathbb{I}$ fails to satisfy the UOSC.
Furthermore, if one considers the individual IFSs, since $\frac{\log c_{i,1}}{\log c_{i,2}}\in\Q$ for $i=1,2$, one can show directly from the definition that the WSP is satisfied.  Therefore, for both systems the Assouad and Hausdorff dimensions coincide and are therefore no greater than their similarity dimensions, see \cite{Fraser14}.  That is $\dim_{\text{A}}F_{\underline i}\leq s_{i}$ for $i=1,2$, where $s_{i}$ is given implicitly by the Hutchinson-Moran formula $\sum_{j=1}^{3}c_{i,j}^{s_{i}}=1$.
Solving numerically we find that $s_{1}\approx0.81137$ and $s_{2}\approx0.511918$ and so $\max_{i\in \Lambda}\, \dim_{\text{A}}F_{\underline i} <1$.
Consider however $\omega=(1,2,1,2,1,\hdots)$.
This is equivalent to the deterministic IFS consisting of the 9 possible compositions of a map from $\mathbb{I}_{1}$ with a map from $\mathbb{I}_{2}$. Consider just the two maps
\begin{eqnarray*}
T_{1}&=&S_{1,1}\circ S_{2,2}=\tfrac{1}{18}x,\\
T_{2}&=&S_{1,2}\circ S_{2,1}=\tfrac{1}{12}x.
\end{eqnarray*}
One can check that $\log18/\log12\notin\Q$ and therefore using an argument similar to the one in \cite[Section~3.1]{Fraser14a} one can show that $\dim_{\text{A}}F_{(1,2,1,\hdots)}=1$, which is strictly greater than the maximum given by the deterministic IFS, showing that if the UOSC is not satisfied, then the Assouad dimension of particular realisations can exceed the maximum of the deterministic values.

\begin{figure}[H]
\centering
\setlength{\unitlength}{\textwidth}
\includegraphics[width=0.8\unitlength]{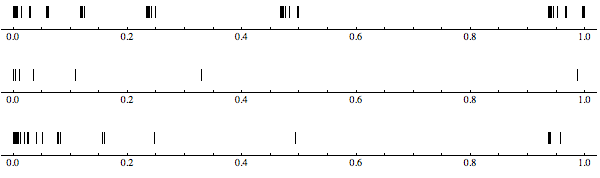}
\caption{Top and middle: the two deterministic attractors which both have Assouad dimension strictly smaller than 1.  Bottom: the random self-similar set for the realisation $\omega = (1,2,1,2,\dots)$ which has Assouad dimension 1.  Stretching the imagination slightly, one can see the unit interval emerging as a tangent at the origin for the third set, but the rational dependence between the contraction ratios prevents this happening for the first two examples.}
\label{picrandomSSoverlaps}
\end{figure}

\section{Almost sure dimension for random self-affine carpets}\label{BMSection}

Self-affine carpets are a special type of planar self-affine set that were first studied independently by Bedford and McMullen in the 1980s \cite{Bedford84,McMullen84}.
The properties of Bedford and McMullen's original model and various generalisations by Bara\'nski \cite{Baranski07}, Lalley and Gatzouras \cite{LalleyGatzouras92}, Feng and Wang \cite{Feng05}, and the first author\cite{Fraser12} have been extensively studied with the main aim to find the Hausdorff, packing and box-counting dimension.  Several random versions have also been considered including 1-variable randomisations \cite{Gui08, Fraser11} and statistically self-affine constructions \cite{LalleyGatzouras94}. More recently, some authors have also considered the Assouad dimension of these sets, see \cite{Mackay11,Fraser14a}.  Fraser and Shmerkin \cite{FraserShmerkin} considered the dimensions of random self-affine carpets where for them the randomness was obtained by randomly translating the column structure.  They computed the almost sure Hausdorff and box dimensions and remarked that the situation for the Assouad dimension was not clear because the Assouad dimension could `jump up' above the initial value.  It turns out that in our model, the Assouad dimension is similarly sensitive to `jumping up' and we show that, in a different context, the Assouad dimension of random self-affine carpets can again `jump up' above the initial and expected values, see the example in Section~\ref{carpetscounterex}.
\\ \\
We introduce Bedford-McMullen carpets here with the IFS and RIFS notation introduced in Section \ref{randomIntro}. For each $i \in \Lambda$, let $m_i,n_i$ be fixed integers with $n_i > m_i \geq 2$. Then, for each $i \in \Lambda$, divide the unit square $[0,1]^2$ into a uniform $m_i \times n_i$ grid and select a subset of the sub-rectangles formed. Let the IFS $\mathbb{I}_i$ be made up of the affine maps which take the unit square onto each chosen sub-rectangle without any rotation or reflection.  As such the constituent maps $S_{i,j}:[0,1]^2 \to [0,1]^2$ are of the form
\[
S_{i,j}=\left(\frac{x}{m_i}+\frac{a_{i,j}}{m_i},\;\frac{y}{n_i}+\frac{b_{i,j}}{n_i}\right),
\]
for integers $ a_{i,j}, b_{i,j}$, where $0\leq a_{i,j}<m_{i}$, and $0\leq b_{i,j}<n_i$. For each $i \in \Lambda$, let $A_i$ be the number of distinct integers $a_{i,j}$ used for maps in $\mathbb{I}_i$, i.e. the number of non-empty columns in the defining pattern for the $i$th IFS.  Also, for each $i \in \Lambda$, let $B_{i} = \max_{k \in \{0, \dots, m-1\}} \lvert \{ S_{i,j} \in \mathbb{I}_i : a_{i,j} = k\}\rvert$, i.e. the maximum number of rectangles chosen in a particular column of the defining pattern for the $i$th IFS.  For the deterministic IFS $\mathbb{I}_i$ with attractor $F_{\underline{i}}$, it was shown by Mackay \cite{Mackay11} that
\begin{equation}\label{MackayAssouadDim}
\dim_\text{A} F_{\underline{i}} \ = \ \frac{\log A_i}{\log m_i} \, + \, \frac{\log B_i}{\log n_i}.
\end{equation}
One interpretation of this is that the Assouad dimension is the dimension of the projection of $F_{\underline{i}}$ onto the first coordinate plus the maximal dimension of a vertical slice through $F_{\underline{i}}$.  A reasonable first guess for the almost sure Assouad dimension of the random attractors of $\mathbb{I}$ would be to take the maximum of equation~(\ref{MackayAssouadDim}) over all $i \in \Lambda$.  Surprisingly this is not the correct answer, as we shall see in this section.  First we prove a sure upper bound, which at first sight does not look particularly sharp.
\begin{thm}\label{upperboundcarpets}
Let $\mathbb {I}$ be as above. Then for all $\omega\in\Omega$
\[
\dim_\text{\emph{A}} F_\omega \  \leq  \ \max_{i\in\Lambda} \frac{\log A_i}{\log m_i} \, + \, \max_{i\in\Lambda} \frac{\log B_i}{\log n_i}.
\]
\end{thm}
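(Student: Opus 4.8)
The plan is to bound the covering numbers $N_r(B(x,R) \cap F_\omega)$ by separately controlling the ``horizontal'' and ``vertical'' directions at each scale, mimicking the product-like structure that underlies Mackay's formula (\ref{MackayAssouadDim}). Fix a realisation $\omega \in \Omega$ and consider scales $0 < r < R$. The key combinatorial quantities in the random construction are the numbers $A_{\omega_k}$ of non-empty columns at level $k$ and $B_{\omega_k}$, the maximal column occupancy at level $k$. Because the contraction ratios in the two coordinate directions are $1/m_{\omega_k}$ horizontally and $1/n_{\omega_k}$ vertically with $n_{\omega_k} > m_{\omega_k}$, a ball $B(x,R)$ meets the level-$k$ construction rectangles in a way that, for an appropriately chosen $k$, has horizontal extent comparable to $R$ and the vertical subdivision down to scale $r$ proceeds through possibly several further levels.

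The main steps I would carry out are as follows. First, given $R$, choose the integer $k$ so that the horizontal cylinder side length $\prod_{j \le k}(1/m_{\omega_j})$ is comparable to $R$; the number of level-$k$ cylinders meeting $B(x,R)$ is then $O(1)$ in the horizontal direction, but one must also account for the vertical direction, where $B(x,R)$ may intersect on the order of $R \big/ \prod_{j \le k}(1/n_{\omega_j})$ distinct ``rows'' — this quantity need not be bounded, and this mismatch between $m$ and $n$ is exactly what forces the separated-maxima form of the bound rather than $\max_i \dim_{\text A} F_{\underline i}$. Second, from scale $R$ down to scale $r$, pass through the levels $k+1, \dots, \ell$ where $\ell$ is chosen so the vertical side length $\prod_{j \le \ell}(1/n_{\omega_j})$ is comparable to $r$; at each such level the number of offspring rectangles inside a fixed column is at most $B_{\omega_j} \le \max_{i} B_i$, while the total number of columns that can appear is at most $A_{\omega_j} \le \max_i A_i$. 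Third, multiply these per-level bounds and re-express the product in terms of $R/r$: the horizontal contribution accumulates a factor at most $\big(\text{something}\big)^{\max_i (\log A_i / \log m_i)}$ and the vertical contribution a factor at most $\big(\text{something}\big)^{\max_i (\log B_i / \log n_i)}$, and combining with the leftover ``row'' factor from the first step one arrives at $N_r(B(x,R) \cap F_\omega) \le C (R/r)^{\alpha}$ with $\alpha = \max_i (\log A_i/\log m_i) + \max_i (\log B_i/\log n_i)$.

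The technical heart — and the main obstacle — is the bookkeeping in the intermediate range of scales where the horizontal and vertical subdivisions are out of sync: between the level $k$ at which columns are resolved to scale $R$ and the level $\ell$ at which rows are resolved to scale $r$, one is simultaneously refining columns (gaining factors bounded by $A_{\omega_j}/m_{\omega_j}$ relative to the ideal) and refining rows (gaining factors $B_{\omega_j}$), and these have to be weighted against the correct powers of $R/r$. The clean way to handle this is to take logarithms, write $\log N_r(\cdot)$ as a telescoping sum of per-level increments, bound each increment by $\max_i(\log A_i/\log m_i)\cdot \log m_{\omega_j}$ plus $\max_i(\log B_i/\log n_i)\cdot\log n_{\omega_j}$ (using $\log A_{\omega_j} \le \max_i(\log A_i/\log m_i)\log m_{\omega_j}$ and similarly for $B$), and then sum; the $\log m_{\omega_j}$ and $\log n_{\omega_j}$ sums telescope to $\log R$ and $\log r$ up to bounded error. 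One must be slightly careful that this works \emph{sure}ly over all $\omega$ — but since every bound used is a pointwise bound on each factor by its worst-case value over $\Lambda$, no measure-theoretic input is needed, which is why the statement holds for all $\omega \in \Omega$. The fact that the resulting exponent is generally larger than $\max_i \dim_{\text A} F_{\underline i}$, and is in fact achieved, is the content of the later example in Section~\ref{carpetscounterex}, so for this theorem only the upper bound direction is required.
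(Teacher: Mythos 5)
Your overall architecture --- a two-scale decomposition, per-level counting of columns and of rectangles within a column, and a telescoping step using $\log A_{\omega_j} \le \max_i(\log A_i/\log m_i)\log m_{\omega_j}$ and $\log B_{\omega_j}\le \max_i(\log B_i/\log n_i)\log n_{\omega_j}$ --- matches the paper's, and your final algebraic manipulation is exactly what the paper does. However, there is a genuine gap at the very first step, which is precisely where the two maxima get decoupled. You count the ``rows'' of the level-$k$ construction (with $k=k_2^\omega(R)$ chosen so that the horizontal side length $\prod_{j\le k} m_{\omega_j}^{-1}$ is comparable to $R$) meeting $B(x,R)$ by the purely geometric quantity $R\prod_{j\le k} n_{\omega_j}$, which is roughly $\prod_{j=k_1^\omega(R)+1}^{k_2^\omega(R)} n_{\omega_j}$, where $k_1^\omega(R)$ is the level at which the \emph{vertical} side length reaches $R$. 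This quantity depends on $R$ alone, tends to infinity as $R\to 0$, and so is not dominated by $C(R/r)^{\alpha}$ for any $\alpha$ (take $r$ close to $R$); it contributes exponent $1$ per vertical level rather than $\log B_i/\log n_i$. Carrying it through your telescoping leaves an uncontrolled factor $\prod_{j=k_1^\omega(R)+1}^{k_2^\omega(R)} n_{\omega_j}^{1-s}$ with $s=\max_i\log B_i/\log n_i$, so the claimed bound does not follow (and no uniform Assouad-type bound follows at all) unless $B_i=n_i$ for all $i$.

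The missing idea is that one must count \emph{occupied} rows, and that this count is governed by the column-occupancy numbers $B_i$. The paper does this via random approximate $R$-squares: $B(x,R)$ is contained in a bounded number of sets of the form $S([0,1]^2)\cap(\pi_1(T([0,1]^2))\times[0,1])$, where $S$ is a composition down to the vertical level $k_1^\omega(R)$ and $T$ a composition down to the horizontal level $k_2^\omega(R)$ along the same symbol sequence. Inside such a square every surviving level-$k_2^\omega(R)$ rectangle lies in the \emph{same column} at each intermediate level $k_1^\omega(R)+1,\dots,k_2^\omega(R)$, so their number is at most $\prod_{i\in\Lambda} B_i^{\mathcal{N}_i(k_1^\omega(R)+1,k_2^\omega(R))}$ rather than $\prod_{j=k_1^\omega(R)+1}^{k_2^\omega(R)} n_{\omega_j}$; this is exactly the factor that concatenates with the later per-level $B$-factors over levels $k_2^\omega(R)+1,\dots,k_1^\omega(r)$ and telescopes to a power of $R/r$. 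The same remark applies, more mildly, to the final covering step: the number of $r$-squares needed to cover a level-$k_1^\omega(r)$ rectangle (of height at most $r$) must be bounded by the number of occupied columns $\prod_{i\in\Lambda} A_i^{\mathcal{N}_i(k_1^\omega(r)+1,k_2^\omega(r))}$, not by its width divided by $r$. With these two replacements your argument becomes the paper's proof.
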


Theorem \ref{upperboundcarpets} will be proved in Section \ref{upperboundcarpetsproof}. It turns out that this upper bound is almost surely sharp and this is the content of our main result in the self-affine setting.

\begin{thm}\label{lowerboundcarpets}
Let $\mathbb {I}$ be as above.  Then for almost all $\omega \in \Omega$, we have
\[
\dim_\text{\emph{A}} F_\omega \  =  \  \max_{i\in\Lambda} \, \frac{\log A_i}{\log m_i} \, + \, \max_{i\in\Lambda} \,  \frac{\log B_i}{\log n_i}.
\]
\end{thm}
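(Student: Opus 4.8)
Given Theorem~\ref{upperboundcarpets}, it suffices to prove the almost sure lower bound $\dim_{\mathrm{A}}F_\omega \geq s^\ast := \max_i \tfrac{\log A_i}{\log m_i} + \max_i \tfrac{\log B_i}{\log n_i}$. Fix $i^\ast$ attaining the first maximum and $j^\ast$ the second, and write $\alpha = \log_{m_{i^\ast}}A_{i^\ast}$, $\beta = \log_{n_{j^\ast}}B_{j^\ast}$. The plan is to produce, for almost every $\omega$ and along a sequence of scales, an \emph{approximate square} $Q$ (a cylinder refined $q_0$ times in the horizontal coding intersected with one refined only $p$ times in the vertical coding, with $q_0,p$ chosen so that $Q$ is comparable to a square of some side $\delta$) together with a finer scale $\delta'$ for which $N_{\delta'}(F_\omega\cap Q) \gtrsim A_{i^\ast}^{L'} B_{j^\ast}^{L}$, where $L\to\infty$ and $L' := \lceil L\log_{m_{i^\ast}}n_{j^\ast}\rceil$. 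Since $m_{i^\ast}^{L'}\approx n_{j^\ast}^{L}$ this count equals $(\delta/\delta')^{\alpha+\beta-o(1)}$ with $\delta/\delta' = n_{j^\ast}^{L}\to\infty$, and as $Q$ sits inside a ball of radius $O(\delta)$ about a point of $F_\omega$, this forces $\dim_{\mathrm{A}}F_\omega\geq\alpha+\beta$.

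The combinatorial input I would establish first is: for almost every $\omega$ and every $L\in\mathbb{N}$ there are infinitely many $p$ such that $\omega_{p+1}=\cdots=\omega_{p+L}=j^\ast$ and, at the level $q_0=q_0(p)$ at which $\prod_{n\leq q}m_{\omega_n}$ first reaches $\prod_{n\leq p}n_{\omega_n}$, one also has $\omega_{q_0+1}=\cdots=\omega_{q_0+L'}=i^\ast$. Note $q_0(p)>p$ and $q_0(p)-p\to\infty$ because each $n_{\omega_n}>m_{\omega_n}$, so the $j^\ast$-run and the block verifying the $i^\ast$-run occupy disjoint coordinates. Since there are almost surely infinitely many $j^\ast$-runs of length $L$, and for each such run the length-$L'$ block of coordinates just after the (random) level $q_0(p)$ is fresh and equals $(i^\ast,\dots,i^\ast)$ with probability $p_{i^\ast}^{L'}>0$ conditionally on $\omega|_{q_0(p)}$, a conditional (second) Borel--Cantelli argument, applied along a subsequence of the $j^\ast$-runs whose verification blocks are disjoint, gives infinitely many good $p$ almost surely; intersecting over $L\in\mathbb{N}$ handles all $L$ simultaneously.

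The estimate for a good $p$ runs as follows. Put $\delta = \prod_{n\leq p}n_{\omega_n}^{-1}$ and build an approximate square $Q$ of side comparable to $\delta$ pinned by an $x$-address of length $q_0$ and a $y$-address of length $p$: for levels $1,\dots,p$ take any admissible rectangles, and for levels $p+1,\dots,q_0$ pin the column $a_n$ to be the fullest column of $\mathbb{I}_{\omega_n}$ (so on the $j^\ast$-run $a_n$ is the fullest $\mathbb{I}_{j^\ast}$-column, which carries $B_{j^\ast}$ admissible rows). Now refine to scale $\delta' := \delta\, n_{j^\ast}^{-L}$, which corresponds to the $x$-address length $q_0+L'$ and the $y$-address length $p+L$. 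The admissible level-$(q_0+L')$ columns inside $Q$ are parametrised by the $L'$ free symbols on the $i^\ast$-run, giving $A_{i^\ast}^{L'}$ of them; the admissible level-$(p+L)$ rows are parametrised by a choice of one of the $B_{j^\ast}$ rows in the (pinned, fullest) column at each of the $L$ levels of the $j^\ast$-run, giving $B_{j^\ast}^{L}$; each such column--row pair is realised in $F_\omega$, and distinct pairs differ in the coding on the $i^\ast$-run or the $j^\ast$-run and hence give cells that are pairwise $\gtrsim\delta'$-separated. Thus $N_{\delta'}(F_\omega\cap Q)\gtrsim A_{i^\ast}^{L'}B_{j^\ast}^{L}$, and letting $L\to\infty$ yields $\dim_{\mathrm{A}}F_\omega\geq\alpha+\beta$; combined with Theorem~\ref{upperboundcarpets} this gives $\dim_{\mathrm{A}}F_\omega = s^\ast$ almost surely.

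I expect the difficulty to lie in two places. The conceptual heart --- and the reason the answer is the \emph{sum} of the separate maxima rather than $\max_i\bigl(\tfrac{\log A_i}{\log m_i}+\tfrac{\log B_i}{\log n_i}\bigr)$ --- is the decoupling exploited above: the long run of generic intermediate levels $p+L+1,\dots,q_0$ is absorbed into the pinning of the $x$-coordinate of $Q$ and contributes nothing to the refined count, so the horizontal spread may be harvested from an $i^\ast$-run while the vertical spread is harvested from an unrelated and much shallower $j^\ast$-run; this has no analogue for Hausdorff or box dimension, which must pay for those intermediate levels (hence the contrast highlighted in the paper). Making it precise requires care in matching the two refinement depths $p+L$ and $q_0+L'$ to a common scale $\delta'$ up to a bounded multiplicative error, so that $Q$ and its refinement are genuine approximate squares and the cells really are $\delta'$-separated; and the probabilistic step genuinely needs the conditional form of Borel--Cantelli, since the depth $q_0(p)$ at which the $i^\ast$-run must occur is itself a function of $\omega$.
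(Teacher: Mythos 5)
Your proposal is correct, and its combinatorial core coincides with the paper's: your ``good'' configurations (a run of $j^\ast$ of length $L$ starting right after the vertical depth $p$, together with a run of $i^\ast$ of length $L'\approx L\log_{m_{i^\ast}}n_{j^\ast}$ starting right after the corresponding horizontal depth $q_0(p)$) are exactly the good set $G_{i,j}$ used in Section \ref{lowerboundcarpetsproof}, and you correctly identify the one genuinely delicate point, namely that $q_0(p)$ is a random (stopping) time so the second run cannot be placed deterministically. Where you diverge is in the two supporting steps. Probabilistically, the paper avoids conditional Borel--Cantelli by fixing a deterministic bound $\theta=\max_i\log n_i/\min_i\log m_i$ on the ratio $k_2^\omega/k_1^\omega$ and building blocks $K_n(m+1)=\theta K_n(m)+n$ inside which the two runs are guaranteed to sit, so that the events $E_n(m)$ are honestly independent and the classical second Borel--Cantelli lemma applies (Lemma \ref{affinefullmeasure}); your conditional/stopping-time version is equally valid but needs the care you flag about choosing the next $j^\ast$-run after the previous verification block. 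Geometrically, the paper does not count directly: it shows that the blown-up approximate squares converge in $d_{\mathcal H}$ to the product set $\pi_1(F_{\underline i})\times E_j$ (with $E_j$ the self-similar set spawned by the fullest column of $\mathbb{I}_j$), invokes Proposition \ref{weaktang00} to transfer the Assouad dimension of this very weak tangent back to $F_\omega$, and then uses the Hausdorff-dimension product inequality to evaluate it as $\frac{\log A_i}{\log m_i}+\frac{\log B_j}{\log n_j}$. Your direct count $N_{\delta'}(F_\omega\cap Q)\gtrsim A_{i^\ast}^{L'}B_{j^\ast}^{L}$ is a more elementary and self-contained route that avoids the weak-tangent machinery, at the cost of the bookkeeping needed to verify that the $A_{i^\ast}^{L'}$ refined columns and $B_{j^\ast}^{L}$ refined rows live at a common scale comparable to $\delta'$ (which your choice $L'=\lceil L\log_{m_{i^\ast}}n_{j^\ast}\rceil$ does ensure up to the bounded factor $m_{\max}m_{i^\ast}$); the tangent formulation buys a cleaner conceptual explanation of why the answer is the sum of two decoupled maxima, since the tangent is literally a product of a horizontal set coming from $\mathbb{I}_{i^\ast}$ and a vertical set coming from $\mathbb{I}_{j^\ast}$.
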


Theorem \ref{lowerboundcarpets} will be proved in Section \ref{lowerboundcarpetsproof}. As remarked above this is in stark contrast to results concerning the classical dimensions of attractors of RIFS, but still in keeping with the `almost surely maximal' philosophy.
An example of the `averaging' that happens with Hausdorff, packing and box counting dimension is the result by Gui and Li~\cite{Gui08}, where if $m_i=m<n = n_i$ for all $i \in \Lambda$, the almost sure dimension is given by the weighted average of the dimensions of the individual attractors:
\[
\dim F_\omega=\sum_{i\in\Lambda}p_i \dim F_{\underline i},
\]
where $\dim$ can refer to any of the Hausdorff, packing or box counting dimension.
  The key difference between the case considered here and the self-similar case is that, despite whatever separation conditions one wishes to impose, the `maximal value' is not  generally the maximum of the deterministic values.  We construct a very simple example to illustrate this difference in Section \ref{carpetscounterex}.
\\

The following two figures depict some examples of random self-affine Bedford-McMullen carpets.  The RIFS is made up of three deterministic IFSs, which are shown in Figure \ref{BMDeterministic}.  We chose $m_1 = 2$, $n_1=3$, $m_2 = 3$, $n_2=5$, $m_3 = 2$ and $n_3=4$ and indicate the chosen affine maps with rectangles.  In Figure \ref{BMRandom}, three different random realisations are shown.

\begin{figure}[H]
\setlength{\unitlength}{\textwidth}
\begin{picture}(1,0.28)
\put(0,0){\includegraphics[width=0.28\unitlength]{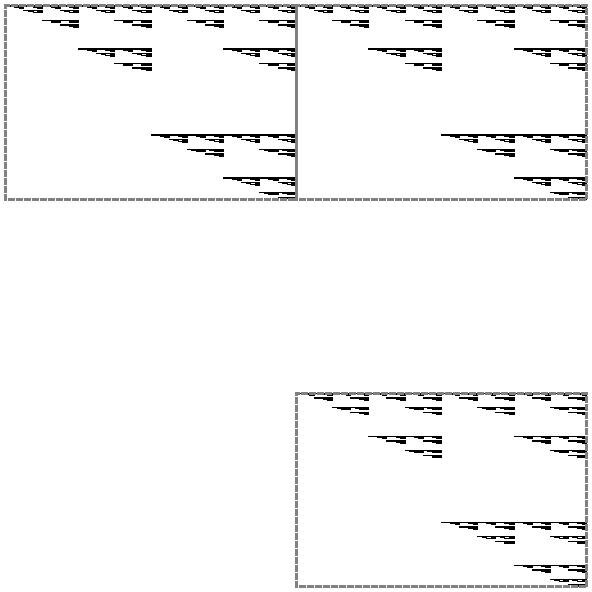}}
\put(0.35,0){\includegraphics[width=0.28\unitlength]{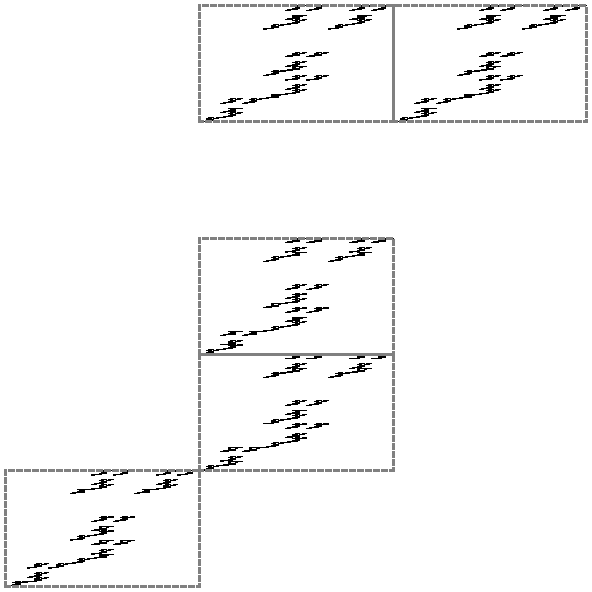}}
\put(0.7,0){\includegraphics[width=0.28\unitlength]{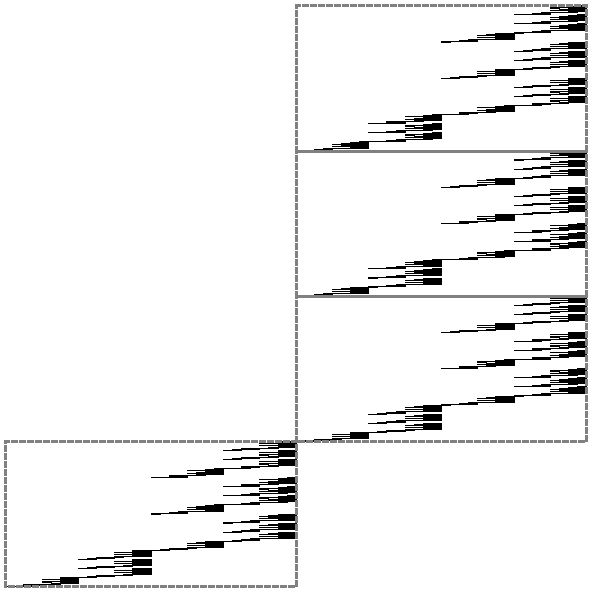}}
\end{picture}
\caption{Deterministic Bedford-McMullen Carpets $F_{\underline 1}$, $F_{\underline 2}$ and $F_{\underline 3}$.}
\label{BMDeterministic}
\end{figure}

\begin{figure}[H]
\setlength{\unitlength}{\textwidth}
\begin{picture}(1,0.28)
\put(0,0){\includegraphics[width=0.28\unitlength]{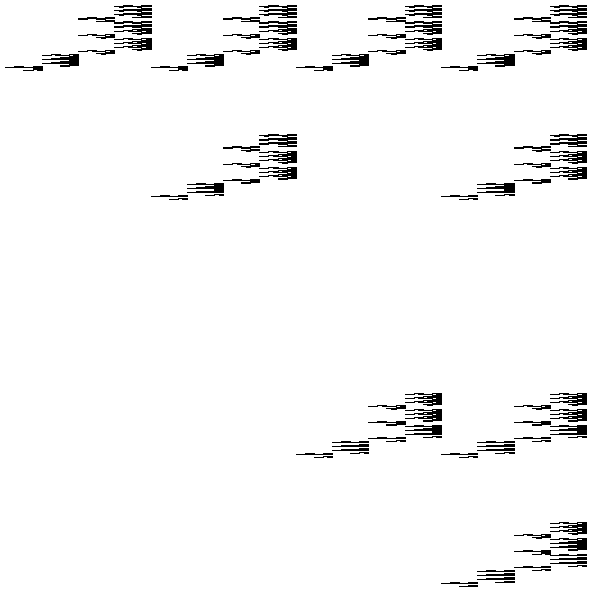}}
\put(0.35,0){\includegraphics[width=0.28\unitlength]{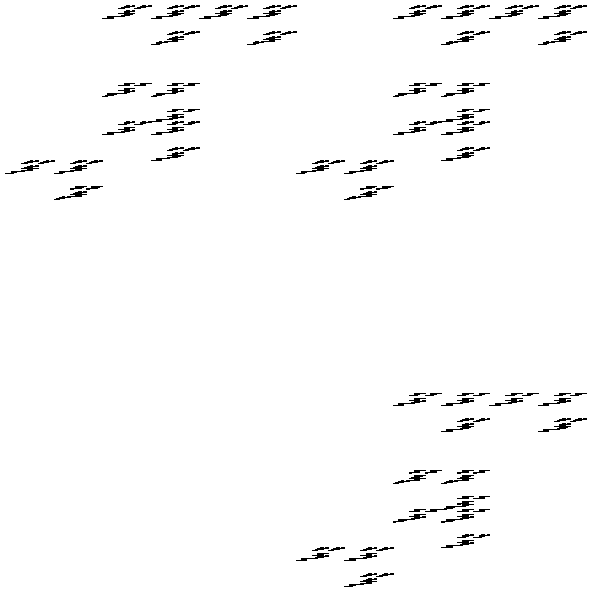}}
\put(0.7,0){\includegraphics[width=0.28\unitlength]{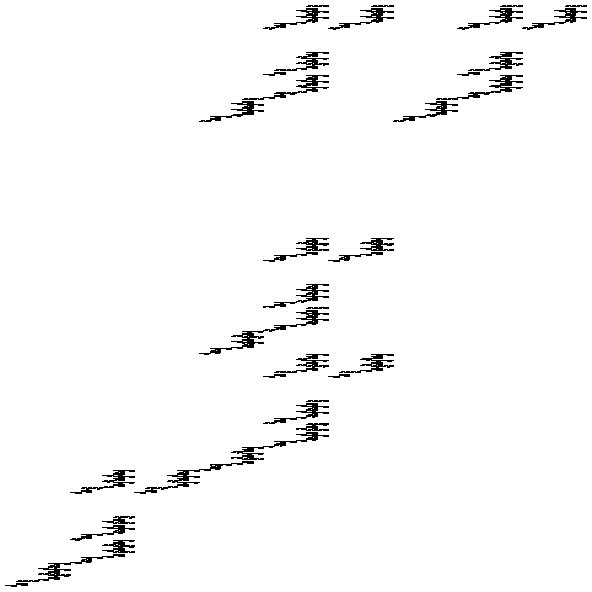}}
\end{picture}
\caption{Random Bedford-McMullen Carpets $F_{\alpha}$, $F_{\beta}$ and $F_{\gamma}$ for realisations $\alpha=(1,1,3,3,1,3,1,3,\hdots), \, \beta=(1,2,1,2,2,3,2,1,\hdots), \, \gamma=(2,2,3,2,1,2,2,2,\hdots) \in\Omega$.}
\label{BMRandom}
\end{figure}

\subsection{An example with larger Assouad dimension than expected}\label{carpetscounterex}

In this section we briefly elaborate on our belief that the formula for the almost sure Assouad dimension of random self-affine carpets returns a  surprisingly large value.  Consider the following very simple example.  Let $\mathbb{I} = \{\mathbb{I}_1, \mathbb{I}_2\}$, where $m=2$ and $n=3$ for both deterministic IFSs.  Let $\mathbb{I}_1$ consist of the maps corresponding to the two rectangles in the top row of the defining grid and let $\mathbb{I}_2$ consist of the maps corresponding to the three rectangles in the right hand column of the defining grid.  Both the deterministic attractors are not very interesting; they are both line segments.  In particular, they both have Assouad dimension equal to 1.  Moreover, it is a short calculation to show that the Assouad dimension of $F_\omega$ is no larger than 1 for any eventually periodic word $\omega$.  This means that, unlike the self-similar example in Section \ref{counterexample}, the Assouad dimension cannot increase by taking a finite combination of the initial IFSs.  However, Theorem \ref{lowerboundcarpets} shows that the Assouad dimension of $F_\omega$ is almost surely 2.

\begin{figure}[H]
\setlength{\unitlength}{\textwidth}
\centering
\includegraphics[width=1\unitlength]{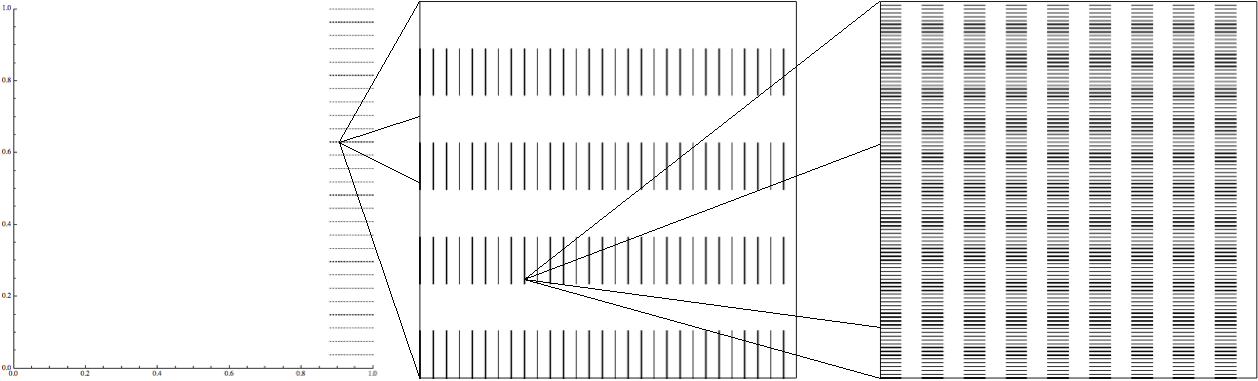}
\caption{The left most image is the random self-affine carpet associated to the above RIFS for the realisation $(3,3,3,1,1,1,1,3, \dots)$.  The other images show small parts of the set blown back up to the unit square.  One can see the zoomed in images filling up more and more space, leading to the unit square being a \emph{very weak tangent} to the random self-affine set.  This is what causes the Assouad dimension to be maximal, see Section \ref{lowerboundcarpetsproof}. }
\end{figure}

\section{Typical Assouad dimension for random attractors} \label{typsection}

In this section we consider an alternative approach to deciding the `generic properties' of random fractals.
This approach is topological rather than measure theoretic and was first considered by the first author \cite{Fraser12b}.
Let $(Y,d_Y)$ be a complete metric space.
A set $N \subseteq Y$ is \emph{nowhere dense} if for all $y \in N$ and for all $r>0$ there exists a point $x \in Y \setminus N$ and $t>0$ such that $B(x,t) \subseteq B(y,r) \setminus N.$  A set $M$ is said to be \emph{of the first category}, or, \emph{meagre}, if it can be written as a countable union of nowhere dense sets.
We think of a meagre set as being \emph{small} and the complement of a meagre set as being \emph{big}.
A set $T \subseteq Y$ is \emph{residual} or \emph{co-meagre}, if $Y \setminus T$ is meagre.
A property is called \emph{typical} if the set of points which have the property is residual.  In many ways a residual set behaves a lot like a set of full measure.  For example, the intersection of a countable number of residual sets is residual and the space cannot be broken up into the disjoint union of two sets which are both residual. As such it is a reasonable replacement for the notion of \emph{almost all} in describing generic properties in a complete metric space.  In Section \ref{proofs} we will use the following theorem to test for typicality without mentioning it explicitly.

\begin{thm}
	In a complete metric space, a set $T$ is residual if and only if $T$ contains a countable intersection of open dense sets or, equivalently, $T$ contains a dense $\mathcal{G}_\delta$ subset of $Y$.
\end{thm}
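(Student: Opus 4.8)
This statement is the standard topological characterisation of residual sets, and the plan is to run through the cycle of implications
\[
(\text{$T$ residual})\ \Longrightarrow\ \Big(T\supseteq\textstyle\bigcap_n U_n,\ \text{each }U_n\text{ open and dense}\Big)\ \Longrightarrow\ (\text{$T$ contains a dense }\mathcal{G}_\delta)\ \Longrightarrow\ (\text{$T$ residual}),
\]
together with the elementary observation that throughout I may freely replace any nowhere dense set by its closure and use that $N$ is nowhere dense if and only if $Y\setminus\overline{N}$ is open and dense. The point to keep in mind in advance is that the first and third arrows are purely set-theoretic and hold in any metric space, whereas the middle arrow is precisely where completeness of $(Y,d_Y)$ enters, via the Baire Category Theorem.

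First I would handle the first and third arrows. If $Y\setminus T$ is meagre, write $Y\setminus T=\bigcup_n N_n$ with each $N_n$ nowhere dense; then each $\overline{N_n}$ is closed with empty interior, so $U_n:=Y\setminus\overline{N_n}$ is open and dense, and $\bigcap_n U_n=Y\setminus\bigcup_n\overline{N_n}\subseteq Y\setminus\bigcup_n N_n=T$. Conversely, if $T$ contains a dense $\mathcal{G}_\delta$ set $G=\bigcap_n V_n$ with each $V_n$ open, then each $V_n\supseteq G$ is dense, so $Y\setminus V_n$ is closed with empty interior; hence $Y\setminus T\subseteq\bigcup_n(Y\setminus V_n)$ is a countable union of nowhere dense sets, so $T$ is residual. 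This closes the loop except for one missing ingredient: that the set $\bigcap_n U_n$ produced in the first step is not merely $\mathcal{G}_\delta$ but \emph{dense}.

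That missing ingredient is the substantive part. Being $\mathcal{G}_\delta$ is immediate; density of $\bigcap_n U_n$ is exactly the Baire Category Theorem for the complete metric space $(Y,d_Y)$, which I would either cite or prove by the usual nested-ball argument: given any nonempty open $W$, use density of $U_1,U_2,\dots$ in turn to choose open balls with $\overline{B(x_{k+1},r_{k+1})}\subseteq B(x_k,r_k)\cap U_{k+1}$ and $r_k\to 0$, starting from $\overline{B(x_1,r_1)}\subseteq W\cap U_1$; completeness makes the sequence of centres $(x_k)$ convergent, and its limit lies in $W\cap\bigcap_n U_n$, so $\bigcap_n U_n$ meets every nonempty open set. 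Since $\bigcap_n U_n\subseteq T$, this supplies the middle arrow, and the three-implication cycle shows that all three conditions are equivalent.

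The only genuine obstacle is the Baire Category Theorem, which is routine given completeness; everything else is bookkeeping with complements. I would finish by noting that the same argument also yields the ``equivalently'' in the statement: a countable intersection of open dense sets is a dense $\mathcal{G}_\delta$ by the Baire Category Theorem, while conversely any dense $\mathcal{G}_\delta$ set $\bigcap_n V_n$ has each $V_n$ dense and so is itself a countable intersection of open dense sets, so the two phrasings of the characterisation coincide.
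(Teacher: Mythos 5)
Your proof is correct: the paper itself gives no proof of this theorem, deferring to Oxtoby, and your three-implication cycle --- with the Baire Category Theorem supplying the only non-trivial step, namely the density of $\bigcap_n U_n$, which is exactly where completeness of $(Y,d_Y)$ enters --- is the standard argument found there. The only point worth a moment's care is that the paper's definition of nowhere dense quantifies only over $y \in N$ rather than all $y \in Y$, but this is equivalent to requiring $\overline{N}$ to have empty interior (an open ball disjoint from $N$ is automatically disjoint from $\overline{N}$, and if $\overline{N}$ had interior one could find $y \in N$ violating the condition), so your replacement of each $N_n$ by $\overline{N_n}$ is legitimate.
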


For a proof of this result and for a more detailed account of Baire Category the reader is referred to \cite{oxtoby}.
By applying these notions to the complete metric space $(\Omega, d)$ we can replace ``full measure'' with ``residual'' to gain our new notion of genericity.
In \cite{Fraser12b} it was shown that these two approaches differ immensely in the context of Hausdorff and packing dimension.
Indeed, it was shown that there exists a residual set $R\subseteq \Omega$ such that for all $\omega\in R$
\[
\dim_\H F_\omega \ = \ \inf_{u \in \Omega} \dim_\H F_u,
\]
and
\[
\dim_\P F_\omega \ = \ \sup_{u \in \Omega} \dim_\P F_u,
\]
for any RIFS consisting of bi-Lipschitz contractions without assuming any separation conditions.
This is very different from the measure theoretic approach, which tends to favour convergence rather than divergence, with the almost sure packing and Hausdorff dimensions often equal to some sort of average over the parameter space, rather than opposite extremes. Our main result in this section proves an analogous result for Assouad dimension. In the wider context of the paper, the main interest of this result is that in the setting of Assouad dimension, the topological and measure theoretic approaches seem to agree. Observe that we are able to compute the \emph{typical} Assouad dimension in a much more general context than the \emph{almost sure} Assouad dimension, but this is not surprising in view of \cite{Fraser12b}.

\begin{thm} \label{baireassouad}
	Let $\mathbb{I}$ be an RIFS consisting of deterministic IFSs of bi-Lipschitz contractions. Then there exists a residual set $R\subseteq \Omega$ such that for all $\omega\in R$
	\[
	\dim_\text{\emph{A}} F_\omega \ = \ \sup_{u \in \Omega} \,  \dim_\text{\emph{A}} F_u.
	\]
\end{thm}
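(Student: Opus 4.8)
The plan is to exhibit a dense $\mathcal{G}_\delta$ subset of $\Omega$ on which the Assouad dimension attains the supremum $s := \sup_{u \in \Omega} \dim_{\text{A}} F_u$. The upper bound $\dim_{\text{A}} F_\omega \le s$ is not available for all $\omega$ in general (the Assouad dimension can ``jump up'', as the carpet examples show), so unlike in the self-similar theorem we should \emph{not} aim for a sure upper bound. Instead the residual set $R$ will be contained in $\{\omega : \dim_{\text{A}} F_\omega \ge s\}$, and separately we must argue that the reverse inequality $\dim_{\text{A}} F_\omega \le s$ also holds on a residual set, or — more likely — that $R$ can be chosen directly inside $\{\omega : \dim_{\text{A}} F_\omega = s\}$ by combining the lower-bound construction with the observation that the upper bound $\dim_{\text{A}} F_\omega \le s$ \emph{does} hold on a residual set. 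The cleanest route for the upper bound: the set of $\omega$ whose tail $\sigma^n \omega$ fails to have any prescribed long pattern is open dense, so generically every finite word appears in $\omega$, but also generically no ``bad arithmetic coincidence'' of the kind creating extra overlap occurs — however this is delicate, so I expect the actual argument matches the lower bound's mechanism exactly: one shows $\dim_{\text{A}} F_\omega$ equals $s$ precisely when a certain prefix-insertion condition holds, which is residual.

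For the lower bound, the key tool is the characterisation of Assouad dimension via \emph{weak tangents} (Furstenberg-style micro-sets / the Mackay--Tyson weak tangent theorem): if some set $T$ is a weak tangent to $F_\omega$, then $\dim_{\text{A}} F_\omega \ge \dim_{\text{A}} T$, and in fact $\ge \overline{\dim}_{\text{B}} T$. So the strategy is: for each $u \in \Omega$ and each $\varepsilon > 0$, pick $n$ large so that a suitable piece of the deterministic-style construction word $u|_n$ realises Assouad dimension at least $\dim_{\text{A}} F_u - \varepsilon$ at some scale. Then, for \emph{any} $\omega$ whose sequence of prefixes contains infinitely many disjoint copies of $u|_n$ (at arbitrarily deep positions $k_j \to \infty$), the corresponding cylinder pieces $S_{\omega_1,i_1}\circ\cdots\circ S_{\omega_{k_j}, i_{k_j}}(X)$ contain rescaled near-copies of the relevant part of $F_u$; rescaling these back to unit size and passing to a subsequential Hausdorff limit produces a weak tangent to $F_\omega$ of Assouad (indeed box) dimension $\ge \dim_{\text{A}} F_u - \varepsilon$. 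Hence $\dim_{\text{A}} F_\omega \ge \dim_{\text{A}} F_u - \varepsilon$ for all such $\omega$.

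The residual set is then built by a standard Baire-category diagram argument. Fix a countable dense subset $\{u^{(\ell)}\}_{\ell \in \N}$ of $\Omega$ with $\sup_\ell \dim_{\text{A}} F_{u^{(\ell)}} = s$ (possible since $\dim_{\text{A}} F_u$ is, if not continuous, at least such that the sup over a dense set recovers $s$ — this needs a small lemma, e.g. lower semicontinuity of $u \mapsto \dim_{\text{A}} F_u$, or directly that prefixes determine the sup). For each $\ell$ and each $N \in \N$, let
\[
E_{\ell, N} \ = \ \bigcup_{w \in \Lambda^*,\ |w| \ge N} C_{|w| + |u^{(\ell)}|}\big(w\, u^{(\ell)}|_{n_\ell}\big),
\]
i.e. all realisations that contain the block $u^{(\ell)}|_{n_\ell}$ starting at some depth $\ge N$. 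Each $E_{\ell,N}$ is open (a union of cylinders) and dense (any finite word can be extended by appending that block far enough along). Set $R = \bigcap_{\ell, N} E_{\ell, N}$, a dense $\mathcal{G}_\delta$. For $\omega \in R$ and every $\ell$, the block $u^{(\ell)}|_{n_\ell}$ occurs in $\omega$ at depths tending to infinity, so the weak-tangent argument gives $\dim_{\text{A}} F_\omega \ge \dim_{\text{A}} F_{u^{(\ell)}} - \varepsilon$ for all $\ell$ and all $\varepsilon$; taking the sup over $\ell$ yields $\dim_{\text{A}} F_\omega \ge s$. Combined with $\dim_{\text{A}} F_\omega \le s$ (trivially, since $F_\omega$ is itself an attractor $F_u$ for $u = \omega$, giving $\dim_{\text{A}} F_\omega \le \sup_{u} \dim_{\text{A}} F_u = s$ — this inequality \emph{is} sure!), we conclude $\dim_{\text{A}} F_\omega = s$ on $R$.

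The main obstacle is the weak-tangent step: making rigorous that a deeply-nested cylinder image, after rescaling by its (non-conformal, merely bi-Lipschitz) contraction, converges in the Hausdorff metric to a genuine weak tangent whose box dimension is controlled from below by $\dim_{\text{A}} F_u - \varepsilon$. One must handle the bi-Lipschitz (rather than similarity) distortion carefully — the rescaled pieces are only uniformly bi-Lipschitz images of pieces of $F_u$, so one needs that Assouad dimension, and the relevant lower box-counting estimates at the chosen scale, survive uniform bi-Lipschitz perturbation, plus a compactness argument to extract the limit. A secondary subtlety is the preliminary lemma that $\sup_u \dim_{\text{A}} F_u$ is approached along a countable dense set of $u$'s via finite prefixes; this should follow because the definition of $\dim_{\text{A}} F_u$ only ``sees'' finitely many coordinates of $u$ at each pair of scales $r < R$, so truncating $u$ and then perturbing within a small cylinder changes $\dim_{\text{A}} F_u$ by at most $\varepsilon$.
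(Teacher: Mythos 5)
Your overall Baire skeleton is right, and your observation that the upper bound $\dim_{\text{A}} F_\omega \le s$ is \emph{sure} (because $F_\omega$ is itself one of the $F_u$) is exactly why the paper only needs to prove that $A=\{\omega : \dim_{\text{A}}F_\omega \ge s\}$ is residual. But the lower bound has a genuine gap at the step you dismiss as a ``secondary subtlety'': the occurrence of a \emph{finite} block $u^{(\ell)}|_{n_\ell}$ deep inside $\omega$ does not force $\dim_{\text{A}}F_\omega \ge \dim_{\text{A}}F_{u^{(\ell)}}-\varepsilon$. Knowing that $\sigma^k\omega$ agrees with $u$ on $n$ coordinates only pins $F_{\sigma^k\omega}$ (and hence any blown-up tangent) to within Hausdorff distance $O(c_{\max}^n)$ of $F_u$, and Hausdorff proximity gives no lower bound whatsoever on Assouad or upper box dimension. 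Nor is $u\mapsto\dim_{\text{A}}F_u$ lower semicontinuous: to witness $\dim_{\text{A}}F_u>t$ against every constant $C$ one needs scale ratios $R/r\to\infty$, hence unboundedly many coordinates of $u$. The paper's own Section 2.2 example makes this concrete: $\dim_{\text{A}}F_{(1,2,1,2,\dots)}=1$, yet every word agreeing with $(1,2,1,2,\dots)$ on an arbitrarily long prefix and then continuing with $1$'s has Assouad dimension at most $s_1\approx 0.81$. So membership in your open dense sets $E_{\ell,N}$ does not yield the claimed inequality. A second problem is that Proposition \ref{weaktang00} requires the rescaling maps to have uniformly bounded distortion $b_k/a_k$; for compositions of general bi-Lipschitz contractions this ratio can grow exponentially in $k$, so the weak tangent machinery is not available in the generality of Theorem \ref{baireassouad}. (And the natural repair --- demanding that an entire tail $\sigma^k\omega$ equal $u^{(\ell)}$ --- defines a countable, hence meagre, set, so it cannot serve as one of your open dense sets.)

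The paper threads this needle differently. Density of $A_n=\{\omega : \dim_{\text{A}}F_\omega\ge s-1/n\}$ is proved by replacing the whole tail: $v=(\omega_1,\dots,\omega_k,u_1,u_2,\dots)$ satisfies $F_v\supseteq S_{\omega_1,j_1}\circ\dots\circ S_{\omega_k,j_k}(F_u)$, a bi-Lipschitz copy of the \emph{entire} attractor $F_u$, whence $\dim_{\text{A}}F_v\ge\dim_{\text{A}}F_u$ with no $\varepsilon$ loss and no tangent needed. Since these witnesses form a meagre set, residuality cannot come from them directly; instead the paper shows that $A$ and each $A_n$ are $\mathcal{G}_\delta$ abstractly, by writing them as countable intersections of unions of preimages of open rays under $\Xi=M_r\circ\beta^0_{x,R}\circ\Psi$, using continuity of $\omega\mapsto F_\omega$ together with lower semicontinuity of the packing count $M_r(B^0(x,R)\cap\,\cdot\,)$ on $(\mathcal{K}(X),d_{\mathcal{H}})$. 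A dense $\mathcal{G}_\delta$ set is residual, which closes the argument. This combination of semicontinuity and tail replacement is the ingredient missing from your proposal.
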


We will prove Theorem \ref{baireassouad} in Section \ref{baireassouadproof}.
Notice that the above result assumes no separation properties and the mappings can be much more general than similarities or even affine maps.
\\ \\
An immediate and perhaps surprising corollary of this is that Theorems \ref{SSresult} and \ref{lowerboundcarpets} remain true even if the measure theoretic approach is replaced by the topological approach adopted in this section.

\section{Fractal Percolation}\label{percSect}

Fractal percolation or Mandelbrot percolation, first appearing in the works of Mandelbrot in the 1970s as a model for intermittent turbulence \cite{Mandelbrot74}, is one of the most well studied and famous examples of a random fractal and is defined as follows.
Begin with the unit cube $Q = [0,1]^d$ in $\mathbb{R}^d$ and fix an integer $n\geq 2$ and a probability $p \in (0,1)$.
Divide the unit cube into a mesh of $n^d$ smaller compact cubes each having side lengths $1/n$.
Now choose to keep each smaller square independently with probability $p$.
The result is a compact collection of cubes, which we call $Q_1$.
Now repeat this process independently with each surviving cube from the first iteration to form another collection of cubes this time of side lengths $1/n^2$, which we denote by $Q_2$.
Repeating this process infinitely many times gives a decreasing sequence of compact collections of increasingly smaller cubes, $Q_k$.
The resulting random set, or fractal percolation, is then defined as
\[
F \ =  \ \bigcap_{k \in \mathbb{N}} Q_k.
\]

This construction has been studied intensively over the last 40 years, with many interesting phenomena being observed.  Initially, most work concerned the classical question of `percolation', namely, is there a positive probability that one face of $Q$ is connected by $F$ to the opposite face?  More recently, a lot of work has been done on generic dimensional properties of $F$, orthogonal (and other) projections of $F$, and slices of $F$.  Rather than cite many papers we simply refer the reader to the recent survey by Rams and Simon \cite{Rams14}.  Concerning the dimension of $F$, if $p>1/n^d$ then there is a positive probability that $F$ is nonempty and conditioned on this occurring, the Hausdorff and packing dimension of $F$ are almost surely given by $\log n^d p / \log n$. Recently, there has also been a lot of work on almost sure properties of the orthogonal projections of fractal percolation. In particular, one wants to obtain a `Marstrand type result' for \emph{all} projections $\pi \in \Pi_{d,k}$ rather than just \emph{almost all}.  Here $\Pi_{d,k}$ is the Grassmannian manifold consisting of all orthogonal projections from $\mathbb{R}^d$ to $\mathbb{R}^k$ ($k \leq d$) identified with $k$ dimensional subspaces of $\mathbb{R}^d$ in the natural way and equipped with the usual Grassmann measure.  Our main result is the following, which gives, conditioned on non-extinction, the almost sure Assouad dimension of $F$ as well as an optimal projection result.

\begin{thm} \label{percolationassouad}
	Let $p>1/n^d$. Then, conditioned on $F$ being nonempty, we have that almost surely
	\[
	\dim_\text{\emph{A}} F = d
	\]
and for all $k \leq d$ and $\pi \in \Pi_{d,k}$ simultaneously,
	\[
	\dim_\text{\emph{A}} \pi F = k.
	\]
\end{thm}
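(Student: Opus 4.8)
The plan is to exploit the fact that the Assouad dimension detects the ``thickest'' possible blow-up of the set, and that in fractal percolation, conditioned on non-extinction, one can find — with probability one — arbitrarily deep sub-constructions that are essentially the full cube of cubes. The key quantitative fact we need is the following: for any $\epsilon > 0$, almost surely there exist infinitely many ``windows'' in the construction tree, that is, pairs $(k, \text{node})$, such that inside a $k$-level cube the surviving pattern for the next $\ell$ generations is the \emph{full} $n^{d\ell}$-pattern, where $\ell \to \infty$ along these windows. This is a Borel--Cantelli type statement: the probability that a given $k$-level surviving cube is followed by $\ell$ full generations is $p^{(n^{d\ell+\cdots+n^{d}})}$, which is tiny but positive, and because (conditioned on non-extinction) the number of surviving cubes at level $k$ grows like $(n^d p)^k$ with $n^d p > 1$, a second-moment / martingale argument shows that for each fixed $\ell$ such a full window appears somewhere almost surely, and then a diagonal argument over $\ell = 1, 2, 3, \dots$ gives windows of unbounded depth.

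With this in hand, the lower bound $\dim_{\text{A}} F \geq d$ follows directly from the definition: inside such a window of depth $\ell$, rescaled back to the unit cube, $F$ contains a full $1/n^\ell$-grid of surviving cubes, so taking $R$ comparable to the window cube's side length and $r = R/n^\ell$ we get $N_r(B(x,R) \cap F) \gtrsim n^{d\ell} = (R/r)^d$, and since $\ell$ is unbounded this forces $\dim_{\text{A}} F \geq d$. The matching upper bound $\dim_{\text{A}} F \leq d$ is trivial since $F \subseteq [0,1]^d \subseteq \R^d$ and the Assouad dimension of any subset of $\R^d$ is at most $d$. This gives the first assertion.

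For the projection statement, the upper bound $\dim_{\text{A}} \pi F \leq k$ is again automatic because $\pi F \subseteq \pi([0,1]^d)$, a bounded subset of a $k$-dimensional space, and Lipschitz images (projections are $1$-Lipschitz) cannot increase... wait — they can for Assouad dimension only via the bound $\dim_{\text{A}} \pi F \leq \dim_{\text{A}} F$, but here we just use $\pi F \subseteq \R^k$ giving $\dim_{\text{A}} \pi F \leq k$ directly. For the lower bound we reuse the full windows: inside a window of depth $\ell$, rescaled, $F$ contains the full grid $G_\ell$ of $n^{d\ell}$ cubes of side $n^{-\ell}$, hence $\pi F$ restricted to (the rescaled copy of) that window contains $\pi(G_\ell)$, which covers a $c\cdot n^{-\ell}$-neighbourhood of $\pi([0,1]^d)$ in $\R^k$ and therefore needs at least $\gtrsim n^{k\ell}$ sets of diameter $n^{-\ell}$ to cover it. Crucially, this estimate is \emph{uniform in $\pi$}: the same full grid works for every $\pi \in \Pi_{d,k}$ at once, because $\pi$ of a full cube-grid is always a full grid of the projected cube up to a dimensional constant. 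Thus on the single almost-sure event that full windows of unbounded depth exist, we get $\dim_{\text{A}} \pi F \geq k$ for all $\pi$ simultaneously, completing the proof.

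The main obstacle is establishing the ``full window'' lemma rigorously — specifically, showing that conditioned on non-extinction one almost surely sees, for every $\ell$, a level-$k$ surviving cube whose descendants for $\ell$ further generations form the complete pattern. The subtlety is the conditioning on non-extinction (which correlates with having many surviving cubes) and the fact that the relevant events at different nodes of the tree are not independent in a naive way; the cleanest route is to work on the event $\{F \neq \varnothing\}$, use that the number of level-$k$ survivors tends to infinity almost surely on this event (a consequence of the supercritical Galton--Watson structure with mean $n^d p > 1$), and then apply a conditional Borel--Cantelli argument generation by generation, since given the configuration at level $k$, the sub-trees below distinct survivors are independent and each has the positive probability $p^{N_\ell}$ (with $N_\ell = n^{d} + n^{2d} + \dots + n^{\ell d}$) of being full for the next $\ell$ levels. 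Provided the number of survivors at level $k$ grows fast enough to beat $p^{-N_\ell}$ along a subsequence of $k$'s — which it does, exponentially versus the fixed constant $p^{-N_\ell}$ — the event occurs infinitely often, and diagonalising over $\ell$ finishes it.
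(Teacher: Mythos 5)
Your overall architecture is sound and close in spirit to the paper's: the paper also locates, via Borel--Cantelli, sub-constructions of unbounded depth in which the \emph{complete} pattern appears, and uses these to force the Assouad dimension up to $d$ (formalised there through weak tangents converging to $[0,1]^d$, whereas you work directly with the covering definition --- the two are equivalent here). Your routes differ in two harmless ways: the paper runs its Borel--Cantelli argument along the spine of a single fixed point $x\in F$, using independence of disjoint blocks of generations on that spine, while you exploit the fact that the number of survivors at level $k$ tends to infinity on non-extinction and that the subtrees below distinct survivors are independent; and for the projections the paper uses $F\subseteq \pi F\times\pi^{\perp}$ together with the product inequality $\dim_{\text{A}}(A\times B)\leq\dim_{\text{A}}A+\dim_{\text{A}}B$, which is slicker than your direct grid-projection count but no more general. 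Both of your substitutions can be made rigorous.

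There is, however, one genuine gap. Your ``full window'' event only asks that the pattern below a level-$k$ node be complete for the next $\ell$ generations, i.e.\ that all $n^{d\ell}$ descendant cubes \emph{survive to level $k+\ell$}. That does not imply that $F$ meets each of those cubes: any of them may become extinct at later generations, and then it contributes nothing to $F$. So the assertion ``rescaled back to the unit cube, $F$ contains a full $1/n^{\ell}$-grid of surviving cubes,'' and hence the bound $N_r(B(x,R)\cap F)\gtrsim n^{d\ell}$, does not follow from your lemma as stated; the same defect propagates into the projection argument, where you need a point of $F$ (not merely a surviving finite-level cube) in each grid cell to get $\pi F$ to be $c\,n^{-\ell}$-dense in $\pi Q$. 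The fix is exactly the one the paper builds in: strengthen the window event to require additionally that each of the $n^{d\ell}$ cubes at the bottom of the window is non-extinct in the limit. Writing $p_{\neg e}>0$ for the conditional probability that a surviving cube does not die out (positive because $n^{d}p>1$), the window probability becomes $p^{N_\ell}\,p_{\neg e}^{\,n^{d\ell}}$, which is still a fixed positive constant for each $\ell$, so your Borel--Cantelli/second-moment argument goes through unchanged. With that amendment your proof is correct.
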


We will prove Theorem \ref{percolationassouad} in Section \ref{percolationassouadproof}.
Observe that, provided $p>1/n^d$, the almost sure Assouad dimension does not depend on $p$.
This is in stark contrast to the Hausdorff and packing dimension case, but by now not surprising to us.  An immediate corollary of Theorem \ref{percolationassouad} is the following embedding theorem for Mandelbrot percolation.
\begin{cor}
	Let $p>1/n^d$. Then, conditioned on $F$ being nonempty, almost surely $F$ cannot be embedded in any lower dimensional Euclidean space via a bi-Lipschitz map, i.e., there does not exists a bi-Lipschitz map $\phi:F \to \mathbb{R}^{d-1}$.
\end{cor}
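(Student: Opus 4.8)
The plan is to obtain the corollary as an essentially immediate consequence of Theorem~\ref{percolationassouad}, using only the bi-Lipschitz invariance of the Assouad dimension. First I would record the two standard facts I need. (i) If $\phi$ is a bi-Lipschitz bijection between two metric spaces $A$ and $B$, then $\dim_\text{A} A = \dim_\text{A} B$; this is immediate from the covering-number definition of $\dim_\text{A}$ given in the introduction, since a bi-Lipschitz map changes diameters, and hence the quantities $N_r(B(x,R)\cap A)$, by at most a fixed multiplicative constant, which is harmlessly absorbed into the constant $C$ and a bounded rescaling of the parameters $r$ and $R$. Note this applies verbatim to a bi-Lipschitz map defined only on the subset $F \subseteq \mathbb{R}^d$, viewed as a bijection onto its image. (ii) For every $E \subseteq \mathbb{R}^{d-1}$ we have $\dim_\text{A} E \le d-1$: a ball of radius $R$ in $\mathbb{R}^{d-1}$ can be covered by $O\big((R/r)^{d-1}\big)$ sets of diameter $r$, so $\dim_\text{A}\mathbb{R}^{d-1} = d-1$, and the Assouad dimension is monotone under inclusion.

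Next I would invoke Theorem~\ref{percolationassouad}: conditioned on $F \neq \varnothing$, there is a full-probability event on which $\dim_\text{A} F = d$. Working on this event, I would argue by contradiction: suppose there were a bi-Lipschitz map $\phi : F \to \mathbb{R}^{d-1}$. Then $\phi$ is a bi-Lipschitz bijection from $F$ onto $\phi(F) \subseteq \mathbb{R}^{d-1}$, so combining (i) and (ii),
\[
d \ = \ \dim_\text{A} F \ = \ \dim_\text{A} \phi(F) \ \le \ d-1,
\]
which is absurd. Hence, on this event, no such $\phi$ exists, which is exactly the statement of the corollary.

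There is no genuine obstacle here: the whole mathematical content sits in Theorem~\ref{percolationassouad}, and the only step requiring (entirely routine) care is the verification of fact~(i) for bi-Lipschitz maps defined on an arbitrary subset, which follows directly from the definition of $\dim_\text{A}$. If desired, one could add a sentence noting that the same reasoning shows more generally that $F$ cannot be bi-Lipschitz embedded into any metric space of Assouad dimension strictly less than $d$.
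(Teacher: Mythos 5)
Your argument is correct and is essentially the paper's own proof: the paper likewise deduces the corollary immediately from Theorem~\ref{percolationassouad} together with the fact that bi-Lipschitz maps cannot decrease Assouad dimension (citing Luukkainen), combined with $\dim_\text{A}\mathbb{R}^{d-1}=d-1$. Your only (harmless) extra is verifying the bi-Lipschitz invariance from the definition rather than citing it.
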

This follows from Theorem \ref{percolationassouad} and the fact that bi-Lipschitz maps cannot decrease Assouad dimension \cite[Theorem A.5.1]{Luukkainen98}.  This result is somewhat surprising in that given any $\varepsilon>0$, one can choose $p$ sufficiently close to (but greater than) $n^{-d}$, such that almost surely (conditioned on non-extinction) the Hausdorff dimension of $F$ is smaller than $\varepsilon$, but yet $F$ still cannot be embedded in any Euclidean space with dimension less than that of the initial ambient space.

\begin{figure}[H]
\setlength{\unitlength}{\textwidth}
\begin{picture}(1,0.35)
\put(0.1,0){\includegraphics[width=0.35\unitlength]{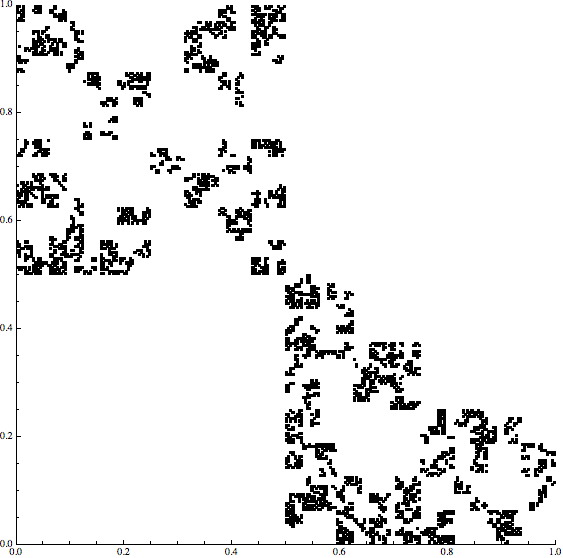}}
\put(0.55,0){\includegraphics[width=0.35\unitlength]{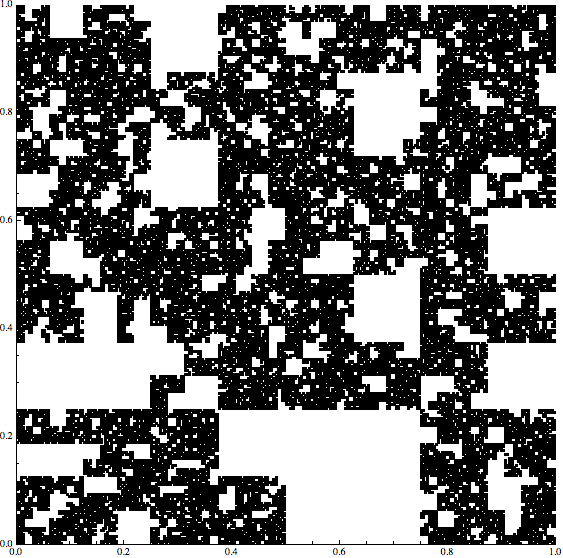}}
\end{picture}
\caption{Percolation limit sets for $p=0.7$ and $p=0.9$ ($n=2$, $d=2$).}
\label{Percolation}
\end{figure}

\newpage

\section{Proofs}\label{proofs}

\subsection{Weak tangents}

One of the most important and convenient ways to estimate the Assouad dimension of a given set from below is to construct \emph{weak tangents}.
This approach was introduced and studied to great effect by Mackay and Tyson \cite{mackaytyson}, and we will utilise it again here.  Let $d_\mathcal{H}$ denote the Hausdorff metric on the space of compact subsets of $\mathbb{R}^d$, defined by
\[
d_\mathcal{H}(A,B) \ = \  \inf \big\{ \varepsilon \geq 0: A \subseteq [B]_{\varepsilon} \text{ and } B \subseteq [A]_{\varepsilon} \big\}
\]
where $[A]_{\varepsilon}$ is the closed $\varepsilon$-neighbourhood of a compact set $A \subseteq \mathbb{R}^d$.
\begin{prop}[Mackay-Tyson] \label{weaktang00}
	Let $X \subset \mathbb{R}^n$ be compact and let $F$ be a compact subset of $X$.  Let $T_k$ be a sequence of bi-Lipschitz maps defined on $\mathbb{R}^n$ with Lipschitz constants $a_k,b_k \geq 1$ such that
\[
a_k \lvert x-y\rvert\  \leq \ \lvert T_k(x) - T_k(y) \rvert \  \leq \  b_k \lvert x-y \rvert \qquad (x,y \in \mathbb{R}^n)
\]
and
\[
\sup_k \, b_k/a_k \ = \ C_0 \ < \  \infty
\]
and suppose that $T_k(F) \cap X \to_{d_\mathcal{H}} \hat{F}$.  Then
\[
\dim_\text{\emph{A}} \hat{F} \  \leq \   \dim_\text{\emph{A}} F.
\]
	The set $\hat{F}$ is called a \emph{very weak tangent} to $F$.
\end{prop}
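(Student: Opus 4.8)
The plan is to show that every exponent $s>\dim_{\text{A}}F$ is also admissible for $\hat F$; letting $s\downarrow\dim_{\text{A}}F$ then gives the claim (and there is nothing to prove if $\dim_{\text{A}}F=\infty$). So I fix $s>\dim_{\text{A}}F$, take constants $C,\rho>0$ witnessing it for $F$, and aim to produce $C',\rho'>0$ with $\sup_{z\in\hat F}N_r\big(B(z,R)\cap\hat F\big)\le C'(R/r)^s$ for all $0<r<R\le\rho'$. The underlying idea is that each $T_k$ is a uniform blow-up, expanding by at least $a_k\ge 1$, so a covering estimate for $F$ transfers to one for $T_k(F)$, and the Hausdorff convergence $T_k(F)\cap X\to_{d_\mathcal{H}}\hat F$ then pushes it onto $\hat F$.

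Concretely, given $z\in\hat F$ and $0<r<R\le\rho'$, I would set $\varepsilon=r/4$ and use the convergence to pick $k$ with $d_\mathcal{H}(T_k(F)\cap X,\hat F)<\varepsilon$. Then every point of $B(z,R)\cap\hat F$ lies within $\varepsilon$ of a point of $T_k(F)\cap X$, and such a point necessarily lies in $B(z,R+\varepsilon)\cap T_k(F)$; hence $B(z,R)\cap\hat F$ is contained in the closed $\varepsilon$-neighbourhood of $B(z,R+\varepsilon)\cap T_k(F)$. Assuming this latter set is nonempty (otherwise $B(z,R)\cap\hat F=\varnothing$ and there is nothing to check), choose a basepoint $T_k(x_0)$ in it with $x_0\in F$; by the lower Lipschitz bound, every $x\in F$ with $T_k(x)\in B(z,R+\varepsilon)$ satisfies $|x-x_0|\le 2(R+\varepsilon)/a_k=:R_k$, so $B(z,R+\varepsilon)\cap T_k(F)\subseteq T_k\big(B(x_0,R_k)\cap F\big)$. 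Now I cover $B(x_0,R_k)\cap F$ by $N:=N_{r_k}\big(B(x_0,R_k)\cap F\big)$ sets of diameter at most $r_k:=(r-2\varepsilon)/b_k$, apply $T_k$ (which expands diameters by at most $b_k$) and fatten each piece by $\varepsilon$, thereby covering $B(z,R)\cap\hat F$ by $N$ sets of diameter at most $b_k r_k+2\varepsilon=r$.

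The remaining bookkeeping is routine. One checks $r_k<R_k$ automatically, since $b_k\ge a_k$ and $r-2\varepsilon<R+\varepsilon$; and one takes $\rho'$ to be a fixed fraction of $\rho$ — for instance $\rho'=2\rho/5$ — so that $R_k\le 2(R+\varepsilon)\le\rho$, which lets the estimate for $F$ be applied at the scales $r_k<R_k\le\rho$. It yields
\[
N_r\big(B(z,R)\cap\hat F\big)\ \le\ N\ \le\ C\Big(\tfrac{R_k}{r_k}\Big)^s\ =\ C\Big(\tfrac{b_k}{a_k}\cdot\tfrac{2(R+\varepsilon)}{r-2\varepsilon}\Big)^s\ \le\ C\,(5C_0)^s\Big(\tfrac{R}{r}\Big)^s,
\]
using $b_k/a_k\le C_0$ and $\varepsilon=r/4<R$. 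Since $z\in\hat F$ and $0<r<R\le\rho'$ were arbitrary, this gives $\dim_{\text{A}}\hat F\le s$, and letting $s\downarrow\dim_{\text{A}}F$ finishes the proof.

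I expect the one genuine obstacle to be controlling the intermediate (blown-down) scale $R_k$: since we do not assume $a_k\to\infty$, the radius $R_k$ need not be small, so one cannot invoke the Assouad estimate for $F$ at an arbitrarily fine scale — this is precisely why $\rho'$ must be a fixed fraction of $\rho$ and why $\varepsilon$ is kept comparable to $r$ rather than sent to zero on its own. A secondary, purely cosmetic point is that $T_k$ need not be a bijection of $\mathbb{R}^n$, which is why one confines the relevant $F$-preimages to a ball using a basepoint inside $B(z,R+\varepsilon)\cap T_k(F)$ together with the lower Lipschitz bound, rather than literally inverting $T_k$ on a ball.
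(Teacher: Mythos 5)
Your proof is correct, and it is essentially the standard argument for this proposition: the paper itself does not prove it but cites Mackay--Tyson and the bi-Lipschitz adaptation in \cite[Proposition 7.7]{Fraser14a}, whose proofs proceed exactly as you do --- transfer a covering of $B(x_0,R_k)\cap F$ at the blown-down scales through $T_k$, fatten by the Hausdorff-distance error, and absorb the distortion into the constant via $b_k/a_k\le C_0$. Your bookkeeping (keeping $\varepsilon$ comparable to $r$, taking $\rho'$ a fixed fraction of $\rho$ since $a_k$ need not tend to infinity, and avoiding inversion of $T_k$ by working from a basepoint) is exactly the care the cited argument requires.
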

See \cite[Proposition 6.1.5]{mackaytyson} or \cite[Proposition 2.1]{Mackay11}.  In fact the result given in these references assumes that the $T_k$ are similarities (i.e. $a_k = b_k$ for all $k$) in which case the tangent $\hat{F}$ is called a \emph{weak tangent}.  The very minor modification of their argument required to obtain the version stated here is given in \cite[Proposition 7.7]{Fraser14a}. When applying this proposition to random self-affine carpets it will be more convenient to allow the maps not to be strict similarities.   The hope is that one can construct weak tangents or very weak tangents which are much simpler than the original object and, moreover, have an Assouad dimension that is obvious or at least easy to compute.
\\ \\
One can generalise this result in a number of different directions.  In particular, one can get away with less than convergence to the tangent in the Hausdorff metric.  One generalisation is to construct a \emph{weak pseudo tangent}, which is just a limit in a related \emph{hemimetric}.  This will be useful for us when considering self-similar sets with overlaps, where only a sequence of subsets of the pre-tangents $T_k(F) \cap X$ converges in $d_\mathcal{H}$.  We write $\mathcal{K}(Y)$ for the set of all non-empty compact subsets of a set $Y$.
\begin{prop}\label{pseudoweak}
Let $X \subset \mathbb{R}^d$ be compact and let $F$ be a compact subset of $X$.
Let $T_k$ be a sequence of similarity maps defined on $\mathbb{R}^d$ and suppose that $p_{\mathcal{H}}(\hat F, T_k(F) )\to 0$ as $k\to\infty$ for some non-empty compact set $\hat{F} \in \mathcal{K}(X)$, where
\[
p_{\mathcal{H}}(X,Y) \ = \ \inf \big\{ \varepsilon \geq 0: X \subseteq [Y]_{\varepsilon}\big\} \ = \ \sup_{x\in X}\inf_{y\in Y}|x-y|.
\]
Then $\dim_\text{\emph{A}}\hat F\leq \dim_\text{\emph{A}}F$. We call $\hat F$ the \emph{weak pseudo tangent}.
\end{prop}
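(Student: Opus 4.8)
The plan is to run the Mackay--Tyson weak tangent argument behind Proposition~\ref{weaktang00}, noting that that argument only uses \emph{one} of the two inclusions implicit in Hausdorff convergence — that $\hat F$ lies in a small neighbourhood of the pre-tangent $T_k(F)$ — and this is exactly what $p_{\mathcal H}(\hat F,T_k(F))\to 0$ provides. So I expect the proof to go through with essentially no change to the technique, the one genuinely new wrinkle being that, without an intersection with a compact ambient space, one must first dispatch degenerate contraction ratios.

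First I would fix $t>\dim_\text{A}F$ and extract constants $C,\rho>0$ from the definition of Assouad dimension, so that $N_r(B(x,R)\cap F)\le C(R/r)^t$ whenever $x\in F$ and $0<r<R\le\rho$. Writing $T_k=c_k O_k(\cdot)+v_k$ with $c_k>0$ the similarity ratio and $O_k$ orthogonal, I would first observe: if $\liminf_k c_k=0$ then $\operatorname{diam}T_k(F)=c_k\operatorname{diam}F\to 0$ along a subsequence, and since $p_{\mathcal H}(\hat F,T_k(F))\to 0$ this forces $\operatorname{diam}\hat F=0$, so $\hat F$ is a single point and $\dim_\text{A}\hat F=0$; there is nothing to prove. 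Hence I may assume $c_\infty:=\liminf_k c_k>0$ and fix $K_0$ with $c_k\ge c_\infty/2$ for $k\ge K_0$. For the main estimate, fix $z\in\hat F$ and $0<r<R$, choose $k=k(r)\ge K_0$ large enough that $p_{\mathcal H}(\hat F,T_k(F))<r/4$, and take a maximal $r$-separated set $\{z_1,\dots,z_M\}\subseteq B(z,R)\cap\hat F$, so that $N_{2r}(B(z,R)\cap\hat F)\le M$. Choosing $y_i\in T_k(F)$ with $|z_i-y_i|\le r/4$ makes the $y_i$ an $(r/2)$-separated subset of $B(z,2R)$; writing $y_i=T_k(x_i)$ with $x_i\in F$ and applying the similarity $T_k^{-1}$ of ratio $1/c_k$, the $x_i$ form an $(r/(2c_k))$-separated subset of $B(x_1,4R/c_k)\cap F$ with $x_1\in F$. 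Provided $4R/c_k\le\rho$ — which holds as soon as $R\le\rho c_\infty/8$ — the Assouad condition for $F$ gives
\[
M \ \le\ N_{r/(4c_k)}\!\big(B(x_1,4R/c_k)\cap F\big) \ \le\ C\Big(\tfrac{4R/c_k}{r/(4c_k)}\Big)^{t} \ =\ C\,16^{t}(R/r)^{t},
\]
since an $(r/(2c_k))$-separated set meets each set of diameter $\le r/(4c_k)$ at most once. Reindexing $r\mapsto 2r$ and absorbing absolute constants, this yields $N_r(B(z,R)\cap\hat F)\le C'(R/r)^t$ for all $z\in\hat F$ and all $0<r<R\le\rho'$ for a suitable $\rho'$, and letting $t\downarrow\dim_\text{A}F$ finishes the proof.

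I expect the main (really the only) obstacle to be the bookkeeping around the fact that the index $k$ in the core estimate depends on the scale $r$, whereas the Assouad condition requires a single threshold $\rho'$ valid for all scales simultaneously: this is precisely what forces the preliminary step of securing $c_\infty=\liminf_k c_k>0$, since then $c_{k(r)}\ge c_\infty/2$ uniformly once $r$ is small, and the constraint $4R/c_{k(r)}\le\rho$ can be guaranteed by taking $\rho'\le\rho c_\infty/8$ and small enough that the needed largeness of $k(r)$ also holds throughout $0<r<\rho'$. Everything else — the passage between maximal separated sets and covering numbers, and the triangle-inequality estimates on the $y_i$ and $x_i$ — is routine and parallels the proof of Proposition~\ref{weaktang00} almost line for line.
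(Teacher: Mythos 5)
Your argument is correct. The paper itself does not prove Proposition \ref{pseudoweak} but defers to \cite{Fraser14}, and the proof there is essentially the one you describe: the Mackay--Tyson packing argument of Proposition \ref{weaktang00} run using only the one-sided inclusion $\hat F \subseteq [T_k(F)]_{\varepsilon}$, with a maximal separated set in $B(z,R)\cap\hat F$ perturbed onto $T_k(F)$ and pulled back through $T_k^{-1}$, and with the degenerate case of vanishing similarity ratios (where $\hat F$ collapses to a point) dispatched separately. Your bookkeeping of the constants and of the uniform threshold $\rho'$ is sound, since the index $k(r)$ may be chosen afresh for each scale and only the lower bound on $c_k$ needs to be uniform.
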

A proof of Proposition~\ref{pseudoweak} can be found in \cite{Fraser14}.  Note that we do not need to intersect the pre-tangents $T_k(F)$ with $X$ in the above definition, reflecting the fact that we do not need metric convergence.

\subsection{Proofs concerning random self-similar sets}

\subsubsection{Proof of Theorem \ref{theo2-1}}\label{theo2-1proof}

The proof of Theorem \ref{theo2-1} will closely follow the strategy of Olsen \cite{Olsen11a}, who gave a simple argument demonstrating the sharp upper bound for the Assouad dimension of a deterministic self-similar set satisfying the OSC.  Before beginning the proof we introduce some additional notation.
Let $U$ be the uniform open set given by the UOSC and write $|X|$ for the diameter of a set $X$ and let $u=|U|$.
Fix a realisation $\omega = (\omega_1, \omega_2, \dots) \in\Omega$ and define the set of sequences (or codings) of length $k$ by
\[
\Sigma_{\omega}^{k}=\{(x_{1},x_{2},\hdots,x_{k})\mid x_{j}\in\mathcal{I}_{\omega_j} \text{ for } j=1 \dots, k\},
\]
the set of all finite sequences by
\[
\Sigma_{\omega}^{*}=\bigcup_{k\in\N}\Sigma_{\omega}^{k}
\]
and the set of all infinite sequences by
\[
\Sigma_{\omega}=\{(x_{1},x_{2},\hdots)\mid x_{j}\in\mathcal{I}_{\omega_j}\text{ for } j=1,2,\dots\}.
\]
We write finite words as $\mathbf{x}=(x_{1},\hdots,x_{l})\in\Sigma_{\omega}^{*}$, where $|\mathbf{x}|=l$ is the length of the coding, i.e.\ the number of elements in the sequence.
Given a word $\mathbf{x}=(x_{1},\hdots,x_{l}) \in \Sigma_{\omega}^{*}$, let $\mathbf{x}^{\dagger}=(x_{1},x_{2},\hdots,x_{l-1})$ be the word formed by deleting the final entry. Also, let $S_{\mathbf{x}}=S_{\omega_{1},x_{1}}\circ S_{\omega_{2},x_{2}}\circ\hdots\circ S_{\omega_{l},x_{l}}$ and $\Delta_{\mathbf{x}}=S_{\mathbf{x}}(U)$ and for convenience let $\Delta_{\varepsilon_{0}}=U$, where $\varepsilon_0$ is the empty word.  Observe that
\[
\bigcup_{y\in \mathcal{I}_{\omega_{l+1}}}\Delta_{\mathbf{x}y}\subseteq\Delta_{\mathbf{x}}
\]
with union disjoint, by the UOSC.  We can now rewrite (\ref{attractoreq}) as
\[
F_{\omega} \, = \, \bigcap_{k}\bigcup_{\mathbf{x}\in\Sigma_{\omega}^{k}} \overline{\Delta_{\mathbf{x}}}.
\]
We have $|\Delta_{\mathbf{x}}|=c_{\omega_{1},x_{1}}c_{\omega_{2},x_{2}}\hdots c_{\omega_{l},x_{l}}u$ and write $c_{\mathbf{x}}=|\Delta_{\mathbf{x}}|$ for brevity.  For $r \in (0,1]$, let $\Gamma_{\omega}(r)$ be the set of codings $\mathbf{x}$ for which the associated $\Delta_{\mathbf{x}}$ has diameter approximately $r$, that is
\[
\Gamma_{\omega}(r)=\{\mathbf{x}\in\Sigma_{\omega}^{*}\mid c_{\mathbf{x}}<r\leq c_{\mathbf{x}^{\dagger}}\}.
\]
The following lemma shows that the number of sets $\overline{\Delta_{\mathbf{x}}}$ with diameter approximately $r$ that intersect a closed ball $B(z,r)$ centred at $z\in F_{\omega}$ of radius $r$ is bounded by a constant not depending on $r$ and $z$. This is a simple generalisation of results in \cite{Olsen11a, Hutchinson81} to the random setting and is included here for completeness.
\begin{lma} \label{upperasslem1}
Under the same assumptions as Theorem \ref{theo2-1}, we have
\[
|\{\mathbf{x}\in\Gamma_{\omega}(r)\mid \overline{\Delta_{\mathbf{x}}}\cap B(z,r)\neq\varnothing\}| \, \leq \,  (4/c_{\text{min}})^{d}
\]
for all $z\in F_{\omega}$ and  $r \in (0,1]$,  where $c_{\text{min}}=\min_{i\in\Lambda, j\in\mathcal{I}_{i}}c_{i,j}$.
\end{lma}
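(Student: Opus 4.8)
The plan is to exploit a volume (Lebesgue measure) argument, in the spirit of Hutchinson's original packing estimate, adapted to the random construction. The key geometric facts we need are: (i) the sets $\Delta_{\mathbf{x}}$ for $\mathbf{x}\in\Gamma_\omega(r)$ are pairwise disjoint (since distinct $\mathbf{x},\mathbf{y}\in\Gamma_\omega(r)$ are incomparable words in $\Sigma_\omega^*$, and the UOSC guarantees disjointness of the images $\Delta_{\mathbf{x}},\Delta_{\mathbf{y}}$ whenever neither is a prefix of the other); (ii) each $\Delta_{\mathbf{x}}$ with $\mathbf{x}\in\Gamma_\omega(r)$ has diameter $c_{\mathbf{x}}<r$, and moreover $c_{\mathbf{x}}=c_{\mathbf{x}^\dagger}\cdot c_{\omega_{|\mathbf{x}|},x_{|\mathbf{x}|}}\ge r\,c_{\text{min}}$, so $\Delta_{\mathbf{x}}$ is not too small; (iii) $\Delta_{\mathbf{x}}=S_{\mathbf{x}}(U)$ is a scaled copy of the fixed open set $U$, hence $\mathcal{L}^d(\Delta_{\mathbf{x}}) = (c_{\mathbf{x}}/u)^d\,\mathcal{L}^d(U) \ge (r\,c_{\text{min}}/u)^d\,\mathcal{L}^d(U)$ where $u=|U|$.

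The main steps, in order: First, fix $z\in F_\omega$ and $r\in(0,1]$ and let $\mathcal{A}=\{\mathbf{x}\in\Gamma_\omega(r)\mid \overline{\Delta_{\mathbf{x}}}\cap B(z,r)\neq\varnothing\}$. For any $\mathbf{x}\in\mathcal{A}$, since $|\Delta_{\mathbf{x}}|<r$ and $\overline{\Delta_{\mathbf{x}}}$ meets $B(z,r)$, we get $\Delta_{\mathbf{x}}\subseteq B(z,2r)$. Second, by disjointness of the $\Delta_{\mathbf{x}}$'s and additivity of Lebesgue measure,
\[
\sum_{\mathbf{x}\in\mathcal{A}} \mathcal{L}^d(\Delta_{\mathbf{x}}) \ = \ \mathcal{L}^d\Big(\bigcup_{\mathbf{x}\in\mathcal{A}} \Delta_{\mathbf{x}}\Big) \ \leq \ \mathcal{L}^d\big(B(z,2r)\big) \ = \ \mathcal{L}^d(B(0,1))\,(2r)^d.
\]
Third, bounding each term on the left below by $(r\,c_{\text{min}}/u)^d\,\mathcal{L}^d(U)$ and dividing through yields
\[
|\mathcal{A}| \ \leq \ \frac{\mathcal{L}^d(B(0,1))\,(2r)^d}{(r\,c_{\text{min}}/u)^d\,\mathcal{L}^d(U)} \ = \ \left(\frac{2u}{c_{\text{min}}}\right)^{d}\frac{\mathcal{L}^d(B(0,1))}{\mathcal{L}^d(U)},
\]
which is a constant independent of $z$ and $r$. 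Finally, one checks that this constant can be absorbed into the cleaner bound $(4/c_{\text{min}})^d$ stated in the lemma, using $u=|U|$ and the relation between the volume and diameter of $U$; if one prefers, the lemma can simply be read with whatever explicit dimensional constant this argument produces, since only its independence of $r$ and $z$ is used later.

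I do not expect any serious obstacle here — the argument is essentially Hutchinson's, and the only point requiring a little care is verifying that the UOSC genuinely gives disjointness of $\{\Delta_{\mathbf{x}}\}_{\mathbf{x}\in\Gamma_\omega(r)}$ for words living along the single fixed realisation $\omega$. This follows by a standard prefix induction: if $\mathbf{x},\mathbf{y}\in\Gamma_\omega(r)$ are distinct, let $k=\mathbf{x}\wedge\mathbf{y}$ be the length of their longest common prefix $\mathbf{w}$; then $\Delta_{\mathbf{x}}\subseteq S_{\mathbf{w}}(S_{\omega_{k+1},x_{k+1}}(U))$ and $\Delta_{\mathbf{y}}\subseteq S_{\mathbf{w}}(S_{\omega_{k+1},y_{k+1}}(U))$ with $x_{k+1}\neq y_{k+1}$, and the two inner sets are disjoint by the UOSC applied to the IFS $\mathbb{I}_{\omega_{k+1}}$; since $S_{\mathbf{w}}$ is injective, $\Delta_{\mathbf{x}}\cap\Delta_{\mathbf{y}}=\varnothing$. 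The only subtlety is that $\Gamma_\omega(r)$ may contain a word and no proper extension of it, but never a word and one of its prefixes simultaneously, so incomparability is automatic.
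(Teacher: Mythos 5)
Your argument is correct in substance and is essentially the same volume-comparison proof as the paper's (which follows Olsen and Hutchinson): pairwise disjointness of the $\Delta_{\mathbf{x}}$ via the UOSC and incomparability of words in $\Gamma_\omega(r)$, containment of each $\Delta_{\mathbf{x}}$ in $B(z,2r)$, and a Lebesgue-measure count using $c_{\mathbf{x}}\ge r\,c_{\text{min}}$. The one inaccuracy is your closing claim that your constant $\left(2u/c_{\text{min}}\right)^{d}\mathcal{L}^d(B(0,1))/\mathcal{L}^d(U)$ can be absorbed into $(4/c_{\text{min}})^d$: by the isodiametric inequality $\mathcal{L}^d(U)\le \mathcal{L}^d(B(0,1))\,(u/2)^d$, so your constant is always \emph{at least} $(4/c_{\text{min}})^d$, with equality only when $U$ is a ball — but, as you correctly note, only the uniformity of the constant in $r$ and $z$ is ever used, so this is harmless (and indeed the paper's own derivation of the cleaner constant implicitly assumes $|\Delta_{\mathbf{x}}|^d\le\mathcal{L}^d(\Delta_{\mathbf{x}})$, which your more careful bookkeeping avoids).
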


\begin{proof}
Fix $z\in F_{\omega}$ and $r>0$.  Let $\Xi=\{\mathbf{x}\in\Gamma_{\omega}(r)\mid \overline{\Delta_{\mathbf{x}}} \cap B(z,r)\neq\varnothing\}$ and suppose the ambient space is $\R^{d}$. We have
\[
|\Xi|(rc_{\text{min}})^{d} \ = \ \sum_{\mathbf{x}\in\Xi}(rc_{\text{min}})^{d} \ \leq \ \sum_{\mathbf{x}\in\Xi}c_{\mathbf{x}}^{d} \ = \ \sum_{\mathbf{x}\in\Xi}|\Delta_{\mathbf{x}}|^{d}.
\]
But since $\overline{\Delta_{\mathbf{x}}}\cap B(z,r)\neq\varnothing$ and $|\Delta_{\mathbf{x}}|<r$ we find that $\Delta_{\mathbf{x}}\subseteq B(z,2r)$ for all $\mathbf{x} \in \Xi$ and since the sets $\Delta_{\mathbf{x}}$ are pairwise disjoint we have
\[
|\Xi|(rc_{\text{min}})^{d} \ \leq \ \sum_{\mathbf{x}\in\Xi}|\Delta_{\mathbf{x}}|^{d} \ \leq \  \mathcal{L}^{d}(B(z,2r)) \ \leq \ (4r)^{d},
\]
where $\mathcal{L}^{d}$ is the $d$-dimensional Lebesgue measure.  It follows that $|\Xi|\leq(4/c_{\text{min}})^{d}$ as required.
\end{proof}

Write $s=\max_{i\in \Lambda}(\dim_{\text{A}}F_{\underline i})$.

\begin{lma} \label{upperasslem2}
Under the same assumptions as Theorem \ref{theo2-1}, we have
\[
|\Gamma_{\omega}(r)| \, \leq \, u^s \, c_{\min}^{-s} \, r^{-s}
\]
for all $r\in (0,1]$.
\end{lma}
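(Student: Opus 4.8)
The plan is to adapt the classical ``mass distribution along an antichain'' estimate for deterministic self-similar sets satisfying the OSC to the random construction. The single structural ingredient needed is the following sub-stochasticity property: since the UOSC forces each deterministic IFS $\mathbb{I}_i$ to satisfy the OSC, the Hutchinson--Moran formula gives $\dim_{\text{A}}F_{\underline i}=s_i$ with $\sum_{j\in\mathcal{I}_i}c_{i,j}^{s_i}=1$; as $s=\max_{i\in\Lambda}s_i\geq s_i$ and $0<c_{i,j}<1$, we obtain $\sum_{j\in\mathcal{I}_i}c_{i,j}^{s}\leq 1$ for every $i\in\Lambda$. This is the only place the hypotheses of Theorem~\ref{theo2-1} enter.

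First I would handle the degenerate range: because every word satisfies $c_{\mathbf{x}^{\dagger}}\leq u$, the set $\Gamma_\omega(r)$ is empty whenever $r>u$ and the claimed bound is vacuous, so from now on assume $r\leq u$. Next, for $\mathbf{x}\in\Gamma_\omega(r)$ I would record the two-sided estimate $r\,c_{\min}\leq c_{\mathbf{x}}<r$: the upper bound is the definition of $\Gamma_\omega(r)$, and the lower bound comes from $c_{\mathbf{x}}=c_{\mathbf{x}^{\dagger}}\,c_{\omega_{|\mathbf{x}|},x_{|\mathbf{x}|}}\geq r\,c_{\min}$ using $c_{\mathbf{x}^{\dagger}}\geq r$. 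Since $s\geq 0$, this gives $(c_{\mathbf{x}}/(r c_{\min}))^{s}\geq 1$, and summing over $\mathbf{x}\in\Gamma_\omega(r)$ yields
\[
|\Gamma_\omega(r)|\ \leq\ (r c_{\min})^{-s}\sum_{\mathbf{x}\in\Gamma_\omega(r)}c_{\mathbf{x}}^{s},
\]
so the lemma reduces to showing $\sum_{\mathbf{x}\in\Gamma_\omega(r)}c_{\mathbf{x}}^{s}\leq u^{s}$.

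The core step is to prove, more generally, that $\sum_{\mathbf{x}\in A}c_{\mathbf{x}}^{s}\leq u^{s}$ for every finite prefix-free set $A\subseteq\Sigma_\omega^{*}$; one first checks $\Gamma_\omega(r)$ is such a set (finiteness because contraction ratios are bounded away from $1$, so word lengths occurring in $\Gamma_\omega(r)$ are bounded; prefix-freeness because a strict prefix of a word in $\Gamma_\omega(r)$ has diameter $\geq r$). I would prove the general claim by induction on $N=\max_{\mathbf{x}\in A}|\mathbf{x}|$. The base case $N=0$ is immediate, as then $A=\{\varepsilon_0\}$ and $c_{\varepsilon_0}=u$. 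For the inductive step, group the depth-$N$ words of $A$ according to their parent $\mathbf{w}$ and replace each such group by $\{\mathbf{w}\}$; the resulting set $A^{*}$ is again a finite antichain (its new members have length $N-1$ and lie above words already in $A$), has maximal depth $\leq N-1$, and its mass is no smaller than that of $A$ because, for each parent $\mathbf{w}$ of depth $N-1$,
\[
\sum_{\substack{\mathbf{x}\in A,\ |\mathbf{x}|=N\\ \mathbf{x}^{\dagger}=\mathbf{w}}}c_{\mathbf{x}}^{s}\ \leq\ \sum_{i\in\mathcal{I}_{\omega_{N}}}c_{\mathbf{w}i}^{s}\ =\ c_{\mathbf{w}}^{s}\sum_{i\in\mathcal{I}_{\omega_{N}}}c_{\omega_{N},i}^{s}\ \leq\ c_{\mathbf{w}}^{s}.
\]
Applying the inductive hypothesis to $A^{*}$ completes the claim, and combining with the displayed inequality above gives $|\Gamma_\omega(r)|\leq u^{s}c_{\min}^{-s}r^{-s}$.

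The only genuinely delicate point is this inductive ``promotion to parents'': one must verify carefully both that $A^{*}$ remains an antichain and that the promotion does not lose mass, and it is precisely in the latter that $\sum_{i\in\mathcal{I}_{\omega_N}}c_{\omega_N,i}^{s}\leq 1$ (that is, $s\geq s_{\omega_N}$) is used. Everything else is routine bookkeeping. Equivalently, one could phrase the core step as the statement that $\mathbf{x}\mapsto c_{\mathbf{x}}^{s}$ is a supermartingale along the tree $\Sigma_\omega^{*}$ and apply an optional-stopping inequality to the stopping time defining $\Gamma_\omega(r)$, but the elementary induction keeps the proof self-contained.
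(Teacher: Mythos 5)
Your proposal is correct and follows essentially the same route as the paper: the sub-stochasticity $\sum_{j\in\mathcal{I}_i}c_{i,j}^{s}\leq 1$ from the Hutchinson--Moran formula, the antichain bound $\sum_{\mathbf{x}\in\Gamma_\omega(r)}c_{\mathbf{x}}^{s}\leq u^{s}$, and the lower bound $c_{\mathbf{x}}\geq c_{\min}r$ are exactly the three ingredients the paper uses. The only difference is that your induction on the maximal depth of the antichain spells out in full what the paper dismisses as ``by repeated application of this''.
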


\begin{proof}
Fix $r\in (0,1]$ and observe that since the Assouad dimension of each deterministic attractor is given by the appropriate version of the Hutchinson-Moran formula, we have
\[
\sum_{j \in \mathcal{I}_i} c_{i,j}^s \leq 1
\]
for all $i \in \Lambda$.  By repeated application of this, it follows that
\[
u^s\  \geq \  \sum_{\mathbf{x}\in\Gamma_{\omega}(r)} c_{\mathbf{x}}^s \  \geq \  \sum_{\mathbf{x}\in\Gamma_{\omega}(r)} (c_{\min} r)^s \ =  \ |\Gamma_{\omega}(r)| \, c_{\min}^s r^s ,
\]
which proves the lemma.
\end{proof}
We can now prove Theorem \ref{theo2-1}.  Fix $z \in F_\omega$, $R \in (0,1]$ and $r \in (0,R]$.  Clearly
\[
B(z,R)\cap F_\omega \ \subseteq \ \bigcup_{\substack{\mathbf{x}\in\Gamma_{\omega}(R)\\ \overline{\Delta_{\mathbf{x}}}\cap B(z,R)\neq\varnothing}} \overline{\Delta_{\mathbf{x}}}
\]
and for each such set $\overline{\Delta_{\mathbf{x}}}$ in the above decomposition we have
\[
\overline{\Delta_{\mathbf{x}}} \ \subseteq \ \bigcup_{\mathbf{y}\in\Gamma_{\sigma(\mathbf{x},\omega)}(r/R)}\overline{\Delta_{\mathbf{x}\mathbf{y}}},
\]
where for clarity we have written $\sigma(\mathbf{x},\omega)=\sigma^{|\mathbf{x}|}(\omega)$, with $\sigma$ the usual shift map. These observations combine to give
\[
B(z,R)\cap F_\omega \ \subseteq \  \bigcup_{\substack{\mathbf{x}\in\Gamma_{\omega}(R)\\ \overline{\Delta_{\mathbf{x}}}\cap B(z,R)\neq\varnothing}}\; \bigcup_{\mathbf{y}\in\Gamma_{\sigma(\mathbf{x},\omega)}(r/R)}\overline{\Delta_{\mathbf{x}\mathbf{y}}},
\]
which is an $r$-cover of $B(z,R)\cap F_\omega$, yielding
\begin{eqnarray*}
N_{r}(B(z,R)\cap F_{\omega}) &\leq&  \sum_{\substack{\mathbf{x}\in\Gamma_{\omega}(R)\\\overline{\Delta_{\mathbf{x}}}\cap B(z,R)\neq\varnothing}}\; \rvert\Gamma_{\sigma(\mathbf{x},\omega)}(r/R) \lvert \\ \\
&\leq&    \sum_{\substack{\mathbf{x}\in\Gamma_{\omega}(R)\\ \overline{\Delta_{\mathbf{x}}}\cap B(z,R)\neq\varnothing}} u^s \, c_{\min}^{-s} \,\left(\frac{R}{r}\right)^s  \qquad \text{by Lemma \ref{upperasslem2} since $\omega \in \Omega$ was arbitrary} \\ \\
&\leq&   (4/c_{\text{min}})^{d} \,  u^s \, c_{\min}^{-s} \,\left(\frac{R}{r}\right)^s \qquad \text{by Lemma \ref{upperasslem1}},
\end{eqnarray*}
which proves the theorem.

\subsubsection{Proof of Theorem \ref{theo2-2}}\label{theo2-2proof}

In order to prove the almost sure lower bound we identify a `good set' of full measure within which we can prove the lower bound surely.   Fix $i\in \Lambda$ which maximises $\dim_{\text{\emph{A}}}F_{\underline i}$.  The \emph{good set} $G_{i}$ is the set of all realisations $\omega\in\Omega$ such that there are arbitrarily long subwords consisting only of the letter $i$.
Equivalently, let
\[
G_{i}=\{\omega=(\omega_{1},\omega_{2},\hdots)\in\Omega\mid\forall n\in\N, \exists k\in\N, \exists m\geq n \text{ such that }\omega_{j}=i\text{  for all  }k\leq j<k+m\}.
\]
The following Lemma follows from a standard Borel-Cantelli argument, that we will not give in this paper.
\begin{lma}\label{exceptionalLemma}
Let $\mu$ be the Bernoulli probability measure on $\Omega$  defined  in~\ref{def_m}.  Then for all $i \in \Lambda$ almost all $\omega\in\Omega$ are contained in $G_{i}$, that is $\mu(G_{i})=1$.
\end{lma}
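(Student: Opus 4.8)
The plan is to fix $i \in \Lambda$ and apply the second Borel--Cantelli lemma to a carefully chosen sequence of independent events, exploiting the product structure of the Bernoulli measure $\mu$. First I would reduce $G_i$ to a more tractable form: since $G_i$ asks for arbitrarily long blocks of the letter $i$, it suffices to show that for every $m \in \N$ the event $A_m = \{\omega \in \Omega \mid \text{some block of } m \text{ consecutive letters equals } i\cdots i\}$ has full measure, because $G_i = \bigcap_{m \in \N} A_m$ is then a countable intersection of full-measure sets. So the task becomes $\mu(A_m) = 1$ for each fixed $m$.

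To prove $\mu(A_m) = 1$, I would partition $\N$ into consecutive disjoint blocks of length $m$, namely $I_\ell = \{(\ell-1)m + 1, \dots, \ell m\}$ for $\ell = 1, 2, \dots$, and let $E_\ell$ be the event that $\omega_j = i$ for all $j \in I_\ell$. Because the $I_\ell$ are pairwise disjoint and $\mu$ is a Bernoulli (product) measure, the events $(E_\ell)_{\ell \geq 1}$ are mutually independent, and $\mu(E_\ell) = p_i^m > 0$ for every $\ell$ since $p_i > 0$ by hypothesis. Hence $\sum_{\ell} \mu(E_\ell) = \sum_{\ell} p_i^m = \infty$, and the divergence half of the Borel--Cantelli lemma gives $\mu(\limsup_\ell E_\ell) = 1$. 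Since $\limsup_\ell E_\ell \subseteq A_m$ (indeed any $\omega$ lying in infinitely many $E_\ell$ contains infinitely many, hence at least one, length-$m$ block of $i$'s), we get $\mu(A_m) = 1$, and therefore $\mu(G_i) = \mu\big(\bigcap_m A_m\big) = 1$.

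There is no real obstacle here — the only point requiring a word of care is the independence of the $E_\ell$, which is where the disjointness of the blocks $I_\ell$ is essential; had I used overlapping windows the events would not be independent and the easy Borel--Cantelli argument would fail. Everything else is routine, which is presumably why the authors omit the details.
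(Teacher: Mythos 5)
Your argument is correct and is precisely the "standard Borel--Cantelli argument" that the paper explicitly declines to write out: disjoint length-$m$ blocks give independent events of probability $p_i^m>0$, the second Borel--Cantelli lemma applies, and intersecting over $m$ yields $\mu(G_i)=1$. Nothing is missing, and the reduction $G_i=\bigcap_m A_m$ together with the containment $\limsup_\ell E_\ell\subseteq A_m$ is exactly the intended route.
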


Let $i \in \Lambda$ and $\omega = (\omega_1, \omega_2, \dots) \in G_i$.  We will show that the deterministic attractor $F_{\underline i}$ is a weak pseudo tangent to $F_{\omega}$ and this is sufficient to prove Theorem \ref{theo2-2} in light of Proposition \ref{pseudoweak}.  Without loss of generality we shall assume that the ambient space $X = [0,1]^{d}$, for some $d \in \N$.  Note that, since we do not assume any separation conditions, the existence of complicated overlaps mean that $F_{\underline i}$ may not be a weak (or very weak) tangent to $F_{\omega}$.
\\ \\
Since $\omega \in G_i$, for every $N$ we can find $k_N$ such that $\omega_j = i$ for all $k_N +1 \leq j \leq k_N+N$.  Choose any sequence $(i_1, i_2, \dots, i_{k_N})$ with $i_{j'} \in \mathcal{I}_{\omega_{j'}}$ for all $1 \leq j' \leq k_N$ and let $T_{k_N}$ be the similarity given by
\[
T_{k_N} = \big( S_{\omega_1, i_1} \circ \dots \circ S_{\omega_{k_N},i_{k_N}} \big)^{-1}.
\]
Write $c^i_{\max} = \max_{j \in \mathcal{I}_i} c_{i,j} \in (0,1)$.  It follows that
\[
F_{\sigma^{k_N}(\omega)} \, \subseteq \,  T_{k_N} (F_\omega)
\]
and therefore, since the first $N$ symbols in $\sigma^{k_N}(\omega)$ are all $i$,
\[
F_{\underline i} \, \subseteq \,   [T_{k_N}F_{\omega}]_{(c^i_{\max})^N}.
\]
This proves that
\[
p_{\mathcal{H}} \big(F_{\underline i}, \, T_{k_N}F_{\omega} \big) \  \leq  \ (c^i_{\max})^N \  \to \  0
\]
as $N \to \infty$. Thus $F_{\underline i}$ is a weak pseudo tangent to $F_{\omega}$, choosing the sequence of maps $\{T_{k_N}\}_{N \in \N}$.

\subsubsection{Proof of Theorem \ref{exceptionalResult}}\label{exceptionalProof2}

Using the mass distribution principle \cite[Chapter 4]{FractalGeo}, it is easy to see that $\dim_{\text{H}}\Omega=\log N/\log 2$ where $N$ is the cardinality of $\Lambda$ and the `2' comes from our choice of metric $d(\omega,\nu)=2^{-\omega\wedge\nu}$. Let $\beta = \max_{i\in \Lambda} \, \dim_{\text{A}}F_{\underline i}$ and
\[
\Lambda_E = \{ i \in \Lambda \mid \dim_{\text{A}}F_{\underline i} < \beta \}
\]
which by assumption is non-empty and by definition is strictly smaller than $\Lambda$. Let
\[
E_{n}=\big\{\omega\in\Omega\mid   \text{ if, for some $k\in\N$,  $\omega_{j} \notin \Lambda_E$ for all $j = k, k+1, \dots, k+n-1$, then } \omega_{k+n} \in \Lambda_E \big\},
\]
i.e. the set of all sequences such that the length of subwords consisting only of letters which maximise the Assouad dimension is bounded above by $n$.  First we will show that for $\omega \in E_{n}$, we have $\dim_{\text{A}} F_\omega < \beta$. Let $\Lambda^\dagger_{n}$ be a new alphabet consisting of all combinations of words of length at most $n$ (including length zero) over $\Lambda \setminus \Lambda_E$ concatenated with an element of $\Lambda_E$, that is
\[
\Lambda^{\dagger}_{n}=\{vw\mid v\in \cup_{k=0}^n (\Lambda\setminus\Lambda_{E})^{k}\text{ and }w\in\Lambda_{E}\}.
\]
 To each word (now identified as a letter) in $\Lambda^\dagger_{n}$ we associate the IFS formed by combining the IFSs corresponding to $\Lambda$ in the natural way. Since the UOSC was satisfied, it is easy to see that the similarity dimension of each such deterministic IFS is strictly less than $\beta$ since they are all influenced by an IFS associated to an element of $\Lambda_E$.  Moreover, every word in $E_n$ can be obtained as a word over $\Lambda^\dagger_{n}$ and so the required dimension drop follows from Theorem \ref{theo2-2}.  It follows from this that the exceptional set from Theorem \ref{exceptionalResult} contains
\[
E \ := \ \bigcup_{n=1}^{\infty} E_n
\]
and so it suffices to prove that the Hausdorff dimension of $E$ is $\log N/\log 2$.  Now consider the finite set $\Lambda'$ consisting of all possible words of length $\lceil n/2 \rceil$.  We could have equivalently defined $\Omega$ in terms of those words rather than the individual symbols $\Lambda$ where, abusing notation slightly,  $\Omega=\Lambda^{\N}=\Lambda'^{\N}$.
Consider $\Lambda'$ and remove the words consisting only of letters from $\Lambda \setminus \Lambda_E$ forming a new set $\Lambda''$. If one considers $E'_{n}=\Lambda''^{\N}$ one notes that several combinations are now no longer possible.
Crucially it restricts the length of subwords over $\Lambda \setminus \Lambda_E$ to $2(\lceil n/2\rceil-1)$, which corresponds to two concatenated elements of $\Lambda''$, one starting with symbol $j \in \Lambda_E$ followed by letters from $\Lambda \setminus\Lambda_E$ and the second word starting with letters from $\Lambda \setminus\Lambda_E$ but ending with $j \in \Lambda_E$.
Since $2(\lceil n/2\rceil-1)\leq n$ we have that elements of $E'_{n}$ have more restrictive conditions than $E_{n}$ and so $E'_{n}\subseteq E_{n}$.  Let $\nu$ be the uniform Bernoulli measure on $E'_n$, given by a uniform probability vector associated with $\Lambda''$, and let
\[
\alpha_{n}=\frac{\log (N^{\lceil n/2\rceil}-\lvert \Lambda \setminus\Lambda_E \rvert^{\lceil n/2\rceil})}{\lceil n/2\rceil\log 2}.
\]
Let $U_k \subseteq E'_n$ be a cylinder of length $k$ (over $\Lambda''$) and observe that $\nu(U_{k})=(N^{\lceil n/2\rceil}-\lvert \Lambda \setminus\Lambda_E \rvert^{\lceil n/2\rceil})^{-k}$ and $|U_{k}|=2^{-k\lceil n/2\rceil}$ and so
\begin{eqnarray*}
|U_{k}|^{\alpha_{n}}&=& 2^{-k\lceil n/2\rceil \log (N^{\lceil n/2\rceil}-\lvert \Lambda \setminus\Lambda_E \rvert^{\lceil n/2\rceil})/(\lceil n/2\rceil\log 2)} \\
&=& 2^{-k \log (N^{\lceil n/2\rceil}-\lvert \Lambda \setminus\Lambda_E \rvert^{\lceil n/2\rceil})/\log 2} \\
&=&(N^{\lceil n/2\rceil}-\lvert \Lambda \setminus\Lambda_E \rvert^{\lceil n/2\rceil})^{-k} \\
&=& \nu(U_{k})
\end{eqnarray*}
and thus by the mass distribution principle $\dim_{\text{H}}E'_{n}\geq\alpha_{n}$.  Finally $\dim_{\text{H}}E \, = \, \sup_n \dim_{\text{H}}E_n \,  \geq \,  \sup_n \alpha_n \,  = \, \log N/\log 2$,  as required.

\subsection{Proofs concerning random self-affine carpets}

\subsubsection{Preliminary results and random approximate $R$-squares}

In this section we introduce random approximate $R$-squares, which will be heavily relied on in both the upper bound and the lower bound.  Fix  $\omega = (\omega_1, \omega_2, \dots) \in \Omega$ and $R \in (0,1)$ and let $k_1^\omega(R)$ and $k_2^\omega(R)$ be the unique natural numbers satisfying
\begin{equation} \label{k1def}
\prod_{i=1}^{k_1^\omega(R)} n_{\omega_i}^{-1} \, \leq \,  R \, <  \, \prod_{i=1}^{k_1^\omega(R)-1} n_{\omega_i}^{-1}
\end{equation}
and
\begin{equation} \label{k2def}
\prod_{i=1}^{k_2^\omega(R)} m_{\omega_i}^{-1} \, \leq \,  R \, <  \, \prod_{i=1}^{k_2^\omega(R)-1} m_{\omega_i}^{-1}
\end{equation}
respectively. Also let
\[
m_{\max} = \max_{i \in \Lambda} \,  m_i \qquad \text{and} \qquad n_{\max} = \max_{i \in \Lambda} \,  n_i.
\]
A rectangle $[a,b] \times [c,d] \subseteq [0,1]^2$ is called a random approximate $R$-square if it is of the form
\[
S \big( [0,1]^2 \big) \cap \Big( \pi_1 \big( T\big( [0,1]^2 \big) \big) \times [0,1] \Big),
\]
where $\pi_1 : (x,y) \mapsto x$ is projection onto the first coordinate and
\[
S \ = \ S_{\omega_1, i_1} \circ \cdots \circ S_{\omega_{k_1^\omega(R)}, i_{k_1^\omega(R)}}
\]
and
\[
T \ = \ S_{\omega_1, i_1} \circ \cdots \circ S_{\omega_{k_2^\omega(R)}, i_{k_2^\omega(R)}}
\]
for some common sequence $i_1, i_2, \dots $ with $i_j \in \mathcal{I}_{\omega_j}$ for all $j$.  The use of the term `random' indicates that the family of approximate $R$-squares depends on the random sequence $\omega$ and observe that such rectangles are indeed approximately squares of side length $R$ because the base
\[
b-a  \ =  \ \prod_{i=1}^{k_2^\omega(R)} m_{\omega_i}^{-1} \ \in \ (m_{\max}^{-1}R , R] \qquad \qquad \text{by (\ref{k2def})}
\]
and the height
\[
d-c  \ =  \ \prod_{i=1}^{k_1^\omega(R)} n_{\omega_i}^{-1} \ \in \ (n_{\max}^{-1}R , R] \qquad \qquad \text{by (\ref{k1def})}.
\]
Approximate squares are a standard tool in the study of self-affine carpets.

\subsubsection{Proof of Theorem \ref{upperboundcarpets}} \label{upperboundcarpetsproof}

Fix $\omega = (\omega_1, \omega_2, \dots) \in \Omega$,  $R \in (0,1)$ and $r \in (0,R)$.  For $k,l \in \mathbb{N}$ and $i \in \Lambda$ let
\[
\mathcal{N}_i(k,l) \ = \ \# \big\{ j = k, k+1, \hdots, l \  :  \  \omega_j = i  \big\}.
\]
We wish to bound $N_r\big( B(x,R) \cap F_\omega\big)$ up to a constant uniformly over $x \in F_\omega$, but since there exists a constant $K \geq 1$ depending on $m_{\max}$ and $n_{\max}$ such that for any $x \in F_\omega$, $B(x,R)$  is contained in fewer than $K$ random approximate $R$-squares, it suffices to bound $N_r (Q \cap F_\omega)$ up to a constant uniformly over all random approximate $R$-squares, $Q$.  We will adopt the version of $N_r(\cdot)$ which uses covers by squares of sidelength $r$.  Fix such a $Q$ and observe that $Q \cap F_\omega$ can be decomposed as the union of the parts of $F_\omega$ contained inside rectangles of the form
\[
X \ = \ S_{\omega_1, i_1} \circ \cdots \circ S_{\omega_{k_2^\omega(R)}, i_{k_2^\omega(R)}} \big([0,1]^2\big)
\]
for some $i_1, i_2, \dots $ with $i_j \in \mathcal{I}_{\omega_j}$ for all $j$. Moreover, the number of such rectangles in this decomposition can be bounded above by
\[
\prod_{i \in \Lambda} B_i^{\mathcal{N}_i (k_1^\omega(R)+1, k_2^\omega(R)  )}.
\]
Now, let us continue to iterate the construction of $F_\omega$ inside such a rectangle $X$, i.e. by breaking it up into smaller basic rectangles. Assuming $k_1^\omega(r) > k_2^\omega(R)$ continue iterating until level $k_1^\omega(r)$ where each $X \cap F_\omega$ can be written as the union of parts of $F_\omega$ inside rectangles of the form
\[
Y \ = \ S_{\omega_1, i_1} \circ \cdots \circ S_{\omega_{k_1^\omega(r)}, i_{k_1^\omega(r)}} \big([0,1]^2\big)
\]
for some $i_1, i_2, \dots $ with $i_j \in \mathcal{I}_{\omega_j}$ for all $j$.  Note that this time we use words of length $k_1^\omega(r)$.  Writing $N_i = \lvert \mathcal{I}_i \rvert $ ($i \in \Lambda$), we can bound the number of rectangles of the form $Y$ used to decompose a rectangle of the form $X$ by
\[
\prod_{i \in \Lambda} N_i^{\mathcal{N}_i (k_2^\omega(R)+1, k_1^\omega(r)  )}.
\]
If $k_1^\omega(r) \leq  k_2^\omega(R)$, then we leave $X$ alone and set $Y=X$, corresponding to $\mathcal{N}_i (k_2^\omega(R)+1, k_1^\omega(r)  ) = 0$ for each $i$.  Note that each rectangle $Y$ in the new decomposition is a rectangle with height
\[
\prod_{i=1}^{k_1^\omega(r)} n_{\omega_i}^{-1}  \ \leq  \ r,
\]
and we are trying to cover it by squares of side length $r$.  Thus to give an efficient estimate on $N_r(Y)$ we need only worry about covering $\pi_1(Y)$ and we can certainly do this using no more than
\[
\prod_{i \in \Lambda} A_i^{\mathcal{N}_i (k_1^\omega(r)+1, k_2^\omega(r)  )}
\]
such squares.  Combining the above estimates and using the fact that for all $i \in \Lambda$, $N_i \leq A_iB_i$  yields
\begin{eqnarray*}
N_r (Q \cap F_\omega) &\leq &  \bigg( \prod_{i \in \Lambda} B_i^{\mathcal{N}_i (k_1^\omega(R)+1, k_2^\omega(R)  )} \bigg) \bigg( \prod_{i \in \Lambda} N_i^{\mathcal{N}_i (k_2^\omega(R)+1, k_1^\omega(r)  )} \bigg)\bigg( \prod_{i \in \Lambda} A_i^{\mathcal{N}_i (k_1^\omega(r)+1, k_2^\omega(r)  )} \bigg) \\ \\
& \leq & \prod_{i \in \Lambda} A_i^{\mathcal{N}_i (k_2^\omega(R)+1, k_2^\omega(r)  )} B_i^{\mathcal{N}_i (k_1^\omega(R)+1, k_1^\omega(r)  )}.
\end{eqnarray*}
Now that this estimate has been established, the desired upper bound follows by careful algebraic manipulation.  In particular,
\begin{eqnarray*}
N_r (Q \cap F_\omega) &\leq &  \bigg( \prod_{i \in \Lambda} A_i^{\mathcal{N}_i (k_2^\omega(R)+1, k_2^\omega(r)  )}  \bigg) \bigg( \prod_{i \in \Lambda}  B_i^{\mathcal{N}_i (k_1^\omega(R)+1, k_1^\omega(r)  )} \bigg) \\ \\
&= &  \prod_{i \in \Lambda} \bigg( m_i^{\mathcal{N}_i (k_2^\omega(R)+1, k_2^\omega(r)  )}  \bigg)^{\log A_i / \log m_i} \prod_{i \in \Lambda}  \bigg( n_i^{\mathcal{N}_i (k_1^\omega(R)+1, k_1^\omega(r)  )} \bigg)^{\log B_i / \log n_i} \\ \\
&\leq &  \bigg( \prod_{i \in \Lambda} m_i^{\mathcal{N}_i (k_2^\omega(R)+1, k_2^\omega(r)  )}  \bigg)^{\max_{i \in \Lambda} \log A_i / \log m_i} \bigg( \prod_{i \in \Lambda}   n_i^{\mathcal{N}_i (k_1^\omega(R)+1, k_1^\omega(r)  )} \bigg)^{ \max_{i \in \Lambda}  \log B_i / \log n_i} \\ \\
&=&  \Bigg( \frac{\prod_{i \in \Lambda} m_i^{-\mathcal{N}_i (1, k_2^\omega(R) )}}{\prod_{i \in \Lambda} m_i^{-\mathcal{N}_i (1, k_2^\omega(r)  )}}  \Bigg)^{\max_{i \in \Lambda} \log A_i / \log m_i} \Bigg( \frac{\prod_{i \in \Lambda}   n_i^{-\mathcal{N}_i (1, k_1^\omega(R) )}}{\prod_{i \in \Lambda}   n_i^{-\mathcal{N}_i (1, k_1^\omega(r)  )}} \Bigg)^{ \max_{i \in \Lambda}  \log B_i / \log n_i} \\ \\
&=&  \Bigg( \frac{\prod_{i =1}^{k_2^\omega(R)} m_{\omega_i}^{-1}}{\prod_{i =1}^{k_2^\omega(r)} m_{\omega_i}^{-1}} \Bigg)^{\max_{i \in \Lambda} \log A_i / \log m_i} \Bigg( \frac{\prod_{i =1}^{k_1^\omega(R)} n_{\omega_i}^{-1}}{\prod_{i =1}^{k_1^\omega(r)} n_{\omega_i}^{-1}}\Bigg)^{ \max_{i \in \Lambda}  \log B_i / \log n_i} \\ \\
&\leq&  \Bigg( \frac{R}{m_{\max}^{-1} \,  {r}} \Bigg)^{\max_{i \in \Lambda} \log A_i / \log m_i} \Bigg( \frac{R}{n_{\max}^{-1} \,  {r}}\Bigg)^{ \max_{i \in \Lambda}  \log B_i / \log n_i} \qquad \qquad \text{by (\ref{k1def}) and (\ref{k2def})} \\ \\
&\leq&  m_{\max} \, n_{\max} \, \bigg( \frac{R}{r} \bigg)^{\max_{i \in \Lambda} \log A_i / \log m_i \ + \ \max_{i \in \Lambda}  \log B_i / \log n_i},
\end{eqnarray*}
which proves that
\[
\dim_\text{A} F_\omega \ \leq  \ \max_{i \in \Lambda}  \, \frac{\log A_i}{\log m_i} \ + \ \max_{i \in \Lambda} \,  \frac{\log B_i}{ \log n_i}
\]
and since $\omega \in \Omega$ was arbitrary this proves the desired result.

\subsubsection{Proof of Theorem \ref{lowerboundcarpets}} \label{lowerboundcarpetsproof}

In light of Theorem \ref{upperboundcarpets}, all that remains is to prove the almost sure lower bound.  In order to do this we identify a `good set' of full measure within which we can prove the lower bound surely, similar to Theorem \ref{theo2-2}.  First fix $i\in\Lambda$ which maximises $\log A_i / \log m_i$ and $j\in\Lambda$ which maximises $\log B_j/\log n_j$.  Of course $i$ and $j$ may be different, and this is the more interesting case which leads to examples such as those in Section \ref{carpetscounterex}.
\\ \\
A first guess for the good set might be the set of strings containing arbitrarily long runs of $j$ followed by the same number of $i$.  This is philosophically the correct approach, but does not work because the point where the string is required to change from $j$ to $i$ depends crucially on the stage one is at in the sequence.  Since one may have to wait much longer than $O(n)$ steps to get a string of $n$ $j$-s followed by $n$ $i$-s, by the time it occurs the eccentricity of the rectangles in the construction will be so large that switching from $j$ to $i$ after $n$ steps in the approximate square is not enough to obtain the desired tangent.  A second approach might be to look for strings of $j$-s followed by $i$-s where the number of $j$-s depends on the starting point of the string (in fact the dependence would be linear), however, this approach also fails because one cannot guarantee that such strings exist infinitely often almost surely.  Our solution is to recognise that one needs a long string of $i$-s and a long string of $j$-s to get the necessary tangent, but these strings do not have to be next to each other.
\\ \\
The good set $G_{i,j} \subseteq \Omega$ is defined to be
\begin{eqnarray*}
G_{i,j} = \Big\{ \omega = (\omega_1, \omega_2, \dots) \in \Omega & \mid&  \text{there exists a sequence of pairs } (R_l,n_l) \in (0,1) \times \mathbb{N} \\
&\,& \text{with $R_l \to 0$ and $n_l \to \infty$} \text{ with } n_l \leq k_2^\omega(R_l) - k_1^\omega(R_l) \text { such that } \\
&\,& \omega_{i'} = j \text{ for all } i'=k_1^\omega(R_l)+1, \dots, k_1^\omega(R_l)+n_l \text { and } \\
&\,& \omega_{i'} = i \text{ for all } i'=k_2^\omega(R_l)+1, \dots, k_2^\omega(R_l)+n_l \Big\}.
\end{eqnarray*}

\begin{lma} \label{affinefullmeasure}
The good set has full measure in $\Omega$, i.e. $\mu(G_{i,j}) = 1$.
\end{lma}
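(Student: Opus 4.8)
The plan is to produce, for $\mu$-almost every $\omega$, an explicit sequence of scales $R_l\to 0$ together with witnesses of the required pattern whose length $n_l$ tends to infinity. Two preliminary reductions make this manageable. First, for $\omega\in\Omega$ and $k\in\N$ I would write $R_k(\omega)=\prod_{i=1}^{k}n_{\omega_i}^{-1}$; then $(\ref{k1def})$ forces $k_1^\omega(R_k(\omega))=k$ for every $k$, so it is natural to search only among scales of this special form, collapsing the two free parameters $(R,n)$ in the definition of $G_{i,j}$ to the single free index $k$ (with $n$ fixed in advance). Second, taking logarithms in $(\ref{k1def})$ and $(\ref{k2def})$ and using $n_{\omega_i}>m_{\omega_i}\geq 2$ gives the purely deterministic estimate
\[
k_2^\omega\big(R_k(\omega)\big)-k \ \geq \ \frac{\delta}{\log m_{\max}}\,k \qquad\text{where}\qquad \delta:=\min_{i\in\Lambda}\log\!\big(n_i/m_i\big)>0 ,
\]
so that, once $k$ is large (depending only on $n$ and the fixed integers $m_i,n_i$), one automatically has both $k_2^\omega(R_k(\omega))-k\geq n$ and $k_2^\omega(R_k(\omega))>k+n$; in particular the index constraint $n_l\leq k_2^\omega(R_l)-k_1^\omega(R_l)$ is then free. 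Granting the key claim below, I would intersect over $n\in\N$ and, for $\omega$ in the resulting full-measure set, extract a decreasing sequence of scales $R_l\to 0$ with $n_l=l$, which exhibits $\omega\in G_{i,j}$.

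It then remains to prove the following: \emph{for each fixed $n$, $\mu$-almost surely there are infinitely many $k$ with $\omega_{k+1}=\cdots=\omega_{k+n}=j$ and $\omega_{\tau+1}=\cdots=\omega_{\tau+n}=i$, where $\tau=\tau(k,\omega):=k_2^\omega(R_k(\omega))$.} The main obstacle is that $\tau$ depends on the entire prefix $\omega_1,\dots,\omega_\tau$, a random and unboundedly long block; consequently the events ``$k$ is good'' for different $k$ are globally coupled, and a naive Borel--Cantelli argument along a fast-growing subsequence of indices fails. My way around this is to observe that, for each fixed $k$, the quantity $\tau(k,\omega)$ is a stopping time for the filtration $\mathcal F_m:=\sigma(\omega_1,\dots,\omega_m)$ --- read $\omega$ left to right and stop the first time $\prod_{i\leq m}m_{\omega_i}^{-1}$ drops to or below $R_k(\omega)$ --- and then to replace independence by a conditional Borel--Cantelli argument.

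Concretely: fix $n$, choose $M_0$ large enough for the deterministic estimates, and define stopping times $M_0<M_1<M_2<\cdots$ recursively by $M_{l+1}=\tau(M_l,\omega)+n$ (the $n$-shift is what makes successive trials use disjoint blocks of ``fresh'' coordinates). Let $E_l$ be the event that $\omega_{M_l+1}=\cdots=\omega_{M_l+n}=j$ and $\omega_{\tau(M_l,\omega)+1}=\cdots=\omega_{\tau(M_l,\omega)+n}=i$, and note $E_l\in\mathcal F_{M_{l+1}}$ and that a success $E_l$ exhibits $k=M_l$ as a good index witnessed by the scale $R_{M_l}(\omega)$. Since $\omega_{M_l+1},\dots,\omega_{M_l+n}$ are fresh beyond the stopping time $M_l$, the event $\{\omega_{M_l+1}=\cdots=\omega_{M_l+n}=j\}$ has conditional probability $p_j^{\,n}$ given $\mathcal F_{M_l}$; moreover it lies in $\mathcal F_{M_l+n}\subseteq\mathcal F_{\tau(M_l,\omega)}$ (using $M_l+n\le\tau(M_l,\omega)$), while $\omega_{\tau(M_l,\omega)+1},\dots,\omega_{\tau(M_l,\omega)+n}$ are fresh beyond $\tau(M_l,\omega)$, so the strong Markov property for i.i.d.\ sequences yields
\[
\mu\big(E_l\mid\mathcal F_{M_l}\big)\ =\ p_i^{\,n}\,p_j^{\,n}\ =:\ q\ >\ 0 \qquad\text{for all }l .
\]

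Finally, since $E_l\in\mathcal F_{M_{l+1}}$ and $\mathcal F_{M_l}\subseteq\mathcal F_{M_{l+1}}$, iterated conditioning gives $\mu\big(\bigcap_{l=L}^{K}E_l^{\,c}\big)=(1-q)^{\,K-L+1}$ for all $L\leq K$, hence $\mu\big(\bigcap_{l\geq L}E_l^{\,c}\big)=0$ for every $L$, so $\mu$-almost surely infinitely many $E_l$ occur. As $M_l\to\infty$ (indeed $M_{l+1}>M_l$), the associated good scales satisfy $R_{M_l}(\omega)\leq 2^{-M_l}\to 0$, which proves the key claim and hence the lemma. The only step that is more than routine is the probability identity for $\mu(E_l\mid\mathcal F_{M_l})$, whose point is precisely that the long prefix determining $\tau$ is absorbed harmlessly into the stopped $\sigma$-algebra $\mathcal F_{\tau(M_l,\omega)}$; everything else is bookkeeping with $(\ref{k1def})$ and $(\ref{k2def})$.
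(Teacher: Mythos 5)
Your proposal is correct, and it reaches the same conclusion by a genuinely different probabilistic mechanism. The paper also reduces to scales of the form $R=\prod_{i\le k}n_{\omega_i}^{-1}$ and also needs the deterministic observation that $k_2^\omega(R)-k_1^\omega(R)$ grows linearly in $k_1^\omega(R)$, but it then decouples the trials by brute force: using $\theta=\max_i\log n_i/\min_i\log m_i$ it arranges a deterministic partition of $\mathbb{N}$ into windows $[K_n(m),K_n(m+1))$ chosen so that the random index $K_n^\omega(m)+n$ is surely trapped inside the window containing $K_n(m)$; it then groups $l(n)$ trials per block length $n$ so that each super-event $E_n$ has probability at least $1/2$, and finishes with the second Borel--Cantelli lemma for independent events. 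You instead keep the windows random, organise the trials by the stopping-time recursion $M_{l+1}=\tau(M_l,\omega)+n$, and replace independence by the strong Markov property, computing $\mu(E_l\mid\mathcal F_{M_l})=p_i^np_j^n$ exactly and then killing $\bigcap_{l\ge L}E_l^c$ by iterated conditioning; the intersection over $n$ happens only at the very end. What each approach buys: the paper's route needs nothing beyond the classical Borel--Cantelli lemma, but its assertion that the events $E_n(m)$ are independent is stated somewhat informally (the random index $K_n^\omega(m)$ depends on the entire prefix $\omega_1,\dots,\omega_{K_n(m)}$, so the events are not functions of disjoint coordinate blocks in the naive sense, and a conditioning step of the kind you make explicit is really what justifies both the independence and the value $\mu(E_n(m))=p_i^np_j^np$ -- rather, $p_i^np_j^n$); your stopped-filtration argument confronts that dependence directly and is cleaner on this point, at the modest cost of invoking stopping times and the strong Markov property for i.i.d.\ sequences. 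Your bookkeeping at the end (intersecting over $n$, taking $n_l=l$ and choosing $R_l<1/l$ among the infinitely many good scales for pattern length $l$) correctly produces the pair sequence demanded by the definition of $G_{i,j}$.
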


\begin{proof}
For $n \in \mathbb{N}$ let
\[
l(n)=\left\lceil\frac{-\log2}{\log(1-p_{j}^{n}p_{i}^{n})}\right\rceil
\]
and let
\[
\theta = \frac{\max_{i \in \Lambda} \log n_i}{\min_{i \in \Lambda} \log m_i} > 1.
\]
Also, for $n \in \mathbb{N}$ and $m =1, \dots, l(n)+1$, we define numbers $K(n), K_n(m) \in \mathbb{N}$ inductively by
\[
K(1) = 1,
\]
\[
K_n(1) = K(n)
\]
\[
K_n(m+1) =  \theta K_n(m) + n \qquad (m =1, \dots, l(n)),
\]
\[
K(n+1) = K_n(l(n)+1).
\]
These numbers are arranged as follows and will form partitions of the natural numbers:
\[
 \cdots < K(n) = K_n(1) < K_n(2) < \cdots < K_n(l(n)+1) = K(n+1) < \cdots.
\]
For $\omega \in \Omega$, $n \in \mathbb{N}$ and $m\in\{1, \dots, l(n)\}$, let $K_n^\omega(m) = k_2^\omega(R)$ for 
\[
R = \prod_{i=1}^{K_n(m)} n_{\omega_i}^{-1}
\]
and let
\begin{eqnarray*}
E_{n}(m) = \Big\{ \omega = (\omega_1, \omega_2, \dots) \in \Omega & \mid&  \omega_{i'} = j \text{ for all } i'= K_n(m)+1, \dots, K_n(m)+n \text { and } \\
&\,& \omega_{i'} = i \text{ for all } i'=K_n^\omega(m)+1, \dots, K_n^\omega(m)+n \Big\}
\end{eqnarray*}
observing that $n \ll  K_n^\omega(m) - K_n(m)$ for large $n$. Finally, let
\[
E_n = \bigcup_{m=1}^{l(n)} E_n(m).
\]
It follows from these definitions that
\[
\bigcap_{k \in \mathbb{N}} \ \bigcup_{n > k} \ E_{n} \subseteq G_{i,j}
\]
and, moreover, the `events' $\{E_n\}_{n \in \mathbb{N}}$ are independent because they concern properties of $\omega$ at disjoint parts of the sequence.  This can be seen since $K_n^\omega(m)+n \leq \theta K_n(m)+n = K_n(m+1)$.  Also, for a fixed $n$, the events $\{E_n(m)\}_{m=1}^{l(n)}$ are independent. We have
\begin{eqnarray*}
\mu(E_{n})&=&1- \prod_{m=1}^{l(n)} \mu \big( \Omega \setminus E_n(m) \big) \\
&=&1-\big(1-p_{j}^{n}p_{i}^{n}\big)^{l(n)} \\
&\geq&  1-(1-p_{j}^{n}p_{i}^{n})^{-\log2/\log(1-p_{j}^{n}p_{i}^{n})} \\
& =& 1/2.
\end{eqnarray*}
Therefore
\[
\sum_{n\in\N} \mu( E_{n}) \ \geq \ \sum_{n\in\N} \, 1/2 \ = \ \infty
\]
and since the events $E_{n}$ are independent the Borel-Cantelli Lemma implies that
\[
\mu( G_{i,j})  \ \geq \ \mu \left( \bigcap_{k \in \mathbb{N}} \ \bigcup_{n> k} \ E_{n} \right) = 1
\]
as required.
\end{proof}

We can now prove Theorem \ref{lowerboundcarpets}.  Fix $\omega \in G_{i,j}$ and consider a column of the defining pattern for $\mathbb{I}_j$ containing a maximal number of chosen rectangles $B_j$.  If there is more than one such column, then choose one arbitrarily.  This column induces a natural IFS of similarities on the unit interval, consisting of $B_j$ maps with contraction ratios $n_j^{-1}$ and satisfying the OSC.  Let $E_j$ denote the self-similar attractor of this IFS and for $l \in \mathbb{N}$, let $E_j^l$ denote the $l$th level of the construction, i.e., the union of $(B_j)^l$ intervals of length $n_j^{-l}$ corresponding to images of $[0,1]$ under compositions of $l$ maps from the induced column IFS.  Also, consider the IFS $\mathbb{I}_i$ and let $\pi_1(F_{\underline{i}})$ denote the projection onto the first coordinate of the attractor of $\mathbb{I}_i$, which is also a self-similar set satisfying the OSC.  We will now show that $ \pi_1(F_{\underline{i}}) \times E_j$ is a very weak tangent to $F_\omega$.
\\ \\
For a random approximate square $Q$, let $T^Q$ be the uniquely defined affine map given by the composition of a non-negative diagonal matrix and a translation which maps $Q$ to $[0,1] \times [0,1]$.  Let $(R_l, n_l) \in (0,1) \times \mathbb{N}$ be a pair which together with $\omega$ satisfy the definition of $G_{i,j}$ and consider the family of random approximate $R_l$ squares.  Since $\omega_{i'} = j \text{ for all } i'=k_1^\omega(R_l)+1, \dots, k_1^\omega(R_l)+n_l$, by keeping track of the maximising column mentioned above we can choose $Q$ satisfying
\[
T^Q(Q) \ \subseteq \pi_1\Big(F_{\sigma^{k_2^\omega(R)}(\omega)} \Big) \times E_j^{n_l}.
\]
Moreover, by decomposing $E_j^{n_l}$ into its basic intervals of length $n_j^{-l}$, we see that within each corresponding rectangle in $T^Q(Q)$ (which has height $n_j^{-l}$), one finds affinely scaled copies of $F_{\sigma^{k_2^\omega(R)}(\omega)}$.  Since $\omega_{i'} = i \text{ for all } i'=k_2^\omega(R_l)+1, \dots, k_2^\omega(R_l)+n_l $, this implies that $T^Q(Q)$ occupies every basic rectangle at the $n^l$th stage of the construction of $\pi_1(F_{\underline{i}}) \times E_j$.  Since such rectangles have base $m_i^{- n_l}$ and height $n_j^{- n_l}$ this yields
\[
d_\mathcal{H} \Big( T^Q(Q), \, \pi_1(F_{\underline{i}}) \times E_j \Big) \ \leq \  \Big(  m_i^{-2 n_l} +n_j^{-2 n_l}  \Big)^{1/2}.
\]
This is sufficient to show that $\pi_1(F_{\underline{i}}) \times E_j$ is a very weak tangent to $F_\omega$ because we can choose our sequence of maps to be $T^{Q}$ for a sequence of random approximate squares $Q$ satisfying the above inequality, but with $n_l \to \infty$, giving the desired convergence. Moreover, for any random approximate $R$-square $Q$ we have
\[
R^{-1} \lvert x-y\rvert\  \leq \ \lvert T^Q(x) - T^Q(y) \rvert \  \leq \  n_{\max} R^{-1} \lvert x-y \rvert \qquad (x,y \in \mathbb{R}^2),
\]
and so the maps satisfy the conditions required in Proposition \ref{weaktang00}. It follows that
\begin{eqnarray*}
\dim_\text{A} F_\omega & \geq &  \dim_\text{A} \big( \pi_1(F_{\underline{i}}) \times E_j \big) \qquad \text{by Proposition \ref{weaktang00}}\\ 
 & \geq &  \dim_\text{H} \big( \pi_1(F_{\underline{i}}) \times E_j \big)\\ 
& \geq &  \dim_\text{H} \pi_1(F_{\underline{i}})  \ + \ \dim_\text{H} E_j \qquad \text{by \cite[Corollary 7.4]{FractalGeo}}\\ 
& = & \frac{\log A_i}{\log m_i}   \ + \ \frac{\log B_j}{\log n_j}
\end{eqnarray*}
as required.

\subsection{Proof of Theorem \ref{baireassouad}} \label{baireassouadproof}

 Let $s=\sup_{u \in \Omega} \, \dim_\text{{A}} F_u $.  We will show that the set
 \[
 A \ = \ \{ \omega \in \Omega \ : \  \dim_\text{{A}} F_\omega \geq s\}
 \]
 is residual, from which Theorem \ref{baireassouad} follows.
 \\ \\
   First we recall some useful functions. Let $\Psi: \big(\Omega, d \big) \to \big(\mathcal{K}(X), d_\mathcal{H}\big)$ be defined by $\Psi(\omega) = F_\omega$ and observe that it is continuous.
   For $x \in X$ and $R \in (0,1]$ let  $\beta^0_{x,R}: \mathcal{K}(X)  \to \mathcal{P}(X)$ by
 \[
 \beta^0_{x,R}(F) = B^0(x,R) \cap F,
 \]
where $B^0(x,R)$ is the open ball centered at $x$ with radius $R$, and $\mathcal{P}(X)$ is the power set of $X$ (the images need not be compact). Also, for $r \in (0,1]$, let $M_r(F)$ denote the maximum number of \emph{closed} sets in an $r$-packing of $F\subseteq X$, where an $r$-packing of $F$ is a pairwise disjoint collection of closed balls centered in $F$ of radius $r$.
 It was shown in \cite[Lemma 5.2]{Fraser14a} that the map $M_r \circ \beta^0_{x,R} : \mathcal{K}(X) \to \mathbb{R}$ is lower semicontinuous.  It thus  follows from the continuity of $\Psi$, that the function $\Xi : = M_r \circ \beta^0_{x,R} \circ \Psi : \Omega \to \mathbb{R}$ is lower semicontinuous. We have
 \begin{eqnarray*}
 	A 	&=& \Bigg\{\omega \in \Omega \ : \  \text{for all } n \in \mathbb{N},  C, \rho>0, \text{ there exists } x \in X  \text{ and } 0<r<R< \rho, \text{ such that } \\ \\
 	&\,& \qquad \qquad \qquad \quad M_r \Big( B^0\big( x, R \big) \cap F_\omega  \Big)  \ > \ C \, \bigg( \frac{R}{r} \bigg)^{s-1/n} \Bigg\} \\ \\
 	&=&  \bigcap_{n \in \mathbb{N}} \  \bigcap_{C \in \mathbb{N}} \  \bigcap_{\rho \in \mathbb{Q}^+} \ \bigcup_{x \in X} \  \bigcup_{R \in \mathbb{Q} \cap (0,\rho)} \ \bigcup_{r \in \mathbb{Q} \cap (0,R)} \ \Bigg\{ \omega \in \Omega  \ : \  M_r \big( \beta^0_{x,R}(F_\omega )  \big)  \ > \ C \, \bigg( \frac{R}{r} \bigg)^{s-1/n} \Bigg\} \\ \\
 	&=&  \bigcap_{n \in \mathbb{N}} \  \bigcap_{C \in \mathbb{N}} \  \bigcap_{\rho \in \mathbb{Q}^+} \ \bigcup_{x \in X} \  \bigcup_{R \in \mathbb{Q} \cap (0,\rho)} \ \bigcup_{r \in \mathbb{Q} \cap (0,R)} \ \Xi^{-1}\,  \Big( \big(  C \, (R/r)^{s-1/n} , \infty \big) \Big).
 \end{eqnarray*}
 The set $ \Xi^{-1} \,  \Big( \big(  C \, (R/r)^{s-1/n}, \infty \big) \Big)$ is open by the lower semicontinuity of $ \Xi^{-1}$ and therefore $A$ is a $\mathcal{G}_{\delta}$ subset of $\Omega$.
 \\ \\
 To complete the proof that $A$ is residual, it remains to show that $A$ is dense in $\Omega$.  For $n \in \mathbb{N}$ let
 \[
 A_n \ = \ \{ \omega \in \Omega \ : \  \dim_\text{{A}} F_\omega \geq s - 1/n\}.
 \]
 It follows that $A_n$ is $\mathcal{G}_{\delta}$ by the same argument as above, and since
 \[
 A \ = \ \bigcap_{n \in \mathbb{N}} A_n
 \]
 it follows from the Baire Category Theorem that it suffices to show that $A_n$ is dense in $\Omega$ for all $n$.  Let $n \in \mathbb{N}$, $\omega=(\omega_1, \omega_2, \dots) \in \Omega$,  and $\varepsilon>0$.  Let  $u=(u_1, u_2, \dots) \in \Omega$ be such that $\dim_\text{A} F_{u} > s-1/n$, choose $k \in \mathbb{N}$ such that $2^{-k} <\varepsilon$ and let $v =(\omega_1,\dots, \omega_k,u_1, u_2, \dots)$.  It follows that $d(v, \omega) < \varepsilon$ and, furthermore,
 \[
 F_v =\bigcup_{j_1\in \mathcal{I}_{\omega_1}, \dots, j_k \in\mathcal{I}_{\omega_k}} S_{\omega_1, j_1} \circ \dots \circ S_{\omega_k,j_k}(F_u).
 \]
Since, for all $j_1\in \mathcal{I}_{\omega_1}, \dots, j_k \in\mathcal{I}_{\omega_k}$ the map $S_{\omega_1, j_1} \circ \dots \circ S_{\omega_k,j_k}$ is a bi-Lipschitz contraction, it follows from basic properties of the Assouad dimension that $\dim_\text{A} F_{v} \geq \dim_\text{A} F_{u} > s-1/n$ and so $v \in A_n$, proving that $A_n$ is dense.

\subsection{Proof of Theorem \ref{percolationassouad}} \label{percolationassouadproof}

The upper bound is trivial, and we prove the lower bound here.
As we condition on non-extinction, we may assume there exists $x\in F$ and hence also a sequence of nested compact cubes $Q_{k}^{x}$ that each contain $x$, have sidelengths equal to $n^{-k}$ and are such that $x=\cap_{k\in\N}Q_k^x$.
We start by introducing some additional notation. At the $(k+1)$th stage in the construction of $F$ the cube $Q_k^x$ was split into $N=n^d$ compact cubes.
We will index these cubes by $\mathcal{I}=\{1,2,\hdots,N\}$ (ordered lexicographically by their midpoints) and keep track of the tree structure of subcubes by words that give their position in the iteration.
That is for words of length $m$ we write $Q_k^x(w)$, where $w\in\mathcal{I}^m$, to mean the uniquely determined cube at the $(k+m)$th stage of the construction lying inside $Q_k^x$ at position $w$ starting from $Q_k^x$.
We also write $Q_k^x=Q_k^x(\varnothing)$.
Let $p_{\neg e}>0$ be the probability that any cube which has survived up to some point in the construction does not go on to become extinct.  Due to the independence and homogeneity of the construction, this is the same for any surviving cube at any level.  Moreover, it is strictly positive due to our assumption on $p$.
The following lemma is similar in spirit to Lemma \ref{exceptionalLemma}.
\newpage

\begin{lma}\label{borelcantelliperc}
Let $x$ be as above.
Almost surely there exists an increasing sequence of natural numbers $(M_i)_{i=1}^\infty$ such that, for all $i \in \N$,  all cubes
\[
Q_{M_i}^x(w)\text{ where } w \in\{\varnothing\}\cup\bigcup_{a=1}^{i} \mathcal{I}^a
\]
survive and each of the last cubes $\{Q_{M_i}^x(w)\}_{w \in \mathcal{I}^i}$ in this iteration do not become extinct.
\end{lma}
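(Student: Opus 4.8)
The plan is to exploit the self-similar (strong Markov) structure of the construction together with a conditional Borel--Cantelli estimate driven by $p_{\neg e}>0$. Write $N=n^d$ and recall $Np=n^dp>1$. The structural fact I would use repeatedly is that, conditioned on a cube $c$ surviving, the percolation inside $c$ is an independent rescaled copy of the original percolation, and ``$c$ does not become extinct'' is precisely the event that this copy has nonempty limit set. Call a surviving cube $c$ \emph{$i$-good} if every cube of its depth-$i$ subtree survives and all $N^i$ bottom cubes are non-extinct; by self-similarity $P(c\text{ is }i\text{-good}\mid c\text{ survives})=\gamma_i$, a constant in $(0,1]$ assembled from finitely many factors $p$ and $p_{\neg e}$. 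Note that an $i$-good cube is automatically non-extinct, since each of its leaves is.

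A preliminary remark on quantifiers: a deterministically prescribed infinite address survives with probability zero, so one cannot literally condition on ``$x\in F$'' for a point fixed in advance, and the point $x$ promised by ``we may assume there exists $x\in F$'' is necessarily a measurable function of the realisation. Accordingly I would determine $x$ and the scales $(M_i)$ simultaneously by an inductive nesting argument, and then verify that the resulting $x$ is exactly an admissible choice in the sentence above. Set $Q^{(0)}=[0,1]^d$, present by convention and non-extinct on the event we condition on. Given a present, non-extinct cube $Q^{(i-1)}$ at level $M_{i-1}$, I would produce a present, non-extinct $i$-good descendant $Q^{(i)}$ at some level $M_i>M_{i-1}$; the induction then continues, the cubes $Q^{(i)}$ are nested and shrink to a point $x=\bigcap_i Q^{(i)}\in F$, and by construction $Q^{(i)}=Q^x_{M_i}$ is $i$-good, which is the assertion.

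For the inductive step I would, after passing via the strong Markov property to the fresh percolation inside $Q^{(i-1)}$ (conditioned on non-extinction), test \emph{all} surviving cubes at a sequence of well-separated levels rather than follow a single ray. Fix the spacing $L_t=t(i+1)$ and let $Z_{L_t}$ be the number of surviving cubes at level $L_t$; since $Np>1$, conditioned on non-extinction $Z_{L_t}\to\infty$ almost surely. Conditioned on the configuration up to level $L_t$, the depth-$i$ subtrees below distinct surviving level-$L_t$ cubes are independent copies, so the event $G_t$ that \emph{some} surviving cube at level $L_t$ is $i$-good has conditional probability $1-(1-\gamma_i)^{Z_{L_t}}$. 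With $\mathcal H_t=\sigma(\text{labels up to level }L_t)$, the event $G_t$ is $\mathcal H_{t+1}$-measurable because $L_{t+1}\geq L_t+i$, and $P(G_t\mid\mathcal H_t)=1-(1-\gamma_i)^{Z_{L_t}}\to 1$ almost surely on non-extinction. Hence $\sum_t P(G_t\mid\mathcal H_t)=\infty$ almost surely there, and Lévy's conditional Borel--Cantelli lemma forces infinitely many $G_t$ to occur, producing the required $i$-good descendant.

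The main obstacle, and the reason for testing every surviving cube at a level instead of a preselected path, is exactly the measurability/selection issue: choosing a surviving path by any fixed rule (for example leftmost) biases the off-path cubes toward extinction, which would destroy the $i$-good events that demand all siblings present and non-extinct. The level-by-level ``all cubes at once'' formulation sidesteps this entirely, using only the honest conditional independence of disjoint subtrees given a finite amount of revealed information, so the constant $\gamma_i>0$ feeds directly into the Borel--Cantelli estimate without any unfavourable conditioning. Everything else---the explicit value of $\gamma_i$, the verification that the $Q^{(i)}$ genuinely nest down to a single point, and the bookkeeping that $Q^{(i)}=Q^x_{M_i}$---is routine.
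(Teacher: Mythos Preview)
Your approach is correct in spirit and takes a genuinely different route from the paper's. The paper fixes a point $x\in F$ at the outset and then runs a classical Borel--Cantelli argument \emph{along that fixed ray}: for each $m$ it computes the probability $\hat p_m=p^{L_N^m}p_{\neg e}^{N^m-1}$ that, given the cubes on the $x$-path survive, all off-path subcubes of $Q_r^x$ to depth $m$ survive and the bottom layer is non-extinct; it then arranges disjoint blocks of levels so that the resulting events $\mathcal E_m$ are independent with $\mathbb P(\mathcal E_m)\ge 1/2$. You instead build $x$ and $(M_i)$ simultaneously, at each inductive step testing \emph{every} surviving cube at a sequence of levels and using the supercritical growth $Z_{L_t}\to\infty$ to guarantee an $i$-good descendant appears. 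Your route sidesteps exactly the conditioning issue you identify---that a measurably selected infinite ray biases its siblings---at the price of invoking the asymptotics of the underlying branching process; the paper's route is purely combinatorial once one accepts the path conditioning.

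There is, however, one slip in your argument. You claim $G_t\in\mathcal H_{t+1}$ because $L_{t+1}\ge L_t+i$, but $G_t$ includes the requirement that the $N^i$ leaves be \emph{non-extinct}, which is a tail event lying in no $\mathcal H_s$ for finite $s$; L\'evy's conditional Borel--Cantelli lemma needs the events adapted and so does not apply as written. The repair is immediate and in fact simplifies the argument: you do not need infinitely many $G_t$, only one. Your formula $P(G_t\mid\mathcal H_t)=1-(1-\gamma_i)^{Z_{L_t}}$ is correct, and since $G_t\subseteq\{\text{non-extinction}\}$ while $(1-\gamma_i)^{Z_{L_t}}\to 0$ almost surely on non-extinction and $\to 1$ on extinction, dominated convergence gives $P(G_t)\to p_{\neg e}$, hence $P(G_t\mid\text{non-extinction})\to 1$ and $P\bigl(\bigcup_t G_t\,\big|\,\text{non-extinction}\bigr)=1$. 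One occurrence of $G_t$ is all the inductive step requires.
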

\begin{proof}
Let $m, r\in\N$ be given. First we establish the probability of all cubes $Q_r^x(w)$ for $w\in\{\varnothing\}\cup\bigcup_{a=1}^m \mathcal{I}^a$ surviving and not becoming extinct. By the  homogeneity of the construction the probability of those cubes surviving is independent of $r$ and is the number of `(weighted) coin tosses' needed for all cubes to survive. As we are given that at least one path (the one for $x$) survives, the number of `tosses' is
\[
L_N^m \ = \ \sum_{a=1}^{m} ( N^{a}-1 ) \ = \  \frac{N^{m+1}-N}{N-1} - m,
\]
and so the probability of all of the cubes surviving is $p^{L_N^m}$. We also have to take into account the non-extinction criteria. Given that they have survived to the $(r+m)$th level, the probability that all of the cubes $\{Q_{r}^x(w)\}_{w \in \mathcal{I}^m}$ will not become extinct is $p_{\neg e}^{N^m-1}$.  Thus the probability of all cubes $Q_r^x(w)$ for $w\in\{\varnothing\}\cup\bigcup_{a=1}^m \mathcal{I}^a$ surviving and not becoming extinct is $\hat p_{m}=p^{L_N^m}p_{\neg e}^{N^m-1}$.  Now define $l(m+1)=l(m)+k(m)$, where $l(1)=1$ and
\[
k(m) \ = \ m\left\lceil\frac{-\log2}{\log(1-\hat p_{m})}\right\rceil.
\]
Let $\mathcal E_m$ be the event
\begin{multline*}
\mathcal E_m=\left\{\vphantom{\bigcup_{a=1}^m}\text{for at least one of $j\in\{0,m,2m,\hdots,k(m)-m\}$ we have }\right.\\\left.\text{that all $Q_{l(m)+j}^x(v)$ survive and are non-extinct in the limit for $v\in\{\varnothing\}\cup\bigcup_{a=1}^m \mathcal{I}^a$}\right\}.
\end{multline*}
Given that the cubes $Q_k^x$ all survive, it is evident that the behaviour of one $k(m)/m$ block is independent of the next and so
\[
\mathbb P(\mathcal E_m) \ = \ 1-(1-\hat p_{m})^{k(m)/m} \ \geq \ 1/2.
\]
Lemma \ref{borelcantelliperc} now follows immediately by the Borel Cantelli Lemma and the fact that $\mathcal E_m$ are easily seen to be independent.
\end{proof}

Using Lemma \ref{borelcantelliperc} we now show that, almost surely conditioned on non-extinction, $X=[0,1]^d$ is a weak tangent to $F$.  The required lower bound on the dimension of $F$ then follows from Proposition \ref{weaktang00}.  Let $T_i$ be the homothetic similarity that maps the cube $Q_{M_i}^x$ to $X$.
By Lemma \ref{borelcantelliperc} we have that, almost surely conditioned on non-extinction, each of the subcubes $Q_{M_{i}}^x(v)$ for $v\in\mathcal{I}^i$ survive and are non-empty in the limit.
Now $T_i (F)\cap X$ is the union of all blow ups of these subcubes under $T_i$ and, since each blown up subcube contains at least one point and has  diameter $\sqrt{d} \,  n^{-i}$, it follows that
\[
d_{\mathcal H}(T_i (F)\cap X, X) \, \leq  \, \sqrt{d} \,  n^{-i}
\]
and so $d_{\mathcal H}(T_i (F)\cap X, X)\to 0$ as $i\to\infty$ as required.
\\ \\
The optimal projection result now follows as a simple consequence of $F$ being almost surely of full dimension. In particular, for all $k \leq d$ and $\pi \in \Pi_{d,k}$ we have
	\[
	F \subset \pi F \times \pi^\perp,
	\]
	where $\pi^\perp$ is the $(d-k)$-dimensional orthogonal complement of (the $k$ dimensional subspace identified with) $\pi$, and so by basic properties of how Assouad dimension behaves concerning products \cite[Lemma 9.7]{Robinson11} it follows that, for all realisations where $\dim_\text{A} F = d$, we have
	\[
	d \ = \ \dim_\text{A} F \  \leq \  \dim_\text{A} \pi F \, + \, \dim_\text{A} \pi^\perp  \ =  \ \dim_\text{A} \pi F \, + \, d-k
	\]
	which gives $\dim_\text{A} \pi F \geq k$.  The opposite inequality is trivial and since $\dim_\text{A} F = d$ occurs almost surely conditioned on non-extinction, the result follows.

\section{Questions and discussion of results} \label{questionsection}

It will be clear to the diligent reader that our methods could be used with only minor alterations to prove more general results than the ones we chose to state.  This was a conscious choice made in order to clearly display what we believe are the key new phenomena we were able to observe.  In particular, in the random self-similar setting, a simple modification of the `good sets' would yield that for the almost sure lower bound, the maximum over deterministic IFSs can be replaced with the supremum over periodic words.  This only effects things when there are overlaps as Theorem \ref{theo2-1} shows that the two values are the same if the UOSC is assumed.
\begin{ques}
Assuming no separation conditions, is the almost sure Assouad dimension of a 1-variable random self-similar attractor given by the supremum of the Assouad dimensions of attractors corresponding to eventually periodic words?
\end{ques}
For the example in Section \ref{counterexample} we readily obtain that the almost sure Assouad dimension is 1.  Moreover, in the self-affine setting, our methods should enable the analogous results to be proved for random Lalley-Gatzouras carpets or random Bara\'nski carpets.  In these cases the upper bounds would be considerably more technical, but by combining our methods with the techniques in \cite{Fraser14a} the expected results should follow.
\\ \\
Our methods for proving Theorem~\ref{percolationassouad} easily extend to encompass more general percolation models, for example the model considered by Falconer and Jin~\cite[Section 6]{Falconer14}. This model is based on an IFS of similarities $\{S_{i}\}_{i\in \mathcal I}$ satisfying the OSC and a probability vector $\textbf{\emph{p}}$ associated with the power set of $\{S_{i}\}_{i\in \mathcal I}$ where we assume each entry is non-zero.  Each sub-IFS describes a different way to iterate the construction starting from any cylinder and for every surviving $k$th level cylinder, which IFS to apply is chosen randomly and independently with respect to this probability vector, giving rise to a random subset of the initial attractor.  Similar to Theorem~\ref{percolationassouad} we can show that if the probabilities are chosen such that there is a positive probability of non-extinction then, conditioned on this happening, the Assouad dimension is almost surely maximal, i.e it equals the Assouad dimension of the attractor of $\{S_{i}\}_{i\in \mathcal I}$.
\\ \\
This paper has unearthed the following general principle: \emph{for a randomly generated set, the Assouad dimension is generically as big as possible}.  We wonder to what other random settings this principle applies.  Some of the most famous examples of random fractals are certain sets associated to random functions or random processes. For example, the graph, range, and level sets of (fractional) Brownian motion are random fractals with interesting dimension theoretic properties.  The almost sure Hausdorff dimensions of these objects are long known and are, in some sense, intermediate values.
\begin{ques}
What are the almost sure Assouad dimensions of the graph, range and level sets for (fractional) Brownian motion on $[0,1]$?
\end{ques}
A natural direction to try to push Theorems \ref{SSresult} and \ref{lowerboundcarpets} would be to consider more general RIFSs, either by allowing more general classes of map or less control on the overlaps, or a more general random model, for example $V$-variable. One particular instance of this could be to consider more general self-affine carpets, like those considered in \cite{LalleyGatzouras92, Baranski07, Feng05, Fraser12}.  The Assouad dimension of Bedford-McMullen carpets and the extensions considered by Lalley-Gatzouras \cite{LalleyGatzouras92} and \cite{Baranski07} were computed in \cite{Mackay11, Fraser14a}, but extending the calculations to the carpets considered by Feng-Wang \cite{Feng05} or the first author \cite{Fraser12} is currently out of reach due to the lack of a suitable grid like structure.
\\ \\
Another possible way of randomising the IFS construction is to consider one particular deterministic IFS and then randomise some of the defining parameters.  There are many ways of doing this, but perhaps the most well known is the approach pioneered by Falconer in \cite{Falconer88}.  Here a fixed collection of linear contractions on $\mathbb{R}^n$ are turned into a nontrivial IFS by randomly choosing a set of corresponding translation vectors.  The main result of \cite{Falconer88} is a formula for the Hausdorff dimension which holds for almost all translations (assuming a mild technical condition on the norms of the matrices).  It was already asked by the first author what the generic Assouad dimension of a self-affine set in this setting is~\cite[Question 4.3]{Fraser14a}.  We raise it again because, if the general principle in this paper applies here, then the answer would be that the Assouad dimension is almost surely maximal, i.e. the same as the ambient spatial dimension, however, this cannot be the case.  For example, work in $\mathbb{R}$ and take an IFS with two (or more) similarity maps with very small contraction ratios.  Then there will be a positive measure set of translations where the resulting self-similar set satisfies the OSC and thus has Hausdorff and Assouad dimension equal and strictly less than 1.
\\ \\
It is of particular interest that Theorems \ref{SSresult} and \ref{lowerboundcarpets} do not depend on which specific Bernoulli measure we chose.  In particular, as long as the defining probability vector is strictly positive, then the Assouad dimension is almost surely maximal no matter what.  This is perhaps surprising since for all other notions of dimension the almost sure value depends crucially on the exact measure used.  This leads to the following natural question.
\begin{ques}
For which (Borel probability) measures $\mu$ on $\Omega$ do Theorems \ref{SSresult} and \ref{lowerboundcarpets} remain true?
\end{ques}
We feel it is likely that this will hold for much more general measures than Bernoulli measures.  The argument should go through with only minor modifications for Gibbs measures for H\"older potentials, for example, but we expect a much more general statement is true.
\\ \\
Concerning fractal percolation, we wonder about various types of intersection problems. For example, it is currently a topic of interest to study slices, that is intersections with translations of lower dimensional subspaces, see \cite{Rams14}.
\begin{ques}
Can one say anything about the almost sure (conditioned on non-extinction) Assouad dimension of (almost all of) the slices of fractal percolation?
\end{ques}

\section*{Acknowledgements}

This work began whilst the three authors were attending the \emph{Conference in honour of Kenneth Falconer's 60th birthday} in May 2014 hosted by INRIA (Paris).
We thank both the Universities of St Andrews and Warwick for their hospitality when hosting research visits related to this project. We also thank Xiong Jin for making helpful comments on the paper.\\

JMF was financially supported by the EPSRC grant EP/J013560/1 whilst employed at the University of Warwick. JJM was partially supported by the NNSF of China (no. 11201152), the Fund for the Doctoral Program of Higher Education of China (no. 20120076120001) and SRF for ROCS, SEM (no. 01207427).  ST was financially supported by the EPSRC Doctoral Training Grant AMC3-DTG012.


\end{document}